\theoremstyle{plain}
\newtheorem{thm}{Theorem}[section]
\newtheorem{prop}[thm]{Proposition}
\newtheorem{lem}[thm]{Lemma}
\newtheorem{cor}[thm]{Corollary}
\newtheorem*{GenNonvanCon}{Generalised Nonvanishing Conjecture}
\newtheorem{thmA}{Theorem}
\newtheorem{corA}[thmA]{Corollary}
\theoremstyle{definition}
\newtheorem{dfn}[thm]{Definition}
\newtheorem{rem}[thm]{Remark}
\newcommand{\Z}{\mathbb{Z}}
\newcommand{\N}{\mathbb{N}}
\newcommand{\Q}{\mathbb{Q}}
\newcommand{\R}{\mathbb{R}}
\newcommand{\C}{\mathbb{C}}
\newcommand{\OO}{\mathcal{O}}
\newcommand{\Rad}[0]{\operatorname{Rad}}
\newcommand{\GL}[0]{\operatorname{GL}}
\newcommand{\Unv}[1]{{#1}^{\rm{univ}}}
\newcommand{\Aut}[0]{\operatorname{Aut}}
\newcommand{\Image}[0]{\operatorname{Im}}
\newcommand{\cal}[1]{\mathcal{#1}}
\DeclareMathOperator{\Pic}{Pic}
\DeclareMathOperator{\rk}{rk}
\DeclareMathOperator{\HHom}{\mathcal{H}\mathit{om}}
\DeclareMathOperator{\End}{\mathcal{E}\mathit{nd}}
\begin{document}
	
	\title[Nonvanishing problem for nef anticanonical bundle]{The Nonvanishing problem for varieties with nef anticanonical bundle}
	
	\author[V.\ Lazi\'c]{Vladimir Lazi\'c}
	\address{Fachrichtung Mathematik, Campus, Geb\"aude E2.4, Universit\"at des Saarlandes, 66123 Saarbr\"ucken, Germany}
	\email{lazic@math.uni-sb.de}
	
	\author[S.\ Matsumura]{Shin-ichi Matsumura}
	\address{Mathematical Institute, Tohoku University, 6-3, Aramaki Aza-Aoba, Aoba-ku, Sendai 980-8578, Japan}
	\email{mshinichi-math@tohoku.ac.jp}

	\author[Th.\ Peternell]{Thomas Peternell}
	\address{Mathematisches Institut, Universit\"at Bayreuth, 95440 Bayreuth, Germany}
	\email{thomas.peternell@uni-bayreuth.de}
	
	\author[N.\ Tsakanikas]{Nikolaos Tsakanikas}
	\address{Institut de Math\'ematiques (CAG), \'Ecole Polytechnique F\'ed\'erale de Lausanne (EPFL), 1015 Lausanne, Switzerland}
	\email{nikolaos.tsakanikas@epfl.ch}
	
	\author[Z.\ Xie]{Zhixin Xie}
	\address{Institut \'Elie Cartan de Lorraine, Universit\'e de Lorraine, F-54506 Nancy, France}
	\email{zhixin.xie@univ-lorraine.fr}

	\thanks{Lazi\'c gratefully acknowledges support by the Deutsche Forschungsgemeinschaft (DFG, German Research
Foundation) -- Project-ID 286237555 -- TRR 195. We thank H.\ Liu, F.\ Meng and the referee for valuable comments. Matsumura is partially supported by Grant-in-Aid for Scientific Research (B) $\sharp$21H00976 and Fostering Joint International Research (A) $\sharp$19KK0342 from JSPS. 
	\newline
		\indent 2020 \emph{Mathematics Subject Classification}: 14E30.\newline
		\indent \emph{Keywords}: Generalised Nonvanishing conjecture, nef anticanonical sheaf, Minimal Model Program.
	}
	
	\begin{abstract}
We prove that if $(X,\Delta)$ is a threefold pair with mild singularities such that ${-}(K_X+\Delta)$ is nef, then the numerical class of ${-}(K_X+\Delta)$ is effective.
		\end{abstract}

	\maketitle
	
	\begingroup
		\hypersetup{linkcolor=black}
		\setcounter{tocdepth}{1}
		\tableofcontents
	\endgroup
	
\section{Introduction}
	
One of the pillars of the Minimal Model Program in dimension $3$ over the field of complex numbers is the solution of the Abundance Conjecture in the 1990s. A fundamental part of this result is the Nonvanishing theorem: given a log canonical $ 3 $-fold pair $(X,\Delta)$, if $K_X+\Delta$ is nef, then some multiple of $K_X+\Delta$ has sections. This was first proved by Miyaoka for terminal $3$-folds in \cite{Miy87,Miy88a} and later completed in \cite{KMM94}.
	
Our first main result is that an analogous statement holds at least numerically for $3$-fold pairs $(X,\Delta)$ with ${-}(K_X+\Delta)$ nef.
	
\begin{thmA}\label{thm:mainthm_dim=3}
	Let $(X,\Delta)$ be a projective log canonical pair of dimension $3$ such that ${-}(K_X+\Delta)$ is nef. Assume that $X$ is $\Q$-factorial or that it has rational singularities.\footnote{This holds, for instance, when $(X,\Delta)$ is a klt or, more generally, dlt pair.} 
	Then the following hold:
	\begin{enumerate}[\normalfont (a)]
		\item The numerical class of $ {-}(K_X+\Delta) $ is effective.
	
		\item If $X$ is uniruled, then the numerical class of any nef Cartier divisor on $X$ is effective. 
	\end{enumerate}
\end{thmA}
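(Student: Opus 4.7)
The first observation is that the non-uniruled case is essentially trivial. If $X$ is not uniruled, then $K_X$ is pseudo-effective by the Boucksom--Demailly--P\u{a}un--Peternell characterisation, while $-K_X = -(K_X+\Delta) + \Delta$ is a sum of a nef and an effective $\R$-divisor and hence is itself pseudo-effective. Thus $K_X \equiv 0$, which combined with $\Delta \geq 0$ and the nefness of $-(K_X+\Delta)$ forces $\Delta \equiv 0$, so $-(K_X+\Delta) \equiv 0$ is trivially numerically effective; part (b) is vacuous in this case.

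Both parts therefore have substantial content only in the uniruled case, which I would attack by a single MMP-based strategy. After reducing the rational-singularities setting to the $\Q$-factorial one by a small $\Q$-factorialisation, I would run a $K_X$-MMP with scaling: of an ample divisor in part (a), and of the given nef Cartier divisor $L$ in part (b). Since $X$ is uniruled the MMP terminates with a Mori fiber space $\varphi\colon Y \to Z$ with $\dim Z \in \{0,1,2\}$. A key preliminary is to verify, using the negativity lemma and the anti-nefness of $K_X+\Delta$, that the MMP preserves the nefness of $-(K_Y+\Delta_Y)$ (and, respectively, of $L_Y$) at each step. Numerical effectivity descends from $Y$ back to $X$ along divisorial contractions and flips by pushforward, so it suffices to produce an effective numerical representative on $Y$.

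The analysis of $\varphi\colon Y \to Z$ then proceeds by $\dim Z$. When $\dim Z=0$ the variety $Y$ is of Fano type, and any nef $\R$-divisor on it is numerically effective by the basepoint-free theorem applied after a small ample perturbation. When $\dim Z=1$, one decomposes the divisor of interest into horizontal and vertical parts: the horizontal part is handled by relative nonvanishing on the Fano-type generic fiber, and the vertical part reduces, via $\varphi_*$, to nonvanishing on the base curve, which is immediate from Riemann--Roch.

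The expected main obstacle is the case $\dim Z = 2$. Here $Z$ is a normal surface, and the canonical bundle formula for $\varphi$ produces a boundary $\Delta_Z$ with $-(K_Z + \Delta_Z)$ pseudo-effective, reducing the surface problem to a two-dimensional instance of the same nonvanishing, which should follow from classical surface theory once uniruledness or rationality of $Z$ is established. The delicate task is the gluing: to lift effective representatives on $Z$ and on the generic fiber to a single effective divisor on $Y$ numerically equivalent to $-(K_Y+\Delta_Y)$ or to $L_Y$. I expect this step to require a relative vanishing theorem for $\varphi$, control of the canonical-bundle-formula discriminant, and careful handling of non-klt loci of $(Y,\Delta_Y)$ that meet the fibers of $\varphi$ non-transversally.
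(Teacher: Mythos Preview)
Your opening reduction in the non-uniruled case is correct and matches the paper's Lemma~\ref{lem:nefanticanimpliesuniruled}.

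The uniruled strategy, however, breaks at its load-bearing step: the claim that a $K_X$-MMP preserves the nefness of $-(K_Y+\Delta_Y)$ and of $L_Y$ is false for flips. If $R$ is a $K_X$-negative flipping ray with $(K_X+\Delta)\cdot R<0$ --- which nefness of $-(K_X+\Delta)$ permits, since it only forces $(K_X+\Delta)\cdot R\le 0$ --- then the flipped curve $C^+$ satisfies $(K_{X^+}+\Delta^+)\cdot C^+>0$, so $-(K_{X^+}+\Delta^+)$ is not nef. The same obstruction hits $L$: in an MMP with scaling of $L$ each contracted ray has $L\cdot R>0$ whenever the nef threshold is positive, and after a flip $L^+$ becomes negative on the flipped curve. (There is also a setup problem: scaling by a merely nef $L$ needs $K_X+\lambda L$ nef for some $\lambda$, which is not given.) The descent of num-effectivity from $Y$ back to $X$ has the same defect: on those rays the map is $(K_X+\Delta)$-nonpositive, hence $-(K_X+\Delta)$-\emph{nonnegative}, the wrong sign for Lemma~\ref{lem:MMPnumeff}. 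So even if you produced an effective representative on the Mori fibre space, you could not pull it back.

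The paper proceeds along an entirely different axis and never runs toward a Mori fibre space on $X$. It first uses the MMP for \emph{generalised} pairs (Section~\ref{sec:gpairs}), where the nef part is carried on a higher model so its nefness is preserved by construction, to reduce to the case $\Delta=0$ with $X$ terminal and $-K_X$ nef (Theorem~\ref{thm:alternative3}, Corollary~\ref{cor:non-canonical}). It then argues by the value of $\nu(X,-K_X)$: the case $\nu=1$ combines a Nonvanishing criterion built from the Hard Lefschetz theorem for pseudoeffective line bundles with Ou's bound on subsheaves of $(\Omega_X^{[1]})^{\boxtimes m}$ (Theorems~\ref{thm:Ou} and~\ref{thm:criterion}); the case $\nu=2$ uses Hirzebruch--Riemann--Roch, Kawamata--Viehweg vanishing, and the pseudoeffectivity of $c_2$ (Theorems~\ref{thm:c2psef} and~\ref{thm:nu2_var_RCchi}); and the passage from rationally connected to uniruled goes through the structure theorem of \cite{MW21} via Theorems~\ref{thm:reduction} and~\ref{thm:nu2_pair_uniruled}. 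None of these ingredients appear in your outline, and without them the $\dim Z=2$ ``gluing'' you flag as the main obstacle has no available substitute.
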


Together with the Nonvanishing theorem for minimal $3$-folds recalled above, we have:

\begin{corA}\label{cor:cormain}
	Let $(X,\Delta)$ be a projective, $\Q$-factorial, log canonical pair of dimension $3$, let $\varepsilon\in\{{-}1,1\}$, and assume that  $\varepsilon(K_X+\Delta)$ is nef. Then the numerical class of $ \varepsilon(K_X+\Delta) $ is effective.
\end{corA}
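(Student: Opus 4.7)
The corollary reduces to a case split on the sign $\varepsilon$, and in each case the conclusion is immediate from either Theorem~\ref{thm:mainthm_dim=3} or the classical Nonvanishing theorem for minimal log canonical $3$-folds recalled at the start of the introduction.

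When $\varepsilon=-1$, the hypothesis is precisely that $-(K_X+\Delta)$ is nef on a projective, $\Q$-factorial, log canonical $3$-fold pair, which is exactly the setup of Theorem~\ref{thm:mainthm_dim=3}(a). Applying that theorem yields the numerical effectivity of $-(K_X+\Delta)=\varepsilon(K_X+\Delta)$, as required. Note that $\Q$-factoriality is assumed here, so the ``rational singularities'' alternative in the hypothesis of Theorem~\ref{thm:mainthm_dim=3} does not need to be invoked.

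When $\varepsilon=+1$, so that $K_X+\Delta$ is nef, I would invoke the Nonvanishing theorem of Miyaoka~\cite{Miy87,Miy88a} and its completion in \cite{KMM94}, which supplies a positive integer $m$ with $H^0\bigl(X,m(K_X+\Delta)\bigr)\neq 0$. Hence $m(K_X+\Delta)$ is $\Q$-linearly, and in particular numerically, equivalent to an effective divisor; dividing by $m$ shows that the numerical class of $K_X+\Delta$ itself is effective.

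The only point that deserves a moment of bookkeeping is that the two inputs produce conclusions of slightly different strengths: Theorem~\ref{thm:mainthm_dim=3} outputs an effective \emph{numerical} class, while the classical Nonvanishing theorem produces an honest effective $\Q$-divisor in the $\Q$-linear equivalence class. Since $\Q$-linear equivalence refines numerical equivalence, both outputs deliver the sought conclusion. There is no genuine obstacle here; the entire substance of the corollary lies in Theorem~\ref{thm:mainthm_dim=3}(a).
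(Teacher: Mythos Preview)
Your proposal is correct and matches the paper's own argument exactly: the corollary is stated immediately after Theorem~\ref{thm:mainthm_dim=3} with the phrase ``Together with the Nonvanishing theorem for minimal $3$-folds recalled above,'' and no further proof is given. Your observation that the $\varepsilon=+1$ case actually yields the stronger conclusion $\kappa(X,K_X+\Delta)\geq 0$ is also made in the paper in the paragraph following the corollary.
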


When $\varepsilon=1$ in Corollary \ref{cor:cormain}, we actually know that more is true, namely that $\kappa(X,K_X+\Delta)\geq0$. We expect that the same holds for $\varepsilon={-}1$, but this seems to be a very subtle problem. We will return to this question in future work. 

Varieties with mild singularities and nef anticanonical class interpolate between Fano varieties and K-trivial varieties, i.e.\ varieties with numerically trivial canonical class. In those two cases the conclusion of Theorem \ref{thm:mainthm_dim=3}(a) is obvious, whereas the conclusion of Theorem \ref{thm:mainthm_dim=3}(b) is known or is expected to hold. However, not every property of nef divisors on Fano or K-trivial varieties extends to varieties with nef anticanonical class. For instance, the semiampleness of every nef divisor on Fano varieties is known \cite{BCHM} and is expected to hold modulo numerical equivalence in the K-trivial case \cite{LOP16a,LP20a}, while we know that it fails otherwise, for example, if we consider $\mathbb P^2$ blown up at $9$ sufficiently general points. Theorem \ref{thm:mainthm_dim=3} is therefore surprising as it suggests that the birational geometry of varieties with nef \emph{anticanonical} divisor is to a certain extent similar to that of varieties with nef \emph{canonical} divisor.

In many results in this paper we work with pairs $(X,\Delta)$, where $\Delta$ is a $\Q$-divisor. However, most of them hold when $\Delta$ is an $\R$-divisor by Remark \ref{rem:R-divisors}. In particular, Theorem \ref{thm:mainthm_dim=3} holds when $\Delta$ is an $\R$-divisor.

The expectation that Theorem \ref{thm:mainthm_dim=3} might be true formed only very recently, through the study of the following conjecture \cite{LP20a,HanLiu20}.

\begin{GenNonvanCon}
	Let $ (X,\Delta) $ be a projective, $\Q$-factorial, log canonical pair and let $L$ be a nef $\Q$-divisor on $X$ such that $ K_X+\Delta+L $ is pseudoeffective. Then the numerical class of $ K_X+\Delta+L $ is effective.
\end{GenNonvanCon}

This conjecture was introduced for pseudoeffective klt pairs in \cite{LP20a}, where it was also proved in almost all cases for threefolds. In \cite{HanLiu20} the conjecture was formulated in the language of generalised pairs and it was proved in dimension $2$, which implies the analogue of Theorem \ref{thm:mainthm_dim=3} for surfaces, see Theorem \ref{thm:GenNonvanSurfaces}. Those results and the circle of ideas surrounding the Generalised Nonvanishing Conjecture were some of the main inspirations for this paper.

Indeed, if $L$ is a nef Cartier divisor on a projective log canonical pair $(X,\Delta)$ such that ${-}(K_X+\Delta)$ is nef, then one may write $L=K_X+\Delta+M$ for the nef $\Q$-divisor $M:=L-(K_X+\Delta)$. In other words, $L$ is the generalised canonical divisor associated with the generalised pair $(X,\Delta+M)$. Even though this remark is tautological, it provides inspiration and the guiding logic for most of the proofs in this work.\label{page}

In this paper we confirm the Generalised Nonvanishing Conjecture in dimension $ 3 $ in many cases, building on the earlier works \cite{HanLiu20,LP20a}. We stress that Theorem \ref{thm:mainthm_dim=3} is indispensable for the proof of our next result.

\begin{thmA}\label{thm:GNCdim3}
	Let $ (X,\Delta) $ be a projective, $\Q$-factorial, log canonical pair of dimension $3$ and let $L$ be a nef $\Q$-divisor on $X$. Assume that $K_X+\Delta+L$ is pseudoeffective. Then the following hold.
	\begin{enumerate}[\normalfont (a)]
	\item If the numerical class of $L$ is effective, then the numerical class of $ K_X+\Delta+L $ is effective.
	
	\item If $\nu(X,K_X+\Delta+L) \in \{ 0,3 \}$, then the numerical class of $ K_X+\Delta+L $ is effective.\footnote{Here, $\nu$ denotes the numerical dimension, see Section \ref{sec:prelim}.}
	
	\item If $X$ is not uniruled, then the numerical class of $ K_X+\Delta+L $ is effective if $\nu(X,K_X+\Delta)>0$.
	
	\item If $X$ is uniruled, then the numerical class of $ K_X+\Delta+L $ is effective if $\nu(X,K_X+\Delta+L)\neq2$.
	\end{enumerate}		
\end{thmA}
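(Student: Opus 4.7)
The plan is to address the four parts by combining general facts on pseudoeffective divisors, the classical nonvanishing theorem for lc threefolds, the surface case of the Generalised Nonvanishing Conjecture (Theorem \ref{thm:GenNonvanSurfaces}), and crucially Theorem \ref{thm:mainthm_dim=3}. I would begin with part (b), which follows from general theory of the numerical dimension: when $\nu(X,K_X+\Delta+L)=0$, a pseudoeffective divisor of numerical dimension zero is numerically equivalent to its negative part $N_\sigma$ (Nakayama), which is effective, while $\nu=3$ means the divisor is big, hence numerically equivalent to an effective one.

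For part (a), given $L\equiv D$ with $D$ effective, the idea is to pick a small rational $\epsilon>0$ so that $(X,\Delta+\epsilon D)$ is lc, and to view $K_X+\Delta+L$ as the generalised log canonical divisor of the generalised lc pair with boundary $\Delta+\epsilon D$ and nef part $(1-\epsilon)L$. Running a $(K_X+\Delta+L)$-MMP with scaling in the generalised pair setting, in the spirit of \cite{HanLiu20,LP20a}, one reduces to a minimal model, where the classical nonvanishing theorem for lc threefolds applied to a suitable lc boundary that absorbs the nef part using $D$ yields an effective numerical representative.

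For part (c), the non-uniruledness of $X$ forces $K_X$, and hence $K_X+\Delta$, to be pseudoeffective by Boucksom-Demailly-Paun-Peternell. Since $\nu(K_X+\Delta)>0$, nonvanishing for lc threefolds produces an effective $E\equiv K_X+\Delta$ of positive numerical dimension, and abundance for lc threefolds gives an Iitaka-type fibration $f\colon X\dashrightarrow Y$ with $\dim Y\geq 1$. Restricting $L$ to a general fiber, which is a K-trivial lc surface or a curve, and applying the surface case (Theorem \ref{thm:GenNonvanSurfaces}), one obtains effectivity of $L$ on fibers; a positivity argument over the base then promotes this to an effective numerical representative of $L$ itself, so $E+L\equiv K_X+\Delta+L$ is numerically effective. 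Part (d) is where Theorem \ref{thm:mainthm_dim=3}(b) is indispensable: by part (b) the only nontrivial case is $\nu(X,K_X+\Delta+L)=1$. One runs a $(K_X+\Delta+L)$-MMP as a generalised pair while preserving uniruledness; if it terminates with a Mori fibration onto a positive-dimensional base, one reduces to lower-dimensional generalised nonvanishing, while otherwise Theorem \ref{thm:mainthm_dim=3}(b), applied to the uniruled output, supplies an effective numerical representative of the nef part $L'$, after which part (a) closes the argument.

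I expect the main obstacle to be part (d), specifically the case $\nu=1$: one must ensure termination of the generalised pair MMP in dimension three, verify that uniruledness and log canonicity are preserved under each step, and that the reduction at the output really matches the numerical dimension hypothesis imposed on $X$. A further subtlety in (a) lies in choosing the scaling for the MMP and the associated perturbation so that classical nonvanishing for lc threefolds can be applied cleanly at the minimal model, rather than circularly requiring generalised nonvanishing in dimension three.
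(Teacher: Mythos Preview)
Your treatment of (b) matches the paper. Parts (a), (c), and (d), however, diverge from the paper's argument, and (c) and (d) contain genuine gaps.

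For (c), the step ``a positivity argument over the base then promotes this to an effective numerical representative of $L$ itself'' is not an argument: knowing that $L|_F$ is num-effective on general fibres of an Iitaka-type fibration does not, by any known mechanism, imply that $L$ is num-effective on $X$. This lifting problem is essentially the content of the Generalised Nonvanishing Conjecture itself, so your reduction is circular. The paper proceeds differently: it perturbs to the klt pair $(X,\Delta-\varepsilon\lfloor\Delta\rfloor)$, shows via Lemma~\ref{lem:numdim} that for some rational $\varepsilon\in(0,1)$ one has $\nu(X,K_X+\Delta-\varepsilon\lfloor\Delta\rfloor)>0$, and then invokes \cite[Corollary~D]{LP20a} directly, with no fibre argument.

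For (d), your plan to apply Theorem~\ref{thm:mainthm_dim=3}(b) at the output of a $(K_X+\Delta+L)$-MMP fails: that theorem requires $-(K_Y+\Delta_Y)$ to be nef, which is not what one obtains at a minimal model of the generalised pair $(X,\Delta+L)$. The paper's route is more delicate. Using the pseudoeffective-threshold result (Theorem~\ref{thm:GNV_psef_threshold_lc}), it first reduces to the situation where $K_X+\lambda L$ is num-effective for some $\lambda\in(0,1)$ and $\nu(X,K_X+L)=1$. It then runs an \emph{$L$-trivial $K_X$-MMP} (Proposition~\ref{prop:MMP}), not a $(K_X+\Delta+L)$-MMP, to reach $Y$ with $K_Y+mL_Y$ nef. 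Writing $K_Y+\lambda L_Y\equiv G\geq 0$, either $G\neq 0$ and \cite[Theorem~6.1]{LP18a} applies, or $G=0$, which forces $-K_Y\equiv\lambda L_Y$ to be nef, and only then does Theorem~\ref{thm:mainthm_dim=3} enter.

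For (a) the paper also avoids running to a minimal model and ``absorbing the nef part using $D$''; that absorption is problematic since $(Y,\Delta_Y+D_Y)$ need not be log canonical for $D_Y\equiv L_Y$. Instead the paper splits into the non-uniruled case, where $\kappa(X,K_X+\Delta)\geq 0$ by classical threefold nonvanishing, and the uniruled case, where the same threshold argument as in (d) yields $K_X+\lambda L$ num-effective; then $K_X+\Delta+L=(K_X+\lambda L)+(1-\lambda)L+\Delta$ is num-effective using the hypothesis on $L$.
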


We also obtain several results which go beyond dimension $3$. First, in dimension $4$ we have:

\begin{thmA}\label{thm:mainthm_dim=4}
	Let $X$ be a projective variety with canonical singularities of dimension $4$ such that ${-}K_X$ is nef. If
	\begin{enumerate}[\normalfont (a)]
	\item $\nu(X,{-}K_X)\neq2$, or
	\item $\nu(X,{-}K_X)=2$ and $X$ is not rationally connected, 
	\end{enumerate}
	then the numerical class of $ {-}K_X $ is effective.
\end{thmA}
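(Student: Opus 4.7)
The plan is a case analysis on the numerical dimension $\nu := \nu(X, -K_X) \in \{0, 1, 2, 3, 4\}$, together with a preliminary reduction to the uniruled case. If $X$ is not uniruled then $K_X$ is pseudoeffective, whence $K_X \cdot H^3 \ge 0$ for an ample divisor $H$; combined with $-K_X$ nef this yields $-K_X \cdot H^3 = 0$, and since $-K_X$ is nef a standard consequence of the Hodge-index theorem forces $-K_X \equiv 0$. The extremes of $\nu$ are then immediate: $\nu = 0$ gives $-K_X \equiv 0$ directly, while $\nu = 4$ means $-K_X$ is nef and big, so the base-point-free theorem applied to the klt pair $(X, 0)$ yields semi-ampleness, in particular effectivity.

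So we may assume $X$ is uniruled with $\nu \in \{1, 2, 3\}$, and the plan is to exploit the MRC fibration. After resolving singularities, consider $\phi: \widetilde X \dashrightarrow Y$; by Graber--Harris--Starr the base $Y$ is non-uniruled, and by theorems of Zhang and Cao--H\"oring one has $K_Y \equiv 0$, so $\dim Y \in \{0, 1, 2, 3\}$. When $X$ is not rationally connected, $\dim Y \ge 1$: general fibres $F$ are rationally connected of dimension $\le 3$ with $-K_F$ numerically effective (by Theorem \ref{thm:mainthm_dim=3} when $\dim F = 3$, and trivially otherwise), while the trivial canonical class on $Y$ together with the canonical bundle formula should allow one to assemble these fibrewise effectivities into effectivity of the numerical class of $-K_X$. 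In the sub-case $\dim Y = 3$ this reduction invokes Theorem \ref{thm:mainthm_dim=3} directly on $Y$ with a boundary divisor coming from the moduli/discriminant part of the canonical bundle formula.

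For $X$ rationally connected, $\dim Y = 0$ and only $\nu \in \{1, 3\}$ remain since $\nu = 2$ is excluded by hypothesis. The natural plan here is to run a $K_X$-MMP, which terminates with a Mori fibre space $X' \to Z$ of dimension $\le 3$; an application of Theorem \ref{thm:mainthm_dim=3} on $Z$ (or analysis on the generic fibre combined with the Fano-type situation on $X'$) should yield effectivity that pulls back to $X$ via the negativity of the birational contractions. The main obstacle lies precisely here: for $\nu = 3$ the variety is close to, but not quite, Fano, and one has to carefully control the nef reduction map; for $\nu = 1$ compatibility of the Mori fibration with the nef reduction is required. The very exclusion of the rationally connected $\nu = 2$ case signals that descent of effectivity along Mori fibrations and its ascent under divisorial contractions are both delicate, and the two remaining sub-cases will likely require the finest combination of Theorem \ref{thm:mainthm_dim=3} with specific structure theorems on fourfolds with nef anticanonical class.
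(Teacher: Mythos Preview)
Your case division and the treatment of $\nu \in \{0,4\}$ are fine, and for the non-rationally-connected cases your instinct to use the MRC fibration is correct. However, the paper does not use the ordinary MRC fibration together with the canonical bundle formula; it relies on the much stronger fact (Theorem~\ref{thm:MW}) that after a quasi-\'etale cover the MRC fibration becomes a \emph{locally constant} fibration over a $K$-trivial base. The assembly step is then Theorem~\ref{thm:reduction}, which turns fibrewise effectivity into global num-effectivity via representation-theoretic control of the direct image (Proposition~\ref{m-gl_rep} and Proposition~\ref{m-prop}). A bare canonical-bundle-formula argument would not suffice: you need the monodromy to act through a virtually abelian group, which is what the locally constant structure together with $K_Z\equiv 0$ provides. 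With this machinery in hand, $\nu=3$ non-RC is handled because the fibre has dimension at most $3$ and ${-}K_F$ is \emph{big} (Theorem~\ref{thm:nu2_pair_uniruled}), and $\nu=2$ non-RC uses Theorem~\ref{thm:mainthm_dim=3} on the fibre.

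The genuine gap is your treatment of the rationally connected cases $\nu\in\{1,3\}$. Running a $K_X$-MMP is the wrong direction: nefness of ${-}K_X$ is \emph{not} preserved under a $K_X$-negative contraction or flip, so on the resulting Mori fibre space $X'\to Z$ you no longer know that ${-}K_{X'}$ is nef, and there is nothing to which Theorem~\ref{thm:mainthm_dim=3} applies. Moreover, even if you obtained something on $Z$, pulling num-effectivity of ${-}K_X$ back along a $K_X$-nonpositive map goes the wrong way for Lemma~\ref{lem:MMPnumeff}. The paper's actual arguments are entirely different. For $\nu=1$ (rationally connected or not) it invokes Theorem~\ref{thm:mainthm_numdim=1partB}, whose engine is the Nonvanishing criterion of Theorem~\ref{thm:criterion} combined with $\chi(X,\OO_X)=1$ for rationally connected $X$. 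For $\nu=3$ rationally connected, the key input you are missing is Ou's pseudoeffectivity of $c_2$ (Theorem~\ref{thm:c2psef}): the paper computes $\chi\big(X,\OO_X(mL)\big)$ for $L=-K_X$ via Hirzebruch--Riemann--Roch (Proposition~\ref{pro:nu3}), uses $c_2\geq 0$ and Kawamata--Viehweg vanishing to force $h^0$ to be positive for $mL$ or for $K_X+\ell L$, either of which yields $\kappa(X,{-}K_X)\geq 0$. No MMP is run in either case.
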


When the numerical dimension of the relevant divisor is $1$ and the Euler-Poincar\'e characteristic of the structure sheaf is non-zero (which holds, for instance, when the underlying variety is rationally connected), we obtain the following result valid in all dimensions.

\begin{thmA}\label{thm:mainthm_numdim=1partA}
	Let $(X,\Delta)$ be a projective log canonical pair with rational singularities such that ${-}(K_X+\Delta)$ is nef and $\chi(X,\OO_X)\neq0$. If $L$ is a nef Cartier divisor on $X$ of numerical dimension $1$, then the numerical class of $L$ is effective.
\end{thmA}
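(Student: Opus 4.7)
The target is to produce an effective $\Q$-divisor numerically equivalent to $L$, which reduces to showing $h^0(X,\OO_X(mL))>0$ for some $m\geq 1$ (any section $E\sim mL$ then gives $\tfrac{1}{m}E\equiv L$). I would begin by passing to a log resolution $\pi\colon\tilde X\to X$: since $X$ has rational singularities, the projection formula and Leray spectral sequence give $H^j(\tilde X,m\pi^*L)\cong H^j(X,mL)$ for every $j,m\geq 0$ and $\chi(\tilde X,\OO_{\tilde X})=\chi(X,\OO_X)\neq 0$. One thus works on the smooth $\tilde X$ with the nef Cartier divisor $\tilde L:=\pi^*L$, still of numerical dimension $1$.

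Next, I would compute the Hilbert--Euler polynomial. Hirzebruch--Riemann--Roch on $\tilde X$, combined with the numerical vanishing of $\tilde L^k$ for $k\geq 2$ (a consequence of $\nu(\tilde L)=1$), shows that
\[
P(m):=\chi\bigl(X,\OO_X(mL)\bigr)=c_0+c_1m
\]
is a polynomial of degree at most $1$ in $m$, with $c_0=\chi(X,\OO_X)\neq 0$. Moreover, $H^n(X,\OO_X(mL))=0$ for every $m\geq 1$: by Serre duality (valid as rational singularities imply Cohen--Macaulay), a nonzero section would produce an effective divisor $E$ with $K_X-mL\sim E$; intersecting with $H^{n-1}$ for $H$ ample then forces
\[
mL\cdot H^{n-1}\;=\;K_X\cdot H^{n-1}-E\cdot H^{n-1}\;\leq\;K_X\cdot H^{n-1}\;\leq\;0,
\]
using the nefness of $-(K_X+\Delta)$ and effectivity of $\Delta$, contradicting $L\cdot H^{n-1}>0$.

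The decisive step is to control the intermediate cohomology so as to extract $h^0(X,mL)>0$ from $P(m)$. Writing $mL=K_X+\Delta+\bigl(mL-(K_X+\Delta)\bigr)$, with the bracketed divisor nef as a sum of nef classes, one applies the Kawamata--Viehweg-type vanishing for log canonical pairs of Ambro and Fujino: $H^i(X,\OO_X(mL))=0$ for $0<i<n$ and $m\gg 0$ as soon as $mL-(K_X+\Delta)$ is also big. By the $\nu(\tilde L)=1$ expansion, bigness for some $m$ reduces to
\[
\bigl(-(K_X+\Delta)\bigr)^n+nm\,L\cdot\bigl(-(K_X+\Delta)\bigr)^{n-1}>0,
\]
which holds unless both intersection numbers vanish. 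In the non-degenerate case, the vanishing yields $h^0(X,mL)=P(m)$ for $m\gg 0$; non-negativity of $h^0$ forces $c_1\geq 0$, and $c_0\neq 0$ then gives $h^0(X,mL)>0$ for some $m$, producing the desired effective representative.

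The hard part, I expect, will be the degenerate case in which both $L\cdot(-(K_X+\Delta))^{n-1}=0$ and $(-(K_X+\Delta))^n=0$: here $mL-(K_X+\Delta)$ is nef but never big and Kawamata--Viehweg is unavailable. I would attempt to handle this by reducing to a lower-dimensional problem, exploiting the small numerical dimension of $-(K_X+\Delta)$ to produce a fibration (ideally a numerical Iitaka fibration for $-(K_X+\Delta)$) along which $L$ descends, and then applying the statement inductively on the dimension of the base.
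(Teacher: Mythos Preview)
Your plan handles the non-degenerate case cleanly, but the degenerate case is where the real content lies, and your proposal for it does not work. Take $K_X+\Delta\equiv 0$: this is compatible with all hypotheses (e.g.\ any even-dimensional strict Calabi-Yau manifold, where $\chi(X,\OO_X)=2$). Then $mL-(K_X+\Delta)\equiv mL$ has numerical dimension $1$ for every $m$, so it is never big and Kawamata--Viehweg is unavailable. Your suggested fix---produce a fibration from $-(K_X+\Delta)$ and descend $L$---is empty here, since $-(K_X+\Delta)\equiv 0$ yields no fibration whatsoever and there is no lower-dimensional base to induct on. You are left with exactly the original problem on a K-trivial variety. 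In this situation $P(m)=\chi(X,\OO_X)$ is a nonzero constant, but you have no control over $h^1,\dots,h^{n-1}$, and nothing prevents these from growing with $m$; the alternating sum tells you nothing about $h^0$. The same obstruction arises whenever $\nu\big({-}(K_X+\Delta)\big)$ is small, which is precisely the generic situation once one is away from the Fano regime.

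The paper's argument is entirely different and makes no bigness case distinction. One argues by contradiction: if $L$ is not num-effective, then on a resolution $\pi\colon Y\to X$ one shows that $H^p\big(Y,\OO_Y(-m\pi^*L)\big)=0$ for \emph{every} $p$ and all $m\gg 0$, whence the Euler polynomial vanishes identically and $\chi(X,\OO_X)=0$. The vanishing of each $H^p$ is the substance: if it were nonzero for infinitely many $m$, then Serre duality combined with the Hard Lefschetz theorem for pseudoeffective line bundles (Demailly--Peternell--Schneider, via singular metrics and multiplier ideals) would produce nonzero sections of $\Omega_Y^p\otimes\OO_Y(m\pi^*L)$ up to numerically trivial twists. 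A saturation argument then extracts a line subsheaf $\OO_X(M')\subseteq\big(\Omega_X^{[1]}\big)^{\boxtimes N}$, and the decisive input is that $-M'$ is pseudoeffective whenever $-(K_X+\Delta)$ is nef---an extension of a theorem of Ou proved earlier in the paper. This forces $L$ to be num-effective after all. So the substitute for your missing Kawamata--Viehweg step is analytic and structural: Hard Lefschetz with singular metrics together with positivity constraints on subsheaves of cotangent tensors.
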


Applied to rationally connected varieties, Theorem \ref{thm:mainthm_numdim=1partA} is one of the main ingredients in the proofs of Theorems \ref{thm:mainthm_dim=3} and \ref{thm:mainthm_dim=4}.

\medskip

A crucial ingredient in the proofs of the results above is the following reduction in Theorem \ref{thm:reduction} to the case of rationally connected varieties. The key concept is that of locally constant fibrations, see Section \ref{sec:prelim}. By \cite{MW21}, the assumption on the existence of a locally constant fibration in Theorem \ref{thm:reduction} is always satisfied after passing to a quasi-\'etale cover.

\begin{thmA}\label{thm:reduction}
	Let $(X,\Delta)$ be a projective klt pair such that ${-}(K_X+\Delta)$ is nef. Assume that there exists a locally constant MRC fibration $f\colon X\to Y$ with respect to $(X,\Delta)$, where $Y$ is a projective canonical variety with $K_Y\sim0$. Let $F$ be the fibre of $f$. If $\kappa\big(F,{-}(K_F+\Delta|_F)\big)\geq0$, then the numerical class of ${-}(K_X+\Delta)$ is effective.
\end{thmA}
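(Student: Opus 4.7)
The plan is to reduce, using the locally constant structure of $f$, to a fibrewise situation where adjunction combined with $K_Y\sim0$ transports the effectivity from $F$ to $X$.

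First, I would exploit the fact that $f$ is locally constant \emph{with respect to} $(X,\Delta)$ to pass to a finite quasi-\'etale cover $\pi_Y \colon Y' \to Y$ trivialising the monodromy of $f$, and let $\pi \colon X' := X \times_Y Y' \to X$ be the induced base change. By the definition of local constancy, after such a cover $X' \simeq Y' \times F$ as fibrations over $Y'$, with projections $p_{Y'}$ and $p_F$, and moreover $\Delta' := \pi^*\Delta = p_F^*(\Delta|_F)$. Since $\pi_Y$ is quasi-\'etale and $K_Y \sim 0$, we have $K_{Y'} \sim_{\Q} 0$; combining this with adjunction on the product gives
\[
K_{X'} + \Delta' \;\sim_{\Q}\; p_{Y'}^* K_{Y'} + p_F^*(K_F + \Delta|_F) \;\sim_{\Q}\; p_F^*(K_F + \Delta|_F).
\]
Because $\pi$ is itself quasi-\'etale, $K_{X'} + \Delta' = \pi^*(K_X+\Delta)$, whence
\[
\pi^*\bigl({-}(K_X+\Delta)\bigr) \;\sim_{\Q}\; p_F^*\bigl({-}(K_F+\Delta|_F)\bigr).
\]

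Second, the hypothesis $\kappa\bigl(F,{-}(K_F+\Delta|_F)\bigr) \geq 0$ yields an effective $\Q$-divisor $D_F \sim_{\Q} {-}(K_F + \Delta|_F)$ on $F$, and $D' := p_F^* D_F$ is then an effective $\Q$-divisor on $X'$ with $D' \sim_{\Q} \pi^*\bigl({-}(K_X+\Delta)\bigr)$. To descend to $X$, I would (after replacing $\pi$ by its Galois closure if necessary) use the Galois group $G$ of $\pi$: the push-forward $\pi_* D'$ is effective on $X$, and
\[
\pi^*\pi_* D' \;=\; \sum_{g \in G} g^* D' \;\equiv\; |G| \cdot \pi^*\bigl({-}(K_X+\Delta)\bigr),
\]
which, by injectivity of $\pi^*$ on numerical classes, forces $\pi_* D' \equiv |G| \cdot \bigl({-}(K_X+\Delta)\bigr)$. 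Hence $\tfrac{1}{|G|}\pi_* D'$ is an effective $\Q$-divisor numerically equivalent to ${-}(K_X+\Delta)$, as required.

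The principal obstacle lies in the trivialisation step: the monodromy representation $\rho \colon \pi_1(Y) \to \Aut(F)$ does not automatically have finite image, so the existence of a \emph{finite} quasi-\'etale cover that simultaneously trivialises $f$ and respects $\Delta$ is a nontrivial assertion. This is where one must combine the structural constraints on $\pi_1(Y)$ for a canonical variety with $K_Y \sim 0$ (Beauville--Bogomolov-type decompositions, cf.\ \cite{MW21}) with the fact that the rationally connected fibre $F$ has a strongly constrained automorphism group. Once such a trivialisation is in place, the rest of the argument is essentially formal.
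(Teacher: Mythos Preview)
Your outline identifies the right obstacle but does not overcome it, and the paper's proof takes a genuinely different route precisely to avoid it.

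The gap is in the trivialisation step. You need the monodromy $\rho\colon\pi_1(Y)\to\Aut(F)$ to have \emph{finite} image in order to pass to a finite quasi-\'etale cover on which $X'\simeq Y'\times F$. But this is not known, and in general is false: already for $F=\mathbb P^r$ and $Y$ an abelian variety, $\pi_1(Y)\simeq\Z^{2g}$ can map to $\Aut(F)=\mathrm{PGL}(r+1)$ with infinite (abelian) image, and then no finite cover trivialises the bundle. The structural results you allude to (Beauville--Bogomolov decomposition for $Y$, linearity of $\Aut(F)$ for rationally connected $F$) yield at best that any \emph{linear} representation of $\pi_1(Y)$ is \emph{virtually abelian} --- this is Proposition~\ref{m-gl_rep} in the paper --- which is strictly weaker than finite.

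The paper's argument never trivialises $f$. Instead it passes to the direct image: for $L={-}m(K_X+\Delta)$ one shows (Proposition~\ref{m-prop}) that $f_*\OO_X(L)$ is a flat vector bundle on $Y$ given by the induced representation
\[
\tau\colon\pi_1(Y)\longrightarrow\GL\big(H^0(F,\OO_F(L|_F))\big).
\]
Here $B\sim {-}mK_Y\sim 0$, using $K_Y\sim 0$ exactly as you do. By Proposition~\ref{m-gl_rep} the image of $\tau$ is virtually abelian, hence virtually solvable; after a finite \'etale base change one may apply the Lie--Kolchin theorem to upper-triangularise $\tau$, producing a flat (hence numerically trivial) line bundle $\mathcal M\subseteq f_*\OO_X(L)$. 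This gives $H^0\big(X,\OO_X(L)\otimes f^*\mathcal M^{-1}\big)\neq0$, so $L$ is num-effective. The point is that one only needs a one-dimensional invariant subspace of $H^0(F,\OO_F(L|_F))$, not a trivialisation of the whole fibre bundle; virtual abelianness suffices for the former but not for the latter.
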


Combining Theorem \ref{thm:reduction} with Theorem \ref{thm:mainthm_numdim=1partA}, we obtain the following result valid in every dimension.

\begin{thmA}\label{thm:mainthm_numdim=1partB}
	Let $(X,\Delta)$ be a projective klt pair such that ${-}(K_X+\Delta)$ is nef of numerical dimension $1$. Then the numerical class of ${-}(K_X+\Delta)$ is effective.
\end{thmA}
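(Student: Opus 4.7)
The plan is to combine the reduction to rationally connected fibres provided by Theorem~\ref{thm:reduction} with the criterion of Theorem~\ref{thm:mainthm_numdim=1partA} applied to a general fibre of an MRC fibration.

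By \cite{MW21}, after replacing $X$ by a suitable quasi-\'etale cover $\pi\colon\tilde X\to X$ and $\Delta$ by its pullback $\tilde\Delta$ satisfying $K_{\tilde X}+\tilde\Delta=\pi^*(K_X+\Delta)$, I may assume that there exists a locally constant MRC fibration $f\colon X\to Y$ with respect to $(X,\Delta)$, where $Y$ is a projective canonical variety with $K_Y\sim 0$. Such a cover preserves the klt property, the nefness of $-(K_X+\Delta)$, and its numerical dimension; moreover, effectiveness of the numerical class descends via $\pi_{*}$, so it suffices to argue on $\tilde X$, which I continue to denote by $X$.

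Let $F$ denote a general fibre of $f$. Then $F$ is rationally connected, $(F,\Delta|_F)$ is klt, and $-(K_F+\Delta|_F)=-(K_X+\Delta)|_F$ is nef by adjunction. I distinguish two cases according to $\nu_F:=\nu\bigl(F,-(K_F+\Delta|_F)\bigr)$. If $\nu_F=0$, then $-(K_F+\Delta|_F)\equiv 0$, so $(F,\Delta|_F)$ is klt log Calabi--Yau; by the known Nonvanishing theorem in that setting (Gongyo--Nakayama) one has $K_F+\Delta|_F\sim_{\Q}0$, hence $\kappa\bigl(F,-(K_F+\Delta|_F)\bigr)\geq 0$. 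If $\nu_F\geq 1$, the local product structure of $f$ together with $K_Y\sim 0$ forces $\nu_F=\nu\bigl(X,-(K_X+\Delta)\bigr)=1$. In this second case, since $F$ is rationally connected with rational singularities (klt implies rational), we have $\chi(F,\OO_F)=1\neq 0$ and $H^1(F,\OO_F)=0$, so $\Pic^0(F)=0$. Pick $m\in\N$ such that $L:=-m(K_F+\Delta|_F)$ is Cartier; then $L$ is a nef Cartier divisor of numerical dimension $1$, and Theorem~\ref{thm:mainthm_numdim=1partA} applied to $(F,\Delta|_F)$ and $L$ shows that the numerical class of $L$ is effective. Because $\Pic^0(F)=0$, numerical equivalence of $\Q$-Cartier divisors on $F$ coincides with $\Q$-linear equivalence modulo torsion, so $L$ is $\Q$-linearly equivalent to an effective $\Q$-divisor, giving $\kappa\bigl(F,-(K_F+\Delta|_F)\bigr)\geq 0$ in this case as well.

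In either case the hypotheses of Theorem~\ref{thm:reduction} are satisfied, so the numerical class of $-(K_X+\Delta)$ is effective, and pushing the resulting effective representative forward under $\pi$ completes the argument. The main obstacle is the second case: one has to verify, using the locally constant structure and $K_Y\sim 0$, that the numerical dimension does not drop upon restriction to $F$, and one has to upgrade numerical effectiveness of $L$ into $\kappa\bigl(F,-(K_F+\Delta|_F)\bigr)\geq 0$, which is where the rational connectedness of $F$ (through the vanishing of $\Pic^0(F)$) is indispensable.
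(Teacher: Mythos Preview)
Your proof is correct and follows essentially the same route as the paper: pass to a quasi-\'etale cover admitting a locally constant MRC fibration over a canonical base with $K_Y\sim0$, show $\kappa\big(F,-(K_F+\Delta|_F)\big)\geq0$ on the rationally connected fibre $F$ via Theorem~\ref{thm:mainthm_numdim=1partA} and the vanishing of $\Pic^0(F)$, and conclude by Theorem~\ref{thm:reduction}. The one difference is that the paper invokes Theorem~\ref{thm:MW}(b) (from \cite{EIM23}) to obtain $\nu_F=\nu\big(X,-(K_X+\Delta)\big)=1$ directly, whereas you split into cases $\nu_F\in\{0,1\}$; this has the mild advantage of avoiding the \cite{EIM23} input. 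Two small remarks on exposition: in your case $\nu_F\geq1$ the conclusion $\nu_F=1$ follows from the elementary inequality $\nu(D|_F)\leq\nu(X,D)$ for nef $D$ restricted to a fibre, not from the locally constant structure as your phrasing suggests; and your case $\nu_F=0$ needs no appeal to Gongyo--Nakayama, since $-(K_F+\Delta|_F)\equiv0$ together with $\Pic^0(F)=0$ already gives $-(K_F+\Delta|_F)\sim_\Q0$.
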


\medskip

\noindent{\sc Overview of techniques and related work.}
We make a few brief comments on the techniques involved in the proof of Theorem \ref{thm:mainthm_dim=3}. In higher dimensions the situation becomes much more involved, but the main ideas of the proofs are present already in dimension $3$. A more detailed explanation is given at the beginning of each section.

We may assume that $X$ is uniruled by Lemma \ref{lem:nefanticanimpliesuniruled}. When $X$ is a \emph{smooth} rationally connected threefold with ${-}K_X$ nef, then we know that $\kappa(X,{-}K_X)\geq1$ by \cite{BP04}. A detailed classification of all such $X$ was completed in \cite{Xie20}. Those proofs use smoothness crucially. Here we use completely different methods.

We first treat the case where $X$ is rationally connected and ${-}K_X$ is nef. When $\nu(X,{-}K_X)=2$, then the proof proceeds by analysing the Euler-Poincar\'e characteristic via Hirzebruch-Riemann-Roch and Kawamata-Vieh\-weg vanishing -- this part of the proof is similar to Miyaoka's proof for minimal threefolds of numerical dimension $2$, and the main new ingredient is the recent proof of the pseudoeffectivity of the second Chern class of $X$ in \cite{Ou23}. The proof when $\nu(X,{-}K_X)=1$ is much more involved and the techniques used in this case originated in the recent progress towards the Abundance Conjecture in \cite{LP18a,LP20b}. We recall the recent new proof of Corollary \ref{cor:cormain} when $\varepsilon=1$ in Section \ref{sec:minimal}, since it is relevant also for the proof of Theorem \ref{thm:mainthm_dim=3}. We adapt the Nonvanishing criterion from \cite{LP18a} to the situation in this paper by using another important result of \cite{Ou23} on the subsheaves of sheaves of differential forms, which we slightly generalise in Section \ref{sec:subsheaves}.

We treat next the case where $X$ is uniruled and ${-}K_X$ is nef. To that end, we reduce the problem to the previously proved rationally connected case, by proving Theorem \ref{thm:reduction} in Section \ref{sec:reduction}. The key ingredient is the structure theory of varieties with nef anticanonical bundle which was recently completed in \cite{MW21}, building on \cite{Cao19,CH19,Wan22} and other works.

Finally, to treat the general case, we employ the recently developed Minimal Model Program for generalised pairs as in \cite{HaconLiu21,LT22b}. This is where the idea to use generalised pairs hinted at on page \pageref{page} becomes crucial.

We conclude the introduction with a remark about recent results if one has stronger assumptions than nefness on ${-}K_X$. Namely, if ${-}K_X$ is \emph{strictly nef}, i.e.\ if ${-}K_X$ intersects every curve on $X$ positively, then it is expected that $X$ is a Fano variety. This was confirmed in dimension $3$ in \cite{Ser95,LOWYZ} and in many cases in dimension $4$ in \cite{Liu21}. When $X$ is a smooth, projective, rationally connected fourfold with ${-}K_X$ strictly nef, then $\kappa(X,{-}K_X)\geq0$ by \cite{Liu21}.

\section{Nonvanishing on minimal varieties}\label{sec:minimal}

In this section we briefly sketch the proof of the Nonvanishing theorem for minimal threefolds, i.e.\ we sketch the proof of Corollary \ref{cor:cormain} when $\varepsilon=1$. Therefore, if $(X,\Delta)$ is a log canonical pair such that $K_X+\Delta$ is nef, the goal is to show that the numerical class of $K_X+\Delta$ is effective. As mentioned in the introduction, we know in fact that $\kappa(X,K_X+\Delta)\geq0$. This follows from the proof; however, it is a general fact that for divisors of the form $K_X+\Delta$ in any dimension, the effectivity of their numerical class implies that they have non-negative Iitaka dimension, see \cite[Theorem 1.1]{CKP12}. The content of this section is not new, but the goal is to collect proofs which are spread in different papers.

The first proof of the Nonvanishing theorem for minimal threefold pairs was completed in \cite{KMM94}, building on the fundamental work of Miyaoka for terminal threefolds in \cite{Miy87,Miy88a}. The point of this section is to present a recent new proof, which allows generalisations to higher dimensions.

\medskip

The first useful reduction is to assume that $X$ has rational singularities. This can be achieved by \cite[Theorem 5.22]{KM98} by passing to a dlt blowup as in Theorem \ref{thm:dltblowup}(b). We then proceed in several steps.

\subsection*{Irregular threefolds}

We first assume that $X$ admits a nontrivial morphism to an abelian variety. Then $K_X+\Delta$ is semiample: this is the content of \cite[Corollary 1.2]{Hu16} and \cite[Corollary 3.2]{Laz19}, see also \cite[Lemma 4.1]{LM21}, and the argument is inductive, hence it works in principle also in higher dimensions. One of the main ingredients in the proof of \cite[Corollary 3.2]{Laz19} is the subadditivity of the Iitaka dimension from \cite{KP17}. This part of the proof is similar in spirit to the proof from the 1980s, which relied on the solution of particular cases of Iitaka's conjecture $C_{n,m}$, see for instance \cite[pp.\ 73-74]{MP97}. 

\subsection*{Regular terminal threefolds}

We may thus assume that $H^1(X,\OO_X)=0$. It is convenient to first deal with the case when $\Delta=0$ and the singularities of $ X $ are terminal, and to distinguish four cases, depending on the value of the numerical dimension $\nu(X,K_X)\in\{0,1,2,3\}$. The result is trivial if $\nu(X,K_X)\in\{0,3\}$, since then $K_X$ is either torsion or big.

If $\nu(X,K_X)=2$, then the main ingredients for the proof are Ka\-wa\-ma\-ta-Viehweg vanishing and the pseudoeffectivity of the second Chern class of the minimal variety $X$ \cite[Theorem 6.6]{Miy87} together with a careful analysis of the Euler-Poincar\'e characteristic by the Hirzebruch-Riemann-Roch formula; this is Miyaoka's ingenious proof from \cite{Miy87}. This part of the proof is difficult, but much easier than the case $\nu(X,K_X)=1$.

The main case is $\nu(X,K_X)=1$, as explained in \cite[Remark 6.8]{LP18a}. Since $H^1(X,\OO_X)=0$, we easily deduce that $\chi(X,\OO_X)>0$. Then \cite[Theorem 6.7]{LP18a} gives that $\kappa(X,K_X)\geq0$, and this result works in any dimension. The proof is partly analytic: it uses multiplier ideals associated with singular hermitian metrics. The heart of the proof is the Nonvanishing Criterion \cite[Theorem 6.3]{LP18a}, which reduces the Nonvanishing problem to that of the existence of many pluricanonical differential forms.

Note that Miyaoka's proof in \cite{Miy88a} uses Donaldson's theorem on stable vector bundles and a careful analysis of the algebraic fundamental group of the smooth locus of $X$; it is crucial for this proof that $X$ is a threefold. 

\subsection*{The general case}
Having established the terminal case, we now deal with the general case of a log canonical pair $(X,\Delta)$. By passing to a log resolution, we may assume that $X$ is smooth and $\Delta$ has simple normal crossings support, but we of course lose the nefness of $K_X+\Delta$.

There are two cases to consider. If the variety $X$ is not uniruled, then $K_X$ is pseudoeffective by \cite[Corollary 0.3]{BDPP}, and it suffices to show that $\kappa(X,K_X)\geq0$. By passing to a minimal model of $X$ we may assume that $K_X$ is nef, and then the conclusion follows from the cases treated above. If the variety is uniruled, then the conclusion follows from \cite[Theorem 1.1]{LM21}. As above, the argument is inductive, hence it works in principle also in higher dimensions.

\section{Preliminaries}\label{sec:prelim}

	In this paper we work over the field $ \C $ of complex numbers. We use the same notation for varieties and their associated complex analytic spaces, as well as for coherent sheaves and their associated coherent analytic sheaves.
	
	\medskip
	
	Let $f \colon X\dashrightarrow Y$ be a map between normal complex spaces. Then $f$ is a \emph{fibration} if it is a projective surjective morphism with connected fibres, and it is a \emph{birational contraction} if it is a birational map whose inverse does not contract any divisors.
	
	If $f \colon X\to Y$ is a fibration between normal varieties, then we say that $f$ is \emph{analytically locally trivial} if all fibres of $f$ are isomorphic, and $Y$ can be covered by analytic open sets $U$ such that we have isomorphisms $f^{-1}(U)\simeq U \times F$, where $F$ is the fibre of $f$. In other words, $f\colon X\to Y$ gives $X$ a structure of an analytic fibre bundle over $Y$.
	
	If $f \colon X\to Y$ is a birational contraction between normal varieties and if $D$ is an $\R$-Cartier $\R$-divisor on $X$ such that $f_*D$ is $\R$-Cartier, then we say that $f$ is \emph{$D$-nonpositive} if there exists a resolution of indeterminacies $(p,q)\colon W\to X\times Y$ of $f$ such that $ W $ is smooth and $p^*D\sim_\R q^* f_*D + E$, where $E$ is an effective $q$-exceptional $\R$-divisor.
	
	A morphism $f \colon X\to Y$ between normal varieties is \emph{quasi-\'etale} if $ \dim X = \dim Y $ and if there exists a closed subset $ Z \subseteq X $ of codimension at least $2$ such that $ f |_{X \setminus Z} \colon X \setminus Z \to Y $ is \'etale. A \emph{quasi-\'etale cover} is a finite, surjective, quasi-\'etale morphism.
	
	If $X$ is a normal projective variety, we denote by $ q(X) := h^1(X,\OO_X) $ the \emph{irregularity} of $ X $, and by
	\[ \widetilde{q}(X) := \sup \{ q(Y) \mid Y \text{ is a quasi-\'etale cover of } X \} \in \N \cup \{\infty\} \]
	the \emph{augmented irregularity} of $ X $.
	By \cite[Lemma 2.19]{GGK19}, if $ X $ has klt singularities and numerically trivial canonical class, then $\widetilde{q}(X)\leq\dim X$.
	
	\subsection{Numerical dimension}
	
	Let $X$ be a normal projective variety and let $D$ be a nef $\R$-Cartier $\R$-divisor on $X$. The \emph{numerical dimension} of $D$ is 
	$$\nu(X,D)=\sup\{k\in\N\mid D^k\not\equiv0\} . $$
	One can also define the numerical dimension of $D$ if it is only pseudoeffective \cite{Nak04}. We refer to \cite[\S2.2]{LP20a} for the definition and some main properties that we frequently use in this paper without explicit mention.
	
	We will need the following simple lemma, which is similar in spirit to \cite[Lemma 2.9]{DL15}.
	
	\begin{lem}\label{lem:numdim}
		Let $X$ be a normal projective variety, and let $A$ and $B$ be pseudoeffective $ \Q $-divisors on $X$. Then $\nu(X,\alpha A+\beta B)$ does not depend on the positive rational numbers $\alpha$ and $\beta$.
	\end{lem}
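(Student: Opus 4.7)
The plan is to reduce the statement to two standard properties of Nakayama's numerical dimension for pseudoeffective divisors, which are recorded in \cite[\S2.2]{LP20a}: namely that $\nu(X,cD)=\nu(X,D)$ for every positive rational number $c$ (homogeneity), and that $\nu(X,D+E)\ge\nu(X,D)$ whenever $D$ and $E$ are pseudoeffective (monotonicity under adding a pseudoeffective class).

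Given two pairs of positive rationals $(\alpha,\beta)$ and $(\alpha',\beta')$, set $D:=\alpha A+\beta B$ and $D':=\alpha' A+\beta' B$. I would introduce the scalars
\[
\lambda:=\min\Big\{\tfrac{\alpha}{\alpha'},\tfrac{\beta}{\beta'}\Big\}
\qquad\text{and}\qquad
\mu:=\max\Big\{\tfrac{\alpha}{\alpha'},\tfrac{\beta}{\beta'}\Big\},
\]
both positive and rational. Then $D-\lambda D'=(\alpha-\lambda\alpha')A+(\beta-\lambda\beta')B$ and $\mu D'-D=(\mu\alpha'-\alpha)A+(\mu\beta'-\beta)B$ have non-negative coefficients by construction, and since $A$ and $B$ are pseudoeffective these two $\Q$-divisors are pseudoeffective as well.

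By monotonicity, $\nu(X,D)=\nu(X,\lambda D'+(D-\lambda D'))\ge\nu(X,\lambda D')$ and $\nu(X,\mu D')=\nu(X,D+(\mu D'-D))\ge\nu(X,D)$. Invoking homogeneity to remove the scalars $\lambda$ and $\mu$ yields $\nu(X,D')\le\nu(X,D)\le\nu(X,D')$, which gives the desired equality. The only mild subtlety is checking that the two properties above apply when we only assume pseudoeffectivity (not nefness) of the divisors involved; this is precisely the setting in which Nakayama's numerical dimension is defined, and the two properties are immediate from the definition via the asymptotic growth $h^0(X,\lfloor mD\rfloor+A)$ for an ample $A$, so there is no real obstacle.
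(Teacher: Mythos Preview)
Your proof is correct and takes essentially the same approach as the paper: both sandwich one combination between scalar multiples of the other using pseudoeffective differences, then apply homogeneity and monotonicity of $\nu$ (the paper cites \cite[Proposition V.2.7(1)]{Nak04} for this). Your explicit scalars $\lambda=\min\{\alpha/\alpha',\beta/\beta'\}$ and $\mu=\max\{\alpha/\alpha',\beta/\beta'\}$ are precisely the reciprocals of the paper's $t_1$ and $t_2$, with the roles of $D$ and $D'$ swapped.
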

	
	\begin{proof}
		Fix positive rational numbers $\alpha$, $\alpha'$, $\beta$ and $\beta'$. Then there exist positive rational numbers $t_1$ and $t_2$ such that $t_1\alpha\leq \alpha'\leq t_2\alpha$ and $t_1\beta\leq \beta'\leq t_2\beta$. Therefore, 
		\begin{align*}
		\nu(X,\alpha A+\beta B)&=\nu(X,t_1\alpha A+t_1\beta B)\leq\nu(X,\alpha' A+\beta' B)\\
		&\leq\nu(X,t_2\alpha A+t_2\beta B)=\nu(X,\alpha A+\beta B)
		\end{align*}
		by \cite[Proposition V.2.7(1)]{Nak04}.
	\end{proof}
	
	We will also need the following well-known consequence of the usual Kawamata-Viehweg vanishing theorem, see for instance \cite[Lemma 2.1]{LP17} for a short proof.
	
	\begin{lem}\label{lem:KVvanishing}
		Let $(X,\Delta)$ be a projective klt pair of dimension $n$ and let $D$ be a Cartier divisor on $X$ such that $D\sim_\Q K_X+\Delta+L$, where $L$ is a nef $\Q$-divisor with $\nu(X,L)=k$. Then
		$$H^i\big(X,\OO_X(D)\big) =0\quad\text{for all }i>n-k.$$
	\end{lem}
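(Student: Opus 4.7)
The plan is to deduce this from the standard Kawamata--Viehweg vanishing theorem by a two-step reduction: first pass to a smooth model, then cut down to the subvariety on which $L$ becomes big.

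The first step is a log resolution. Take a log resolution $\pi\colon Y\to X$ of $(X,\Delta)$ and write $K_Y+\Delta_Y=\pi^*(K_X+\Delta)+E$ with $(Y,\Delta_Y)$ klt and $E$ an effective $\pi$-exceptional $\Q$-divisor. Setting $D_Y:=\pi^*D+\lceil E\rceil$, one checks that $D_Y$ is Cartier on the smooth $Y$ and satisfies $D_Y\sim_\Q K_Y+\Delta_Y'+\pi^*L$ for some klt boundary $\Delta_Y'$, while $\pi^*L$ remains nef with $\nu(Y,\pi^*L)=k$. Since klt singularities are rational, a standard Grauert--Riemenschneider-type argument yields $R^i\pi_*\OO_Y(D_Y)=0$ for $i>0$ together with $\pi_*\OO_Y(D_Y)=\OO_X(D)$; the Leray spectral sequence then reduces the problem to proving $H^i(Y,\OO_Y(D_Y))=0$ for $i>n-k$.

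The second step is the strengthening of Kawamata--Viehweg vanishing for nef divisors of prescribed numerical dimension on the smooth projective $Y$. Fix an ample divisor $A$ on $Y$ and, for $m\gg 0$, choose $n-k$ general members $H_1,\dots,H_{n-k}\in|mA|$. Their complete intersection $V:=H_1\cap\cdots\cap H_{n-k}$ is $k$-dimensional, and the restriction $\pi^*L|_V$ is nef and big because $(\pi^*L)^k\cdot A^{n-k}>0$ by the definition of numerical dimension. Then I would peel off one hyperplane at a time using the short exact sequences
\[
0\to\OO_Y(D_Y-H_j)\to\OO_Y(D_Y)\to\OO_{H_j}(D_Y|_{H_j})\to 0,
\]
combining Serre vanishing in low degree (to kill the twists $\OO_Y(D_Y-H_j-\cdots)$) with the classical Kawamata--Viehweg vanishing applied on the smooth $V$, where $\pi^*L|_V$ is nef and big, to obtain the base case of the induction.

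The main technical obstacle, beyond routine bookkeeping, is the accounting in the log resolution step: one must arrange that $D_Y$ is Cartier and that $\Delta_Y'$ remains klt after rounding up the exceptional part $E$, so that the strong Kawamata--Viehweg vanishing applies on $Y$. This manipulation is entirely standard in the KV framework, and a short self-contained execution along these lines is given in \cite[Lemma 2.1]{LP17}.
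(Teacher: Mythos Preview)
Your overall plan matches the paper's: both defer to \cite[Lemma~2.1]{LP17}, and your Step~1 reduction to the smooth case via a log resolution and relative vanishing is correct. However, your explicit sketch of Step~2 has the hyperplane twist in the wrong direction. With the sequence
\[
0\to\OO_Y(D_Y-H)\to\OO_Y(D_Y)\to\OO_H(D_Y|_H)\to 0
\]
you cannot induct: adjunction gives $K_H=(K_Y+H)|_H$, so $D_Y|_H\sim_\Q K_H+\Delta_Y'|_H+\pi^*L|_H-H|_H$, which is \emph{not} of the adjoint form needed to reapply the statement on $H$. Your appeal to Serre vanishing for $\OO_Y(D_Y-H_j-\cdots)$ also fails in top degree: Serre duality plus Serre vanishing only kills $H^i(Y,D_Y-mA)$ for $i<n$, and $H^n$ survives (it is dual to $H^0(Y,K_Y-D_Y+mA)\neq0$ for $m\gg0$). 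Cutting all the way down to $V$ does not help either, since $D_Y|_V$ is again not of adjoint form and the dimension bound $H^i(V,\cdot)=0$ for $i>k$ is unrelated to the range $i>n-k$.

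The fix is to twist the other way. Use
\[
0\to\OO_Y(D_Y)\to\OO_Y(D_Y+H)\to\OO_H\big((D_Y+H)|_H\big)\to 0
\]
for a general very ample $H$. Then $(D_Y+H)|_H\sim_\Q K_H+\Delta_Y'|_H+\pi^*L|_H$ with $(H,\Delta_Y'|_H)$ klt and $\nu(H,\pi^*L|_H)=k$, so induction on $\dim Y$ gives $H^{i-1}\big(H,(D_Y+H)|_H\big)=0$ for $i>n-k$; and $D_Y+H\sim_\Q K_Y+\Delta_Y'+(\pi^*L+H)$ with $\pi^*L+H$ ample, so ordinary Kawamata--Viehweg gives $H^i(Y,D_Y+H)=0$ for $i>0$. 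The long exact sequence then yields $H^i(Y,D_Y)=0$ for $i>n-k$. This is exactly what \cite[Lemma~2.1]{LP17} does.
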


	\subsection{Num-effectivity}
	
	We first recall here the definition of num-effectivity from \cite{LP20a,LP20b}, which plays a central role in this paper.
	
	\begin{dfn}
		We say that an $ \R $-Cartier $\R$-divisor $D$ on a normal projective variety $X$ is \emph{num-effective} if the numerical equivalence class of $D$ belongs to the effective cone of $X$.
	\end{dfn}
	
	It is easy to see that num-effectivity has good descent properties in the presence of $\Q$-factoriality, whereas in the absence of $\Q$-factoriality we often need at least the presence of rational singularities. This is the content of Lemma \ref{lem:descenteff} below, which generalises \cite[Lemmas 2.14 and 2.15]{LP20a} to the setting of $ \R $-divisors. We first need the following.
	
	\begin{lem}\label{lem:key_descent}
		Let $f \colon X \to Y$ be a birational morphism between two normal projective varieties. Assume that $Y$ is $\Q$-factorial or that both $ X $ and $ Y $ have rational singularities. If $ E $ is a numerically trivial $ \R $-Cartier $ \R $-divisor on $ X $, then $ f_* E $ is a numerically trivial $ \R $-Cartier $ \R $-divisor on $ Y $ and we have $ E = f^* f_* E $.
	\end{lem}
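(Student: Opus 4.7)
The plan is to reduce the claim to the $\Q$-divisor version of the statement, which is precisely \cite[Lemmas 2.14 and 2.15]{LP20a}, via the standard $\Q$-basis trick. Since $E$ is $\R$-Cartier, I write $E = \sum_{i=1}^N a_i F_i$ for some $a_i \in \R$ and Cartier divisors $F_i$ on $X$. Choosing a $\Q$-basis $\{r_1, \ldots, r_s\}$ of the $\Q$-subspace of $\R$ spanned by $a_1, \ldots, a_N$ and expressing each $a_i$ in this basis, I rewrite
\[
E = \sum_{j=1}^s r_j G_j,
\]
where each $G_j$ is a $\Q$-Cartier $\Q$-divisor on $X$ (being a $\Q$-linear combination of the Cartier divisors $F_i$).

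The key observation is that each $G_j$ is itself numerically trivial. Indeed, for every curve $C \subseteq X$ the intersection numbers $G_j \cdot C$ are rational, while $\sum_j r_j (G_j \cdot C) = E \cdot C = 0$; $\Q$-linear independence of the $r_j$ in $\R$ then forces $G_j \cdot C = 0$ for every $j$. Applying \cite[Lemmas 2.14 and 2.15]{LP20a} to each $G_j$ under the same hypotheses on the singularities of $X$ and $Y$, I obtain numerically trivial $\Q$-Cartier $\Q$-divisors $f_* G_j$ on $Y$ with $G_j = f^* f_* G_j$.

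Taking the $\R$-linear combination with coefficients $r_j$ and using additivity of $f_*$ and linearity of $f^*$, one concludes that $f_* E = \sum_j r_j f_* G_j$ is an $\R$-Cartier $\R$-divisor on $Y$ which is numerically trivial, and
\[
f^* f_* E = \sum_j r_j f^* f_* G_j = \sum_j r_j G_j = E.
\]
I do not foresee any genuine obstacle: the only conceptual point is that a decomposition of $E$ into a formal $\R$-combination of Cartier divisors is not canonical, but passing to a $\Q$-linearly independent set of real coefficients forces the numerical triviality to descend to each rational component, which is exactly what allows the reduction to the previously established $\Q$-divisor case.
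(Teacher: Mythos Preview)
Your proof is correct but follows a different route from the paper's. The paper argues directly for $\R$-divisors: in the $\Q$-factorial case it observes that $f_*E$ is automatically $\R$-Cartier and then applies the Negativity Lemma to the $f$-exceptional, $f$-numerically trivial divisor $E - f^*f_*E$; in the rational singularities case it uses that $f^*\colon \Pic^0(Y)\to\Pic^0(X)$ is an isomorphism together with the fact that a numerically trivial $\R$-Cartier divisor lies in $\Pic^0(X)\otimes\R$. Your approach instead reduces to the $\Q$-Cartier case via the $\Q$-basis trick, which is a perfectly valid and standard manoeuvre; the advantage of the paper's argument is that it is self-contained and exhibits the geometric reason (Negativity Lemma, respectively the $\Pic^0$ isomorphism) explicitly, while yours packages the content as a clean reduction to a known result.

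One small caveat on your citation: as the paper itself indicates, \cite[Lemmas 2.14 and 2.15]{LP20a} are the $\Q$-divisor analogues of Lemma~\ref{lem:descenteff} (descent of num-effectivity), not literally of the present lemma. What you need for each $G_j$ is the numerical-triviality descent, which the paper isolates here precisely because it is the mechanism underlying those num-effectivity statements. The required content is certainly contained in the \emph{proofs} in \cite{LP20a}, but you should either cite those proofs rather than the statements, or simply note that for $\Q$-divisors the $\Q$-factorial case is immediate from the Negativity Lemma and the rational-singularities case from the $\Pic^0$ isomorphism.
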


	\begin{proof}
		Assume first that $ Y $ is $ \Q $-factorial. Then $ f_* E $ is an $ \R $-Cartier $ \R $-divisor on $ Y $, and by the Negativity Lemma \cite[Lemma 3.39(1)]{KM98} we infer that $ E = f^* f_* E $. It follows now easily by the projection formula that $f_*E$ is numerically trivial.
		
		Assume now that both $ X $ and $ Y $ have rational singularities. Then the pullback map $ f^* \colon \Pic^0(Y) \to \Pic^0(X) $ is an isomorphism. Since $ E\in \Pic^0(X)\otimes\R $ by \cite[Example 1.3.10]{Laz04}, there exists $ G\in \Pic^0(Y)\otimes\R $ such that $ E \sim_\R f^* G $, and thus $ f_* E \sim_\R G $. We conclude as in the previous paragraph.
	\end{proof}
	
	\begin{lem}\label{lem:descenteff}
		Let $f\colon X\to Y$ be a morphism between two projective varieties. Let $D$ be an $\R$-Cartier $\R$-divisor on $Y$ such that there exists an effective $\R$-Cartier $\R$-divisor $G$ on $X$ with $f^*D\equiv G$.
		\begin{enumerate}[\normalfont (a)]
			\item If $f$ is birational and if $Y$ is $\Q$-factorial, then $f_*G$ is an effective $\R$-Cartier $\R$-divisor and we have $D\equiv f_*G$ and $G=f^*f_*G$.
		
			\item If $f$ is birational and if $X$ and $Y$ have rational singularities, then $f_*G$ is an effective $\R$-Cartier $\R$-divisor and we have $D\equiv f_*G$ and $G=f^*f_*G$.
		
			\item If $f$ is finite and surjective, if $X$ and $Y$ are normal and if $Y$ has rational singularities, then $f_*G$ is an effective $\R$-Cartier $\R$-divisor and we have $D\equiv \frac{1}{\deg f}f_*G$.
		\end{enumerate}
	\end{lem}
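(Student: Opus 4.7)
I would treat all three parts uniformly by considering the auxiliary $\R$-Cartier $\R$-divisor $E := G - f^*D$ on $X$, which is numerically trivial by the hypothesis $f^*D \equiv G$.

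For parts (a) and (b), the plan is a direct application of Lemma \ref{lem:key_descent} to $E$: that lemma produces an $\R$-Cartier numerically trivial divisor $f_*E$ on $Y$ with $E = f^*f_*E$. Combined with the birational projection formula $f_*f^*D = D$ (as Weil divisors), this yields $f_*G = D + f_*E$, so $f_*G$ is $\R$-Cartier, effective (as the pushforward of an effective Weil divisor), and numerically equivalent to $D$; moreover $G = f^*D + f^*f_*E = f^*(D + f_*E) = f^*f_*G$.

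For part (c), Lemma \ref{lem:key_descent} does not apply directly, and I would adapt the argument using the norm map for finite morphisms. Writing $G = \sum a_i G_i$ with $G_i$ Cartier on $X$ and $a_i \in \R$, each pushforward $f_*G_i$ is in fact Cartier on $Y$: if $G_i$ is locally defined by $g_i \in K(X)^*$, then $\mathrm{Nm}_{K(X)/K(Y)}(g_i) \in K(Y)^*$ is a local equation of $f_*G_i$, a classical fact for finite surjective morphisms between normal varieties (principal divisors push forward via the norm map). Hence $f_*G = \sum a_i f_*G_i$ is $\R$-Cartier and effective. For the numerical identity $f_*G \equiv (\deg f)\cdot D$, I would test against arbitrary irreducible curves $C \subset Y$: using the cycle-theoretic pullback $f^*C$ (a well-defined 1-cycle on $X$ for finite $f$, with $f_*f^*C = (\deg f) C$) together with the projection-formula identity $(f_*G \cdot C)_Y = (G \cdot f^*C)_X$ for finite morphisms, the hypothesis $G \equiv f^*D$ gives $(f_*G \cdot C)_Y = (f^*D \cdot f^*C)_X = (\deg f)(D \cdot C)_Y$. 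Since $C$ was arbitrary, $f_*G \equiv (\deg f) \cdot D$, i.e.\ $D \equiv (\deg f)^{-1} f_*G$.

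The main obstacle is the first stage of part (c): establishing $\R$-Cartierness of $f_*G$ from the norm-map construction. The construction is natural for a single Cartier divisor, but one must verify that it extends linearly to $\R$-coefficient combinations and that local norm data glue to a genuinely $\R$-Cartier divisor on $Y$. The rational singularity assumption on $Y$ presumably enters here to provide the needed control (e.g.\ for the identification of numerically trivial $\R$-Cartier classes with elements of $\Pic^0(Y) \otimes \R$ via \cite[Example 1.3.10]{Laz04}, which underpins the compatibility of the norm construction with numerical equivalence).
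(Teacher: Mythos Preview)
Your treatment of (a) and (b) is correct and matches the paper's: both follow directly from Lemma~\ref{lem:key_descent} applied to $E=G-f^*D$.

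For (c), however, your direct norm-map argument has a genuine gap. The claim that $f_*G_i$ is Cartier whenever $G_i$ is Cartier is false in general for finite morphisms between normal varieties: the problem is that ``$G_i$ locally principal'' means locally on $X$, not on preimages of opens in $Y$, so there is no open $V\subseteq Y$ on which you can write $G_i|_{f^{-1}(V)}=\operatorname{div}(g_i)$ and take its norm. More concretely, if $X$ is smooth and $Y$ is normal with rational singularities but not $\Q$-factorial (e.g.\ the cone over a smooth quadric surface), then $\operatorname{Pic}(X)=\operatorname{Cl}(X)$ and $f_*\colon\operatorname{Cl}(X)\to\operatorname{Cl}(Y)$ hits classes outside $\operatorname{Pic}(Y)$; choosing $G_i$ appropriately gives a Cartier divisor whose pushforward is not even $\Q$-Cartier. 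So the rational-singularities hypothesis on $Y$ does not rescue the norm construction, and your numerical argument, which presupposes that $f_*G$ is $\R$-Cartier to intersect with curves, never gets off the ground.

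The paper's route for (c) follows \cite[Lemma~2.15]{LP20a} and uses part~(b) instead: take a resolution $\pi\colon\widetilde Y\to Y$, base-change to obtain a finite $\widetilde f\colon\widetilde X\to\widetilde Y$ and a birational $\sigma\colon\widetilde X\to X$, and work on $\widetilde Y$, where smoothness makes $\widetilde f_*(\sigma^*G)$ automatically $\R$-Cartier and the projection-formula computation you sketched goes through. One then descends along $\pi$ via part~(b), and \emph{this} is precisely where the rational-singularities assumption on $Y$ enters.
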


	\begin{proof}
		Parts (a) and (b) follow immediately from Lemma \ref{lem:key_descent}. For (c), we repeat verbatim the proof of \cite[Lemma 2.15]{LP20a}, except that we invoke (b) instead of \cite[Lemma 2.14]{LP20a}.
	\end{proof}
	
	By repeating verbatim the proof of \cite[Lemma 2.3]{LP20b}, except that we apply Lemma \ref{lem:descenteff}(b) instead of \cite[Lemma 2.14]{LP20a}, we obtain the following useful corollary.
	
	\begin{lem}\label{lem:MMPnumeff}
		Let $\varphi\colon X\dashrightarrow Y$ be a birational contraction between normal projective varieties, where $X$ has rational singularities. Let $D$ be an $\R$-Cartier $\R$-divisor on $X$ such that $\varphi_*D$ is $\R$-Cartier on $Y$ and such that the map $\varphi$ is $D$-nonpositive. If $\varphi_*D$ is num-effective, then $D$ is num-effective.
	\end{lem}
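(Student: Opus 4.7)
\medskip

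My plan is to follow the strategy of \cite[Lemma 2.3]{LP20b}, replacing the original descent input by Lemma \ref{lem:descenteff}(b), which is the only adjustment needed to accommodate the rational singularities hypothesis in place of $\Q$-factoriality.

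First I would resolve the indeterminacies of $\varphi$: choose a smooth projective variety $W$ together with birational morphisms $p\colon W\to X$ and $q\colon W\to Y$ such that $q=\varphi\circ p$ and such that the $D$-nonpositivity of $\varphi$ yields a relation
\[
p^*D \sim_\R q^*\varphi_*D + E,
\]
where $E$ is an effective $q$-exceptional $\R$-divisor. Since $\varphi_*D$ is num-effective on $Y$, there is an effective $\R$-Cartier $\R$-divisor $D'$ on $Y$ with $\varphi_*D\equiv D'$. Pulling back by $q$ and adding $E$ gives
\[
p^*D \equiv q^*D' + E,
\]
and the right-hand side is an effective $\R$-Cartier $\R$-divisor on $W$. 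Hence $p^*D$ is num-effective on $W$.

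Next I would descend this num-effectivity from $W$ to $X$ along the birational morphism $p$. Since $W$ is smooth and $X$ has rational singularities by hypothesis, the assumptions of Lemma \ref{lem:descenteff}(b) are met with the role of $f$ played by $p$. Applying that lemma to $D$ on $X$ and to the effective $\R$-Cartier $\R$-divisor $G:=q^*D'+E$ on $W$ (which satisfies $p^*D\equiv G$) produces an effective $\R$-Cartier $\R$-divisor $p_*G$ on $X$ with $D\equiv p_*G$. This shows that $D$ is num-effective and completes the proof.

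The only genuinely new aspect compared with \cite[Lemma 2.3]{LP20b} is the descent step, and the difficulty there is essentially absorbed into Lemma \ref{lem:descenteff}(b); once that result is in place, the argument is a routine pullback-pushforward manipulation. There is no subtle obstacle to anticipate beyond verifying that $W$ can be chosen smooth so that the hypotheses of Lemma \ref{lem:descenteff}(b) apply, which is standard.
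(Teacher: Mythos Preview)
Your proposal is correct and follows essentially the same approach as the paper: the paper simply states that one repeats verbatim the proof of \cite[Lemma 2.3]{LP20b}, replacing the descent input \cite[Lemma 2.14]{LP20a} by Lemma \ref{lem:descenteff}(b), which is exactly what you do. Your write-up even makes the resolution-of-indeterminacies step and the application of Lemma \ref{lem:descenteff}(b) explicit, so there is nothing to add.
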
					
	
	\subsection{Pairs and generalised pairs}
	
	A \emph{pair} $(X,\Delta)$ consists of a normal variety $X$ and an effective Weil $\R$-divisor $\Delta$ on $ X $ such that the divisor $K_X+\Delta$ is $\R$-Cartier. In this paper we work almost exclusively with pairs with rational coefficients, i.e.\ $ \Delta $ is a Weil $ \Q $-divisor and $ K_X + \Delta $ is a $ \Q $-Cartier divisor.
	
	The standard reference for the singularities of pairs and the foundations of the Minimal Model Program (MMP) is \cite{KM98}, and we use these freely in this paper. 
	
	We will frequently have to improve the singularities of pairs by passing to suitable models.

\begin{thm}\label{thm:dltblowup}
	Let $(X,\Delta)$ be a pair. 
	\begin{enumerate}[\normalfont (a)]
		\item If $(X,\Delta)$ is klt, then there exists a \emph{$\Q$-factorial terminalisation of $(X,\Delta)$}, i.e.\ a $\Q$-factorial terminal pair $(Y,\Gamma)$ together with a proper birational morphism $f\colon Y \to X$ such that $K_Y+\Gamma\sim_\Q f^*(K_X+\Delta)$.
	
		\item If $(X,\Delta)$ is log canonical, then there exists a \emph{dlt blowup of $(X,\Delta)$}, i.e.\ a $\Q$-factorial dlt pair $(Y,\Gamma)$ together with a proper birational morphism $f\colon Y \to X$ such that $K_Y+\Gamma\sim_\Q f^*(K_X+\Delta)$, where the divisor $\Gamma$ is the sum of $f^{-1}_*\Delta$ and all $ f $-exceptional prime divisors.
	\end{enumerate}
\end{thm}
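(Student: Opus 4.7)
The strategy for both parts is uniform: produce a log smooth model via a log resolution of $(X,\Delta)$, then run a suitable relative MMP over $X$ that contracts exactly the exceptional divisors one does not wish to keep. The two parts differ in how the boundary on the resolution is defined and in which version of the MMP is invoked.

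For (a), let $g\colon W\to X$ be a log resolution of $(X,\Delta)$ and let $E_1,\dots,E_r$ be the $g$-exceptional prime divisors, with discrepancies $a_i := a(E_i;X,\Delta) > -1$ by the klt assumption. Set
\[ \Delta_W := g^{-1}_*\Delta - \sum_{a_i<0} a_i E_i. \]
Then $(W,\Delta_W)$ is klt with log smooth support, and a direct computation of discrepancies yields
\[ K_W+\Delta_W \sim_\Q g^*(K_X+\Delta) + \sum_{a_i\ge 0} a_i E_i, \]
where the right-hand correction is effective and $g$-exceptional. The plan is to run the $(K_W+\Delta_W)$-MMP relative to $X$; this exists and terminates by BCHM \cite{BCHM}, and the negativity lemma forces it to contract precisely the $E_i$ with $a_i>0$. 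The output is a $\Q$-factorial birational model $f\colon Y\to X$ together with the strict transform $\Gamma$ of $\Delta_W$, satisfying $K_Y+\Gamma\sim_\Q f^*(K_X+\Delta)$. Terminality then holds because every divisor exceptional over $Y$ has positive discrepancy with respect to $(X,\Delta)$, hence with respect to $(Y,\Gamma)$ by the crepant relation.

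For (b), again take a log resolution $g\colon W\to X$, and set
\[ \Gamma_W := g^{-1}_*\Delta + \sum_i E_i, \]
with the sum running over all $g$-exceptional prime divisors taken with coefficient one. Log canonicity gives $a_i\ge -1$, so
\[ K_W+\Gamma_W \sim_\Q g^*(K_X+\Delta) + F, \qquad F := \sum_i (1+a_i) E_i \ge 0, \]
with $F$ effective and $g$-exceptional. Since $(W,\Gamma_W)$ is log smooth it is in particular dlt, and the plan is to run a $(K_W+\Gamma_W)$-MMP over $X$ with scaling of a general ample divisor. This MMP contracts exactly $\Supp F$ and produces the desired $(Y,\Gamma)$; the preservation of $\Q$-factoriality and of the dlt condition under each step is standard \cite{KM98}.

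The main obstacle is the existence and termination of the MMP in (b): one cannot invoke BCHM directly because $(W,\Gamma_W)$ is only lc, not klt. This is handled by the now-standard device of slightly perturbing the boundary to an auxiliary klt pair $(W,\Gamma_W-\varepsilon F)$ whose MMP over $X$ exists by BCHM and has the same contractible loci in the relevant range; one then transfers the outcome back to $(W,\Gamma_W)$, using that only divisors contained in $\Supp F$ can ever be contracted to force termination. In part (a) no such workaround is needed, as $(W,\Delta_W)$ is genuinely klt from the start.
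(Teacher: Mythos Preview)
The paper does not reprove this result: it simply cites \cite[Corollary 1.4.3]{BCHM} and the paragraph following it for (a), and \cite[Theorem 3.1]{KK10} for (b). Your sketch is essentially the strategy underlying those references, and the overall plan is sound, but both parts contain genuine gaps in execution.

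For (a), the assertion that ``every divisor exceptional over $Y$ has positive discrepancy with respect to $(X,\Delta)$'' is unjustified and fails in general, because a log resolution need not extract every valuation of discrepancy $\le 0$. Take $X=\mathbb{A}^2$ and $\Delta=\tfrac12 L_1+\tfrac12 L_2$ with two distinct lines through the origin: this pair is already log smooth, so your procedure returns $(Y,\Gamma)=(X,\Delta)$ unchanged, yet the exceptional divisor of the blowup of the origin has discrepancy $0$, so $(X,\Delta)$ is not terminal. The remedy implicit in \cite{BCHM} is to first show that a klt pair admits only finitely many exceptional valuations of discrepancy $\le 0$ and then extract precisely that set; a single arbitrary log resolution does not accomplish this.

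For (b), the proposed auxiliary pair $(W,\Gamma_W-\varepsilon F)$ is not klt whenever some exceptional $E_i$ has $a_i=-1$: such an $E_i$ has coefficient $0$ in $F$, so its coefficient in $\Gamma_W-\varepsilon F$ remains equal to $1$. Since part (b) concerns log canonical pairs, this case cannot be excluded. The perturbation actually used in \cite{KK10} (Hacon's argument) is different and more delicate: one subtracts a small multiple of the full reduced exceptional divisor and compensates with a $g$-ample class, then checks that the resulting klt MMP over $X$ has the same steps as the desired dlt MMP and terminates.
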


\begin{proof}
	For part (a), see \cite[Corollary 1.4.3]{BCHM} and the paragraph after that result, while part (b) is \cite[Theorem 3.1]{KK10}.
\end{proof}

	Generalised pairs were introduced in \cite{BH14,BZ16} and they play a crucial role in this paper. Here we only recall their definition and refer to e.g.\ \cite{LT22b} for a thorough discussion of their properties and most recent developments.
	
	\begin{dfn}
		A \emph{generalised pair} or \emph{g-pair} $(X/Z,B+M)$ consists of a normal variety $ X $, equipped with projective morphisms
		\[ X' \overset{f}{\longrightarrow} X \longrightarrow Z , \]
		where $ f $ is birational and $ X' $ is a normal variety, $B$ is an effective $ \R $-divisor on $X$, and $ M := f_* M' $, where $M'$ is an $\R$-Cartier $\R$-divisor on $X'$ which is nef over $Z$, such that $ K_X + B + M $ is $ \R $-Cartier. The divisor $ B $ is the \emph{boundary part} and $M$ is the \emph{nef part} of the g-pair $(X/Z,B+M)$.
	\end{dfn}
	
	The variety $X'$ in the definition may always be chosen as a sufficiently high birational model of $ X $. Often we do not refer explicitly to the whole data of a g-pair and simply write $(X/Z,B+M)$, but remember the whole g-pair structure. We work exclusively with g-pairs which are \emph{NQC}, that is, $M'$ is a non-negative linear combination of $\Q$-Cartier divisors on $X'$ which are nef over $ Z $; the acronym NQC stands for \emph{nef $\Q$-Cartier combinations} \cite{HanLi}. 
	
	In most situations in this paper, $M$ will simply be a $\Q$-Cartier divisor, and very often it will be nef itself, so that we may assume that the map $f$ in the definition is an isomorphism. The variety $Z$ will always be assumed to be a point, and we remove it from notation.
				
	One defines singularities of g-pairs in a similar way as for usual pairs, see for instance \cite[Definition 2.2]{LT22b}. With the same notation as above, an important property we need here is that if $M$ is nef, then the g-pair $(X,B+M)$ has log canonical (respectively klt, dlt) singularities if and only if the pair $(X,B)$ has log canonical (respectively klt, dlt) singularities.
		
	\subsection{The MMP for generalised pairs}
		
	The foundations of the MMP for $\Q$-factorial NQC log canonical generalised pairs were established recently in \cite{HaconLiu21,LT22b}. We recall briefly below the results that we need in this paper.
	
	The Cone and Contraction theorems for NQC g-pairs are now known \cite[Theorem 1.3]{HaconLiu21} and are analogous to the Cone theorem for usual pairs: similarly as in the case of usual pairs, an extremal contraction can be either divisorial, flipping, or define a Mori fibre space structure, and these exist and behave as in the usual MMP, see \cite[Theorem 1.2, Subsection 5.5]{HaconLiu21}. Therefore, we may run an MMP for any $ \Q $-factorial NQC log canonical g-pair, and note that $\Q$-factoriality is preserved in any such MMP by \cite[Corollaries 5.20 and 5.21 and Theorem 6.3]{HaconLiu21}.
	
	Now we elaborate on the MMP with scaling for generalised pairs, since we need it in this work. Let $ \big( X, (B+N) + (M+P) \big) $ be a $ \Q $-factorial NQC log canonical g-pair such that the divisor $ K_X + B + N + M + P $ is nef, where $B+N$ is the boundary part and $M+P$ is the nef part. Then by \cite[Corollary 5.22]{HaconLiu21} either $K_X +B +M$ is nef or, if 
	$$\lambda := \inf\{t \geq 0 \mid K_X + B + tN +M + tP\text{ is nef}\}$$
	is the corresponding \emph{nef threshold}, then there exists a $(K_X +B +M)$-negative extremal ray $R$ such that $(K_X +B +\lambda N +M +\lambda P) \cdot R = 0$. In particular, $K_X +B +\lambda N +M +\lambda P$ is nef and it is trivial with respect to the corresponding extremal contraction. By continuing this process, we obtain a \emph{$(K_X+B+M)$-MMP with scaling of $N+P$}. The main result of \cite{LT22b} is that there exists an MMP with scaling which  terminates in several instances which are of importance for the present paper.
	
\subsection{Around Grauert's theorem}

We use the following consequences of Grauert's theorem \cite[Corollary 12.9]{Har77} very often in this work.

\begin{lem}\label{lem:grauert}
	Let $f\colon X\to Y$ be a projective morphism of varieties, let $\mathcal F$ be a coherent sheaf on $X$ which is flat over $Y$ and assume that $h^0(X_y,\mathcal F|_{X_y})$ is constant as a function in $y\in Y$, where $X_y$ is the fibre of $ f $ over $y$. Let $\varphi\colon f^*f_*\mathcal F\to \mathcal F$ be the natural morphism. Then $f_*\mathcal F$ is a locally free sheaf on $X$, and:
	\begin{enumerate}[\normalfont (a)]
		\item $\varphi$ is non-zero if and only if $H^0(X_y,\mathcal F|_{X_y})\neq0$ for some $y\in Y$,
		
		\item $\varphi$ is surjective if and only if $\mathcal F|_{X_y}$ is globally generated for all $y\in Y$,
		
		\item if $\varphi$ is surjective and if for all $y\in Y$ we have $h^0(X_y,\mathcal F|_{X_y})=\rk\mathcal F$, then $\varphi$ is an isomorphism.
	\end{enumerate}
\end{lem}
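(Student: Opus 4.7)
The plan is to invoke Grauert's theorem directly to upgrade the constancy of $y \mapsto h^0(X_y,\mathcal F|_{X_y})$ on the reduced base $Y$ to local freeness of $f_*\mathcal F$ together with the validity of cohomology and base change: for every $y \in Y$, the natural map $(f_*\mathcal F)\otimes k(y) \to H^0(X_y,\mathcal F|_{X_y})$ is an isomorphism. Under this identification, the restriction $\varphi|_{X_y}$ coincides with the evaluation map $\mathrm{ev}_y\colon H^0(X_y,\mathcal F|_{X_y})\otimes_{\C} \mathcal O_{X_y} \to \mathcal F|_{X_y}$, so each of the three assertions becomes a fibrewise statement about $\mathrm{ev}_y$.

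For part (a), I would observe that if $H^0(X_y,\mathcal F|_{X_y})$ vanishes for every $y$, then the locally free sheaf $f_*\mathcal F$ has rank zero, hence $f_*\mathcal F = 0$ and $\varphi = 0$; conversely, any non-zero section over some fibre $X_{y_0}$ yields a non-zero homomorphism $\mathcal O_{X_{y_0}} \to \mathcal F|_{X_{y_0}}$ factoring through $\mathrm{ev}_{y_0} = \varphi|_{X_{y_0}}$, forcing $\varphi$ to be non-zero. For part (b), I would pass to the coherent cokernel $\mathcal C := \mathrm{coker}\,\varphi$ and exploit right-exactness of tensor product to get $\mathcal C|_{X_y} \simeq \mathrm{coker}\,\mathrm{ev}_y$ for every $y$; global generation of every $\mathcal F|_{X_y}$ amounts to the vanishing of $\mathcal C\otimes k(x)$ at every $x\in X$, which by Nakayama's lemma is in turn equivalent to $\mathcal C = 0$, i.e.\ to the surjectivity of $\varphi$.

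For part (c), assuming $\varphi$ is surjective, I would set $\mathcal K := \ker\varphi$ and work with the short exact sequence $0 \to \mathcal K \to f^*f_*\mathcal F \to \mathcal F \to 0$. The rank identity $\rk(f^*f_*\mathcal F) = \rk f_*\mathcal F = h^0(X_y,\mathcal F|_{X_y}) = \rk\mathcal F$ forces $\mathcal K$ to have generic rank zero on the integral variety $X$. Since $\mathcal K$ is a subsheaf of the locally free sheaf $f^*f_*\mathcal F$ on an integral scheme, it is torsion-free, hence $\mathcal K = 0$ and $\varphi$ is an isomorphism.

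The only subtle point I expect is the bookkeeping in part (c): interpreting $\rk \mathcal F$ consistently as the generic rank on $X$ so that the rank identity really forces the generic stalk of $\mathcal K$ to vanish. Once this is settled, the standard torsion-free/Nakayama package closes all three parts without further difficulty, and the core input throughout is simply the base-change identification provided by Grauert's theorem.
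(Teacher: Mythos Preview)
Your proof is correct and follows essentially the same approach as the paper's: the paper invokes Grauert's theorem for local freeness, declares (a) and (b) standard (citing \cite[Proposition 28.1.11]{Vak17}), and for (c) argues exactly as you do that the kernel of the surjection is torsion yet contained in the locally free sheaf $f^*f_*\mathcal F$, hence zero. Your write-up simply unpacks the ``standard'' parts via the base-change identification $\varphi|_{X_y}=\mathrm{ev}_y$ and the cokernel/Nakayama argument, which is the natural elaboration.
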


\begin{proof}
The sheaf $f_*\mathcal F$ is locally free by Grauert's theorem. The proof of (a) and (b) is standard, see for instance \cite[Proposition 28.1.11]{Vak17} for the case of line bundles. For (c), the assumptions imply that $\varphi$ is a surjective morphism of locally free sheaves of the same rank, hence it is generically an isomorphism. Thus, the kernel of $\varphi$ is a torsion subsheaf of $f^*f_*\mathcal F$, which then must be trivial.
\end{proof}

\begin{lem}\label{lem:End}
Let $f\colon X\to Z$ be an analytically locally trivial fibration between normal projective varieties. If $F$ is the fibre $F$ of $f$, assume that $H^1(F,\OO_F)=0$. Let $\mathcal L$ be a line bundle on $X$. Then $\mathcal L|_F$ does not move infinitesimally and $R^pf_*\mathcal L$ is a vector bundle on $Z$ for every $p$. If $\mathcal L|_F\simeq\OO_F$, then $f_*\mathcal L$ is a line bundle on $Z$ and $\mathcal L\simeq f^*f_*\mathcal L$.
\end{lem}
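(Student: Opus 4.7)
My strategy is to combine the vanishing $H^1(F,\OO_F)=0$ with the analytic local triviality to show that the isomorphism class of $\mathcal L|_{F_z}$ is independent of $z\in Z$, and then to appeal to Grauert's base change theorem together with Lemma \ref{lem:grauert}.

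The infinitesimal rigidity of $\mathcal L|_F$ is immediate from the identification of $H^1(F,\OO_F)$ with the tangent space to the Picard functor of $F$ at any line bundle. To promote this to global constancy of $\mathcal L|_{F_z}$, I fix $z_0\in Z$ and a small Stein analytic neighbourhood $U$ of $z_0$ over which the fibration trivialises, so that $f^{-1}(U)\simeq U\times F$; by shrinking further, I may assume that $U$ is contractible. Using the vanishings $H^i(U,\OO_U)=0$ for $i>0$ (as $U$ is Stein) and $H^i(U,\Z)=0$ for $i>0$ (as $U$ is contractible), the K\"unneth formula yields $H^1(U\times F,\OO_{U\times F})=0$ and $H^2(U\times F,\Z)\simeq H^2(F,\Z)$, and the exponential sequence then shows that the restriction $p_F^*\colon \Pic(F)\to\Pic(U\times F)$ is an isomorphism. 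Consequently $\mathcal L|_{U\times F}\simeq p_F^*(\mathcal L|_{F_{z_0}})$, and hence $\mathcal L|_{F_z}\simeq\mathcal L|_{F_{z_0}}$ for every $z\in U$. The isomorphism class of $\mathcal L|_{F_z}$ is therefore locally constant on $Z$, and as $Z$ is connected, it is globally constant.

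Once this is established, the function $z\mapsto h^p(F_z,\mathcal L|_{F_z})$ is constant on $Z$ for every $p\geq0$. Moreover, $f$ is flat since the projections $U\times F\to U$ are flat, so $\mathcal L$ is flat over $Z$, and Grauert's base change theorem then gives that $R^pf_*\mathcal L$ is locally free of finite rank on $Z$ for every $p$. If additionally $\mathcal L|_F\simeq\OO_F$, then $h^0(F_z,\mathcal L|_{F_z})=1$ for every $z\in Z$, so $f_*\mathcal L$ is a line bundle on $Z$. For the final identification, I apply Lemma \ref{lem:grauert}(c) with $\mathcal F=\mathcal L$: the natural morphism $\varphi\colon f^*f_*\mathcal L\to\mathcal L$ restricts on each fibre $F_z$ to the evaluation $H^0(F_z,\OO_{F_z})\otimes\OO_{F_z}\to\OO_{F_z}$, which is surjective; since $\rk\mathcal L=1=h^0(F_z,\mathcal L|_{F_z})$, the lemma yields that $\varphi$ is an isomorphism.

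The technical heart of the argument is the passage from infinitesimal to global constancy of $\mathcal L|_{F_z}$. The infinitesimal statement is a formal consequence of $H^1(F,\OO_F)=0$, but the global statement genuinely uses the analytic local triviality (to relate the line bundle on different fibres via the trivialisation $f^{-1}(U)\simeq U\times F$) and the connectedness of $Z$ (to patch local constancy into global constancy); once that is in place, the remaining steps are direct applications of Grauert's theorem and Lemma \ref{lem:grauert}.
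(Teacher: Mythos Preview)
Your proof is correct and follows essentially the same approach as the paper. The paper's proof is very terse: it observes $\End(\mathcal L|_F)\simeq\OO_F$, so $H^1\big(F,\End(\mathcal L|_F)\big)=0$ gives infinitesimal rigidity, and then appeals directly to ``Grauert's theorem'' and Lemma~\ref{lem:grauert} for the rest. You supply the details the paper suppresses, in particular the passage from infinitesimal rigidity to global constancy of $\mathcal L|_{F_z}$ via the K\"unneth/exponential-sequence argument on a contractible Stein trivialising open; the paper leaves this implicit in the phrase ``does not move infinitesimally\dots by Grauert's theorem''.
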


\begin{proof}
Since $\End(\mathcal L|_F)\simeq\mathcal L|_F\otimes\mathcal L^*|_F\simeq\OO_F$, we have $H^1\big(F,\End(\mathcal L|_F)\big)=0$, which shows the first claim by Grauert's theorem. Since $f$ is flat, the second claim follows from Lemma \ref{lem:grauert}.
\end{proof}

\subsection{Slopes with respect to movable classes}

We use freely the foundations of the theory of reflexive sheaves as in \cite{Har80}. If $\mathcal F$ is a reflexive sheaf on a normal variety $X$ and if $m$ is a positive integer, then we denote by
$$\mathcal F^{\boxtimes m}:=(\mathcal F^{\otimes m})^{**}\quad\text{and}\quad \mathcal \bigwedge^{[m]}\mathcal F:=\Big(\bigwedge\nolimits^m\mathcal F\Big)^{**}$$
the \emph{$m$-th reflexive tensor power} and the \emph{$m$-th reflexive exterior power} of $\mathcal F$, respectively. In particular, they coincide with the usual tensor operations on $\mathcal F$ on a \emph{big open} subset of $X$, i.e.\ on an open subset whose complement in $X$ has codimension at least $2$.

Recall from \cite{BDPP} that the \emph{cone of movable curve classes} on a normal projective variety $X$ is the dual of the cone of pseudoeffective divisors on $X$. The semistability theory of torsion-free coherent sheaves on normal $\Q$-factorial projective varieties with respect to movable curve classes was developed in \cite{GKP16} and many results are analogous to the classical theory of semistability with respect to complete intersection curves.

If $\mathcal F$ is a non-zero torsion-free coherent sheaf on a normal, $\Q$-factorial, projective variety $X$ and if $\alpha$ is a non-zero movable curve class on $X$, we denote by
$$\mu_\alpha(\mathcal F):=\frac{c_1(\mathcal F)\cdot\alpha}{\rk\mathcal F}$$
the \emph{slope of $\mathcal F$ with respect to $\alpha$}. We say that $\mathcal F$ is \emph{$\alpha$-semistable}, respectively \emph{$\alpha$-stable}, if $\mu_\alpha(\mathcal G) \leq \mu_\alpha(\mathcal F)$ for any non-zero coherent subsheaf $\mathcal G\subseteq \mathcal F$, respectively if $\mu_\alpha(\mathcal G) < \mu_\alpha(\mathcal F)$ for any non-zero coherent subsheaf $\mathcal G \subseteq \mathcal F$ with $\rk \mathcal G < \rk \mathcal F$. We define
$$\mu_\alpha^{\max} (\mathcal F) := \sup \left\{ \mu_\alpha(\mathcal G) \mid \text{$ 0 \ne \mathcal G \subseteq \mathcal F$ a coherent subsheaf}\, \right\}$$
and
$$\mu_\alpha^{\min}(\mathcal F):=\inf\left\{\mu_\alpha(\mathcal{Q})\mid \mathcal F\twoheadrightarrow\mathcal{Q}\text{ a non-zero torsion-free quotient}\right\}.$$
Then by \cite[Corollary 2.24]{GKP16} there exists a unique \emph{maximal destabilising subsheaf} $\mathcal F'$ of $\mathcal F$. In particular, $\mu_\alpha^{\max}(\mathcal F)=\mu_\alpha(\mathcal F')$.

By \cite[Corollary 2.26]{GKP16} there exists a unique \emph{Harder--Narasimhan filtration\emph} of $\mathcal F$, i.e.\ a filtration
	$$0=\mathcal F_0\subseteq \mathcal F_1\subseteq\dots\subseteq\mathcal F_r=\mathcal F,$$
	where each quotient $\mathcal F_i/\mathcal F_{i-1}$ is torsion-free, $\alpha$-semistable, and the sequence of slopes $\mu_\alpha(\mathcal F_i/\mathcal F_{i-1})$ is strictly decreasing. Moreover, the proof of the existence of the Harder--Narasimhan filtration shows that for each $i$ the sheaf $\mathcal F_i/\mathcal F_{i-1}$ is the maximal destabilising subsheaf of $\mathcal F/\mathcal F_{i-1}$. The quotient $\mathcal F/\mathcal F_{r-1}$ is called the \emph{minimal destabilising quotient} of $\mathcal F$. We will see in Lemma \ref{lem:dual} below that $\mu_\alpha^{\min}(\mathcal F)=\mu_\alpha(\mathcal F/\mathcal F_{r-1})$.

We will need the following well-known lemma, whose proof we include for the lack of reference.

\begin{lem}\label{lem:dual}
	Let $X$ be a normal, $\Q$-factorial, projective variety and let $\alpha$ be a non-zero movable curve class on $X$. Let $\mathcal F$ be a non-zero torsion-free coherent sheaf on $X$ and let
	$$0=\mathcal F_0\subseteq \mathcal F_1\subseteq\dots\subseteq\mathcal F_r=\mathcal F$$
	be the Harder--Narasimhan filtration of $\mathcal F$. Then the following statements hold.
	\begin{enumerate}[\normalfont (a)]
	\item We have $\mu_\alpha^{\max}(\mathcal F)=-\mu_\alpha^{\min}(\mathcal F^*)$.
	
	\item We have $\mu_\alpha(\mathcal F)\geq\mu_\alpha(\mathcal F/\mathcal F_1)$.
	
	\item We have $\mu_\alpha(\mathcal F)\geq\mu_\alpha(\mathcal F/\mathcal F_{r-1})$.
	
	\item Let $\mathcal G$ be a non-zero torsion-free coherent sheaf on $X$. If $\mu_\alpha(\mathcal F/\mathcal F_{r-1})>\mu_\alpha^{\max}(\mathcal G)$, then $\operatorname{Hom}(\mathcal F,\mathcal G)=0$.

	\item For each $k\in\{1,\dots,r\}$ we have $ \mu_\alpha^{\min}(\mathcal F_k)=\mu_\alpha(\mathcal F_k/\mathcal F_{k-1})$.
\end{enumerate}		
\end{lem}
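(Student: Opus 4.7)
The overall plan is to prove the five assertions using three recurring tools: the additivity of $\mu_\alpha$ in short exact sequences of torsion-free sheaves (the weighted-average principle), a sign-flipping duality between saturated subsheaves of $\mathcal F$ and torsion-free quotients of $\mathcal F^*$, and the defining properties of the Harder-Narasimhan graded pieces (semistability plus strictly decreasing slopes). All computations will be carried out on the smooth locus of $X$, which is a big open since $X$ is normal, so that first Chern classes behave well under exact sequences, duals and reflexive hulls.

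For part (a), I plan to establish a slope-preserving bijection modulo codimension two between saturated subsheaves $\mathcal F' \subseteq \mathcal F$ and torsion-free quotients of $\mathcal F^*$. Given saturated $\mathcal F'$, the quotient $\mathcal F/\mathcal F'$ is torsion-free, and dualising $0 \to \mathcal F' \to \mathcal F \to \mathcal F/\mathcal F' \to 0$ on the big open where both are locally free produces a torsion-free quotient of $\mathcal F^*$ of rank $\rk \mathcal F'$ and first Chern class $-c_1(\mathcal F')$, hence of slope $-\mu_\alpha(\mathcal F')$; the inverse construction dualises a torsion-free quotient and takes reflexive hulls. Since saturating a subsheaf preserves the rank while changing $c_1$ only by an effective divisor, $\mu_\alpha$ does not decrease, so the supremum defining $\mu_\alpha^{\max}(\mathcal F)$ is achieved over saturated subsheaves, giving $\mu_\alpha^{\max}(\mathcal F) = -\mu_\alpha^{\min}(\mathcal F^*)$.

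Parts (b), (c) and (e) will all flow from the weighted-average principle. For (b), the sequence $0 \to \mathcal F_1 \to \mathcal F \to \mathcal F/\mathcal F_1 \to 0$ expresses $\mu_\alpha(\mathcal F)$ as a convex combination of $\mu_\alpha(\mathcal F_1)$ and $\mu_\alpha(\mathcal F/\mathcal F_1)$; since $\mathcal F_1$ is maximal destabilising, $\mu_\alpha(\mathcal F_1) \geq \mu_\alpha(\mathcal F)$, forcing the reverse inequality on the quotient. For (c), iterating the identity along the filtration expresses $\mu_\alpha(\mathcal F)$ as a convex combination of the graded slopes $\mu_\alpha(\mathcal F_i/\mathcal F_{i-1})$, each exceeding $\mu_\alpha(\mathcal F/\mathcal F_{r-1})$ by the strict monotonicity. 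For (e), I would first observe that $0 \subseteq \mathcal F_1 \subseteq \dots \subseteq \mathcal F_k$ is itself the Harder-Narasimhan filtration of $\mathcal F_k$ by uniqueness, and then show: for any torsion-free quotient $\mathcal Q$ of $\mathcal F_k$, letting $\mathcal Q_i$ be the image of $\mathcal F_i$ in $\mathcal Q$, the torsion-free part of each $\mathcal Q_i/\mathcal Q_{i-1}$ is a quotient of the $\alpha$-semistable sheaf $\mathcal F_i/\mathcal F_{i-1}$, hence has slope at least $\mu_\alpha(\mathcal F_i/\mathcal F_{i-1}) \geq \mu_\alpha(\mathcal F_k/\mathcal F_{k-1})$, so a further weighted-average argument gives $\mu_\alpha(\mathcal Q) \geq \mu_\alpha(\mathcal F_k/\mathcal F_{k-1})$, with equality realised by $\mathcal Q = \mathcal F_k/\mathcal F_{k-1}$.

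For part (d), assuming a non-zero $\varphi \colon \mathcal F \to \mathcal G$, let $k$ be minimal with $\varphi|_{\mathcal F_k} \neq 0$, so that $\varphi$ induces a non-zero map $\mathcal F_k/\mathcal F_{k-1} \to \mathcal G$. By semistability of $\mathcal F_k/\mathcal F_{k-1}$, its image has slope at least $\mu_\alpha(\mathcal F_k/\mathcal F_{k-1}) \geq \mu_\alpha(\mathcal F/\mathcal F_{r-1})$; but as a non-zero subsheaf of $\mathcal G$, its slope is at most $\mu_\alpha^{\max}(\mathcal G)$, contradicting the hypothesis. The main technical obstacle lies in part (a): making the duality rigorous requires careful bookkeeping of torsion subsheaves, reflexive hulls and codimension two loci, so that ranks and first Chern classes (paired against the movable class $\alpha$) are preserved across the non-exactness of the dualisation functor on torsion-free sheaves.
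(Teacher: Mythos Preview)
Your proposal is correct and close in spirit to the paper's proof, but the organisation differs in one interesting way. For (a), the paper avoids setting up a full bijection and simply proves the two inequalities separately: from a torsion-free quotient $\mathcal F^*\twoheadrightarrow\mathcal Q$ it produces the inclusion $\mathcal Q^*\hookrightarrow\mathcal F^{**}$ to get $\mu_\alpha^{\max}(\mathcal F)+\mu_\alpha^{\min}(\mathcal F^*)\geq0$, and from an inclusion $\mathcal G\subseteq\mathcal F$ it uses the image of the induced map $\mathcal F^*\to\mathcal G^*$ together with \cite[Proposition 2.12]{GKP16} to get the reverse inequality. This sidesteps exactly the bookkeeping you flag as the main obstacle. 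For (b) and (d) your arguments are the same as the paper's (the paper simply cites \cite[Lemma 1.3.3]{HuyLeh10} for (d)). For (c) the paper inducts on rank via (b) rather than writing out the full convex combination, which amounts to the same thing.

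The genuine difference is in (e). You push the filtration of $\mathcal F_k$ forward to $\mathcal Q$ and bound each graded piece directly using semistability, then average. The paper instead takes the Harder--Narasimhan filtration of $\mathcal Q$, composes $\mathcal F_k\twoheadrightarrow\mathcal Q\twoheadrightarrow\mathcal Q/\mathcal Q_{m-1}$, and applies (d) to get $\mu_\alpha(\mathcal F_k/\mathcal F_{k-1})\leq\mu_\alpha^{\max}(\mathcal Q/\mathcal Q_{m-1})=\mu_\alpha(\mathcal Q/\mathcal Q_{m-1})$, then invokes (c) for $\mathcal Q$ to conclude $\mu_\alpha(\mathcal Q/\mathcal Q_{m-1})\leq\mu_\alpha(\mathcal Q)$. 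Your route is more self-contained but requires handling the possible torsion in $\mathcal Q_i/\mathcal Q_{i-1}$ carefully (you do note this); the paper's route is slicker because it recycles (c) and (d), at the cost of making the logical dependencies among the parts less linear.
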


\begin{proof}
We first show (a). Since the functor $\HHom(\,\cdot\,,\OO_X)$ is contravariant and left-exact, every torsion-free quotient $\mathcal F^*\twoheadrightarrow\mathcal Q$ gives rise to an inclusion $\mathcal Q^*\hookrightarrow\mathcal F^{**}$. Therefore,
$$\mu^{\max}_\alpha(\mathcal F)+\mu_\alpha(\mathcal Q)=\mu^{\max}_\alpha(\mathcal F^{**})-\mu_\alpha(\mathcal Q^*)\geq0\quad\text{for every }\mathcal Q,$$
hence
\begin{equation}\label{eq:first}
\mu^{\max}_\alpha(\mathcal F)+\mu_\alpha^{\min}(\mathcal F^*)\geq0.
\end{equation} 
Conversely, every inclusion $\mathcal G\subseteq\mathcal F$ of torsion-free sheaves gives rise to a map $\varphi\colon\mathcal F^*\to\mathcal G^*$, which is surjective on a big open subset of $X$ on which both $\mathcal F$ and $\mathcal G$ are locally free. Since $\mu_\alpha(\operatorname{Im}\varphi)\leq\mu_\alpha(\mathcal G^*)$ by \cite[Proposition 2.12]{GKP16}, we have
$$\mu^{\min}_\alpha(\mathcal F^*)+\mu_\alpha(\mathcal G)\leq\mu_\alpha(\operatorname{Im}\varphi)-\mu_\alpha(\mathcal G^*)\leq0\quad\text{for every }\mathcal G,$$
hence 
\begin{equation}\label{eq:second}
\mu^{\max}_\alpha(\mathcal F)+\mu_\alpha^{\min}(\mathcal F^*)\leq0.
\end{equation} 
Then (a) follows from \eqref{eq:first} and \eqref{eq:second}.

\medskip

For (b), we have $\mu_\alpha(\mathcal F_1)\geq\mu_\alpha(\mathcal F)$ since $\mathcal F_1$ is the maximal destabilising subsheaf of $\mathcal F$. An easy calculation shows that this is equivalent to $\mu_\alpha(\mathcal F)\geq\mu_\alpha(\mathcal F/\mathcal F_1)$.

\medskip

Next we show (c) by induction on the rank of $\mathcal F$. The filtration
	$$0=\mathcal F_1/\mathcal F_1\subseteq\mathcal F_2/\mathcal F_1\subseteq\dots\subseteq\mathcal F_r/\mathcal F_1=\mathcal F/\mathcal F_1$$
	is the Harder--Narasimhan filtration of $\mathcal F/\mathcal F_1$, hence by induction we have
	$$\mu_\alpha(\mathcal F/\mathcal F_1)\geq\mu_\alpha(\mathcal F/\mathcal F_{r-1}).$$
The claim now follows from the last inequality and from (b).

\medskip

The proof of (d) is the same as the proof of \cite[Lemma 1.3.3]{HuyLeh10}, by replacing the reduced Hilbert polynomial $p$ with the slope $\mu_\alpha$.

\medskip

Finally, we show (e). Fix $k\in\{1,\dots,r\}$. Then $ \mu_\alpha^{\min}(\mathcal F_k)\leq\mu_\alpha(\mathcal F_k/\mathcal F_{k-1})$ by definition. For the converse inequality, consider a non-zero torsion-free quotient $\mathcal F_k\twoheadrightarrow\mathcal Q$, and let
	$$0=\mathcal Q_0\subseteq \mathcal Q_1\subseteq\dots\subseteq\mathcal Q_m=\mathcal Q$$
	be the Harder--Narasimhan filtration of $\mathcal Q$. Then the composite $\mathcal F_k\twoheadrightarrow\mathcal Q\twoheadrightarrow\mathcal Q/\mathcal Q_{m-1}$ is a non-trivial element of $\operatorname{Hom}(\mathcal F_k,\mathcal Q/\mathcal Q_{m-1})$. Note that
	$$0=\mathcal F_0\subseteq \mathcal F_1\subseteq\dots\subseteq\mathcal F_k$$
	is the Harder--Narasimhan filtration of $\mathcal F_k$. Therefore, part (d), the fact that $\mathcal Q/\mathcal Q_{m-1}$ is $\alpha$-semistable and part (c) imply that
	$$ \mu_\alpha(\mathcal F_k/\mathcal F_{k-1})\leq\mu_\alpha^{\max}(\mathcal Q/\mathcal Q_{m-1})=\mu_\alpha(\mathcal Q/\mathcal Q_{m-1})\leq\mu_\alpha(\mathcal Q).$$
	As this holds for any such quotient $\mathcal Q$, we obtain $ \mu_\alpha(\mathcal F_k/\mathcal F_{k-1})\leq\mu_\alpha^{\min}(\mathcal F_k)$, which finishes the proof.
\end{proof}

\subsection{Notions of flatness}

In this paper we use several notions of flatness.

\begin{dfn}
	Let $X$ be a K\"ahler manifold and let $\mathcal E$ be a holomorphic vector bundle of rank $ r $ on $X$. We say that $ \mathcal{E} $ is \emph{flat} if it is defined by a representation $\pi_1(X)\to \operatorname{GL}(r)$; and that $ \mathcal E $ is \emph{hermitian flat} if it is defined by a representation $\pi_1(X)\to U(r)$, or equivalently, if it admits a flat hermitian connection, see \cite[Proposition I.4.21]{Kob87}. Note that all Chern classes of a flat vector bundle vanish by \cite[Proposition II.3.1]{Kob87}.
\end{dfn}

The following version of flatness was introduced in \cite{DPS94}.

\begin{dfn}
	Let $X$ be a normal projective variety and let $\mathcal E$ be a vector bundle on $X$. We say that $\mathcal E$ is \emph{numerically flat} if both $\mathcal E$ and $\mathcal E^*$ are nef.
\end{dfn}

\begin{rem}\label{rem:numflat_equiv}
	Let $X$ be a normal projective variety and let $\mathcal E$ be a vector bundle on $X$. Then $\mathcal E$ is numerically flat if and only if $\mathcal E$ and $\det\mathcal E^*$ are nef. Indeed, one direction is clear. Conversely, if $\mathcal E$ and $\det\mathcal E^*$ are nef, and if $\mathcal E$ is of rank $r$, then $\mathcal E^* = \bigwedge^{r-1}\mathcal E \otimes \det \mathcal E^*$ is also nef. 
\end{rem}

The next lemma gives an important source of examples for numerically flat vector bundles, which is particularly relevant for this paper.

\begin{lem} \label{lem:numflatsym} 
	Let $Y$ be a normal projective variety, let $\mathcal E$ be a vector bundle of rank $r$ on $Y$ and set $X := \mathbb P(\mathcal E)$ with the canonical projection $\pi\colon X\to Y$. Assume that there exists a positive integer $k$ and a line bundle $\mathcal L$ on $Y$ such that $c_1(\mathcal L)={-}\frac{k}{r}c_1(\mathcal E)$ and that the line bundle $\OO_{\mathbb P(\mathcal E)}(k)\otimes\pi^*\mathcal L$ on $X$ is nef. Then the vector bundle
	$ \operatorname{Sym}^k \mathcal E \otimes \mathcal L $
	on $Y$ is numerically flat.
\end{lem}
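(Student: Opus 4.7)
The plan is first to show that $c_1(\mathcal F) = 0$ for $\mathcal F := \operatorname{Sym}^k \mathcal E \otimes \mathcal L$. By the splitting principle one has $c_1(\operatorname{Sym}^k \mathcal E) = \frac{k}{r}\binom{r+k-1}{k}\, c_1(\mathcal E)$, and since $\operatorname{rk}(\operatorname{Sym}^k \mathcal E) = \binom{r+k-1}{k}$, the hypothesis $c_1(\mathcal L) = -\frac{k}{r} c_1(\mathcal E)$ yields $c_1(\mathcal F) = 0$. In particular $\det \mathcal F^*$ is numerically trivial, hence nef, so by Remark \ref{rem:numflat_equiv} it suffices to prove that $\mathcal F$ itself is nef.

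Next I would reduce to the case where $Y$ is a smooth projective curve. Nefness of a vector bundle can be checked by pullback along arbitrary morphisms from smooth projective curves, and all hypotheses of the lemma (the nefness of the tautological line bundle on $\mathbb P(\mathcal E)$ and the linear relation between $c_1(\mathcal L)$ and $c_1(\mathcal E)$) are preserved under such pullbacks. On a smooth projective curve, a vector bundle with vanishing first Chern class is nef if and only if it is semistable of slope $0$. Since twisting by a line bundle preserves semistability, and $\operatorname{Sym}^k \mathcal E$ is semistable if and only if $\mathcal E$ is (a classical fact in characteristic zero), it therefore suffices to prove that $\mathcal E$ is semistable on $Y = C$.

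The main step is to establish this semistability, arguing by contradiction. Suppose there exists a locally free quotient $\mathcal E \twoheadrightarrow \mathcal Q$ of rank $s$ with $\mu(\mathcal Q) < \mu(\mathcal E)$. The quotient induces a projective subbundle $\iota \colon \mathbb P(\mathcal Q) \hookrightarrow X = \mathbb P(\mathcal E)$ over $C$ along which $\OO_X(1)$ restricts to $\OO_{\mathbb P(\mathcal Q)}(1)$. Consequently $\iota^*\big(\OO_X(k) \otimes \pi^* \mathcal L\big) = \OO_{\mathbb P(\mathcal Q)}(k) \otimes \pi^* \mathcal L$ is nef on the $s$-dimensional variety $\mathbb P(\mathcal Q)$, so its top self-intersection is non-negative. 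Setting $\xi := c_1(\OO_{\mathbb P(\mathcal Q)}(1))$ and using the Grothendieck relation $\xi^s = c_1(\mathcal Q)\, \xi^{s-1}$ (valid because $c_i(\mathcal Q) = 0$ for $i \geq 2$ on a curve) together with $(\pi^*\mathcal L)^2 = 0$, one computes
\[ \big(k\xi + \pi^* c_1(\mathcal L)\big)^s = k^{s-1}\, \xi^{s-1} \big(k\, c_1(\mathcal Q) + s\, c_1(\mathcal L)\big), \]
and pushing forward to $C$ yields $k^{s-1}\big(k \deg \mathcal Q + s \deg \mathcal L\big) \geq 0$. Substituting $\deg \mathcal L = -k\mu(\mathcal E)$ gives $\mu(\mathcal Q) \geq \mu(\mathcal E)$, contradicting the choice of $\mathcal Q$. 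Hence $\mathcal E$ is semistable, which completes the proof.

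The step requiring the most care is the Chern-class bookkeeping in the self-intersection computation on $\mathbb P(\mathcal Q)$; the remaining ingredients (the splitting-principle identity, the reduction to curves via Hartshorne's criterion, and the equivalence between nefness and semistability of slope zero on curves) are routine.
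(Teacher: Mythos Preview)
Your proof is correct but takes a genuinely different route from the paper's. The paper's argument is a two-line application of the formalism of $\Q$-twisted vector bundles from \cite[Chapter 6]{Laz04}: the hypothesis that $\OO_{\mathbb P(\mathcal E)}(k)\otimes\pi^*\mathcal L$ is nef says precisely that the $\Q$-twisted bundle $\mathcal E\langle\tfrac{1}{k}\mathcal L\rangle$ is nef, and then \cite[Theorem 6.2.12(iii)]{Laz04} gives directly that $\operatorname{Sym}^k\mathcal E\otimes\mathcal L=\operatorname{Sym}^k\big(\mathcal E\langle\tfrac{1}{k}\mathcal L\rangle\big)$ is nef; together with the $c_1$ computation (which you carry out identically) and Remark \ref{rem:numflat_equiv}, this finishes the proof.

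Your approach instead unpacks what this means on curves: you reduce to $Y=C$ a smooth curve, translate nefness of a degree-zero bundle into semistability, and then verify directly via a self-intersection computation on $\mathbb P(\mathcal Q)$ that the nefness hypothesis forces $\mathcal E$ itself to be semistable, hence $\operatorname{Sym}^k\mathcal E$ semistable, hence $\operatorname{Sym}^k\mathcal E\otimes\mathcal L$ nef. This is more elementary and self-contained (it avoids the $\Q$-twist machinery entirely), but it is also longer and essentially reproves the special case of Lazarsfeld's result needed here. The paper's version is preferable for brevity and because it makes clear that no special feature of the situation is being used beyond the formal properties of nef $\Q$-twists; your version has the merit of being accessible without that reference.
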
   

\begin{proof}
	Since 
	$$\rk(\operatorname{Sym}^k\mathcal E)= \binom{r+k-1}{k}\quad\text{and}\quad c_1(\operatorname{Sym}^k\mathcal E)= \binom{r+k-1}{k-1}c_1(\mathcal E),$$
	we have
	\begin{equation}\label{eq:c_1a}
		c_1(\operatorname{Sym}^k \mathcal E \otimes \mathcal L)=c_1(\operatorname{Sym}^k\mathcal E)+ \binom{r+k-1}{k}c_1(\mathcal L)=0.
	\end{equation}
	Since the $\Q$-twisted vector bundle $\mathcal E\langle\frac{1}{k}\mathcal L\rangle$ is nef by assumption, the vector bundle $\operatorname{Sym}^k \mathcal E \otimes \mathcal L$ is nef by \cite[Corollary 6.1.19 and Theorem 6.2.12(iii)]{Laz04}, thus numerically flat by \eqref{eq:c_1a} and by Remark \ref{rem:numflat_equiv}.
\end{proof}

We will use the following more precise version of \cite[Theorem 1.18]{DPS94}.

\begin{thm}\label{thm:DPS94} 
	Let $X$ be a projective manifold and let $\mathcal E$ be a holomorphic vector bundle on $X$. Then $\mathcal E$ is numerically flat if and only if it admits a filtration
	$$0=\mathcal E_0\subseteq \mathcal E_1 \subseteq \ldots\subseteq\mathcal E_p=\mathcal E$$
	by vector subbundles such that the quotients $\mathcal E_k/\mathcal E_{k-1}$ for $k=1,\dots,p$ are hermitian flat and stable with respect to any ample polarisation. In particular, they are defined by irreducible unitary representations of $\pi_1(X)$.
	
	If, moreover, $\pi_1(X)$ is abelian, then each quotient $\mathcal E_k/\mathcal E_{k-1}$ is a numerically trivial line bundle.
\end{thm}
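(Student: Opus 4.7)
The plan is to start from the classical Demailly--Peternell--Schneider theorem \cite[Theorem 1.18]{DPS94}, which already establishes the existence of a filtration with hermitian flat graded pieces, and to refine both the filtration and its conclusion so that the graded pieces come from irreducible unitary representations and are stable with respect to every ample polarisation.

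For the ``if'' direction, suppose $\mathcal E$ admits such a filtration. Each graded piece is hermitian flat, so all its Chern classes vanish and both it and its dual are nef. A successive extension of nef bundles is nef, hence $\mathcal E$ and $\mathcal E^*$ are nef, i.e.\ $\mathcal E$ is numerically flat.

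For the ``only if'' direction, I would first invoke \cite[Theorem 1.18]{DPS94} to obtain a filtration $0=\mathcal F_0\subseteq\cdots\subseteq\mathcal F_q=\mathcal E$ by subbundles whose graded pieces $\mathcal F_k/\mathcal F_{k-1}$ are hermitian flat. By \cite[Proposition I.4.21]{Kob87}, each $\mathcal F_k/\mathcal F_{k-1}$ is defined by a unitary representation $\rho_k\colon\pi_1(X)\to U(r_k)$, which decomposes as a direct sum of irreducibles $\rho_k=\bigoplus_j\rho_{k,j}$. This induces a holomorphic splitting $\mathcal F_k/\mathcal F_{k-1}=\bigoplus_j\mathcal F_{k,j}$ into hermitian flat subbundles corresponding to the irreducible summands; taking preimages of the partial sums of the $\mathcal F_{k,j}$ under the quotient map $\mathcal F_k\to \mathcal F_k/\mathcal F_{k-1}$ then refines the DPS filtration to one whose graded pieces are defined by irreducible unitary representations of $\pi_1(X)$.

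The remaining task is to check that each such graded piece $\mathcal E_k/\mathcal E_{k-1}$ is stable with respect to every ample polarisation. Since it is hermitian flat, it carries a flat hermitian connection whose curvature vanishes identically. Consequently it satisfies the Hermitian--Einstein equation (with zero Einstein constant) with respect to \emph{every} K\"ahler class on $X$, and by the Kobayashi--Hitchin correspondence (Donaldson; Uhlenbeck--Yau) it is polystable with respect to every such class; irreducibility of the underlying unitary representation precludes any nontrivial polystable decomposition and thus promotes polystability to stability. If moreover $\pi_1(X)$ is abelian, every irreducible unitary representation is one-dimensional, so each graded piece is a hermitian flat line bundle with trivial first Chern class, hence numerically trivial. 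I expect the main obstacle to be precisely the stability-for-every-polarisation step, which genuinely relies on the analytic input of Kobayashi--Hitchin rather than purely algebraic arguments; once this is in hand, the rest of the proof is essentially bookkeeping around the classical DPS result.
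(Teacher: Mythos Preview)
Your argument is correct and lands in the same place as the paper's, but the two proofs are organised in opposite directions. The paper does not refine the DPS filtration via representation theory; instead it invokes the \emph{proof} of \cite[Theorem 1.18]{DPS94} to obtain, for a fixed ample polarisation $H$, a filtration whose graded pieces are already $H$-stable and hermitian flat, and then appeals to \cite[Theorem 5.1 and Remark 5.2]{MR84} to conclude that each graded piece comes from an irreducible unitary representation (whence stability with respect to every polarisation is implicit in the correspondence). You, by contrast, take only the \emph{statement} of DPS as a black box, decompose each hermitian flat graded piece into irreducibles on the representation-theoretic side first, and then run Kobayashi--Hitchin to recover stability for every K\"ahler class. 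Your route is more modular, since it avoids unpacking the DPS argument; the paper's route is slightly shorter once one is willing to cite the internal structure of that proof. One small point worth making explicit in your write-up: the passage ``irreducibility precludes any nontrivial polystable decomposition'' is most cleanly justified by noting that an irreducible unitary representation gives a \emph{simple} bundle (holomorphic sections of $\mathcal End$ are parallel, hence scalars by Schur), and a simple polystable bundle is stable.
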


\begin{proof}
	The existence of a filtration as above whose quotients are hermitian flat is \cite[Theorem 1.18]{DPS94}. If we fix any ample polarisation $H$, then we may assume that each quotient $\mathcal E_k/\mathcal E_{k-1}$ is $H$-stable by the proof of that result. By hermitian flatness, all the Chern classes of those quotients vanish, hence the statement about representations follows from \cite[Theorem 5.1 and Remark 5.2]{MR84}. For the last statement, if the group $\pi_1(X)$ is abelian, then each such representation is $1$-dimensional by \cite[Corollary 2.7.17]{Kow14}, hence each sheaf $\mathcal E_k/\mathcal E_{k-1}$ has rank one.
\end{proof}

\subsection{Rational chain connectedness and uniruledness}\label{subsec:RCC}

Here we adopt the definition that a variety $X$ is \emph{rationally chain connected} if any two very general points on $X$ can be joined by a chain of rational curves, and it is \emph{rationally connected} if any two very general points on $X$ can be joined by a rational curve. Over $\C$, these are equivalent to more technical definitions of rational (chain) connectedness by \cite[Proposition IV.3.6]{Kol96}. If $X$ is additionally proper, then it is rationally chain connected if and only if \emph{any two points on $X$} can be joined by a chain of rational curves, see \cite[Corollary IV.3.5]{Kol96}.

By \cite[Corollary 1.5]{HM07a}, if $(X,\Delta)$ is a rationally chain connected dlt pair and if $Y$ is any desingularisation of $X$, then both $X$ and $Y$ are rationally connected. Since $X$ has rational singularities by \cite[Theorem 5.22]{KM98}, this implies that
\begin{equation}\label{eq:557}
H^i(X,\OO_X)=0\quad\text{for all }i>0
\end{equation}
by \cite[Corollary IV.3.8]{Kol96} and Hodge theory. In particular,
\begin{equation}\label{eq:557a}
\chi(X,\OO_X)=1.
\end{equation}

If $X$ is a normal proper variety, then it admits a dominant almost holomorphic map $\pi\colon X \dashrightarrow Z$, called  \emph{maximal rationally chain connected} or \emph{MRCC fibration}, to a smooth variety such that the complete fibres of $\pi$ are rationally chain connected and for a very general fibre $F$ of $\pi$, any rational curve in $X$ which intersects $F$ lies in $F$, see \cite[Section IV.5]{Kol96}. We include the following lemma for the lack of reference.

\begin{lem}\label{lem:MRCC}
	Let $(X,\Delta)$ be a dlt pair and let $\pi\colon X \dashrightarrow Z$ be an MRCC fibration of $X$. Then $Z$ is not uniruled.
\end{lem}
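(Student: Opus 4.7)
The plan is to argue by contradiction. Assume that $Z$ is uniruled, and fix a rational curve $C\subseteq Z$ through a very general point $z\in Z$. Set $F := \pi^{-1}(z)$; by the MRCC property, $F$ is rationally chain connected and every rational curve in $X$ meeting $F$ is contained in $F$. I first observe that this property strengthens to a maximality statement: any rationally chain connected subvariety $T\subseteq X$ containing $F$ must equal $F$, since otherwise a chain of rational curves in $T$ joining some point of $T\setminus F$ to a point of $F$ would contain a link which is a rational curve meeting $F$ but not contained in $F$, a contradiction.

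The core of the plan is therefore to construct an RCC subvariety of $X$ strictly containing $F$. For this, take a log resolution $f\colon Y\to X$ of $(X,\Delta)$ on which the rational map $\pi\circ f$ extends to a morphism $g\colon Y\to Z$. A very general fibre of $g$ is a desingularisation of a very general complete fibre of $\pi$; since $(X,\Delta)$ is dlt and such a fibre is rationally chain connected by hypothesis, \cite[Corollary~1.5]{HM07a} implies that this fibre of $g$ is rationally connected. Restricting to $C$, the morphism $g^{-1}(C)\to C\cong\mathbb P^1$ has rationally connected general fibre, so the Graber--Harris--Starr theorem provides a section. Combining this section with the rational connectedness of the fibres shows that $g^{-1}(C)$ is itself rationally chain connected. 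Consequently, the image $f\bigl(g^{-1}(C)\bigr)\subseteq X$ is rationally chain connected; it contains $F$ and strictly contains it, since $C$ meets $Z$ in points other than $z$ whose $\pi$-preimages are disjoint from $F$. This contradicts the maximality above and completes the proof.

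The main technical obstacle I anticipate lies in justifying the application of \cite[Corollary~1.5]{HM07a} to a very general complete fibre of $\pi$: one must verify that such a fibre inherits a dlt structure from $(X,\Delta)$ and is rationally chain connected with respect to it. A possible way to bypass this is to compare the MRC fibration of the smooth resolution $Y$ directly with the MRCC fibration of $X$; since rational curves lift along $f$ and fibres of $f$ over dlt singularities are rationally chain connected, the RCC equivalence relation on $X$ matches the RC equivalence relation on $Y$, so that the two bases coincide birationally, and the non-uniruledness of $Z$ follows directly from the Graber--Harris--Starr theorem applied to the smooth projective variety $Y$.
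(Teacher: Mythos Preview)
Your approach is essentially the paper's: both pass to a resolution, invoke \cite[Corollary 1.5]{HM07a} to show that the very general fibre of the induced map is rationally connected, and then use Graber--Harris--Starr to produce a rational curve escaping that fibre, whose image in $X$ contradicts the MRCC property. The only difference is that the paper cites \cite[Corollary 1.4]{GHS03} directly (the base of the MRC fibration of a smooth variety is not uniruled) rather than constructing a section over $C$ by hand---which is precisely the alternative you sketch in your final paragraph---and, like you, it simply asserts the dlt adjunction to the general fibre without further comment.
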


\begin{proof}
Let $\sigma\colon X' \to X$ be a desingularisation and let $\pi'\colon X'\dashrightarrow Z$ be the induced rational map. Then there are very general fibres $F'$ of $\pi'$ and $F$ of $\pi$ such that the map $\sigma_{F'}\colon F'\to F$ is a desingularisation. Note that $(F,\Delta|_F)$ is dlt, hence $F'$ is rationally connected as above.

Assume that $Z$ is uniruled. Then $\pi'$ is not an MRCC fibration by \cite[Corollary 1.4]{GHS03}. In particular, we may assume that there exists a rational curve in $X'$ which intersects $F'$ but does not lie in $F'$. Therefore, the image of this curve is a rational curve in $X$ which meets $F$ but is not contained in $F$, a contradiction.
\end{proof}

The lemma implies, in particular, that $\dim Z=\dim X$ if and only if $X$ is not uniruled, and note that $Z$ is a point if and only if $X$ is rationally chain connected. In the context of the lemma, since the complete fibres of $\pi$ are rationally connected, we say that $\pi$ is a \emph{maximal rationally connected fibration} or \emph{MRC fibration} of $X$.

The following very useful generalisation of \cite[Theorem 11]{KL09} implies that a variety with nef anticanonical sheaf is almost always uniruled.

\begin{lem}\label{lem:nefanticanimpliesuniruled}
	Let $ (X,\Delta) $ be a projective pair such that $ {-}(K_X+\Delta) $ is pseudoeffective. Then $X$ is not uniruled if and only if $X$ is canonical, $\Delta=0$ and $K_X\sim_\Q0$.
\end{lem}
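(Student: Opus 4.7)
The plan is to establish both directions via a log resolution and BDPP. The direction $(\Leftarrow)$ is straightforward: if $X$ is canonical with $\Delta=0$ and $K_X\sim_\Q 0$, then on any resolution $\pi\colon\tilde X\to X$ the divisor $K_{\tilde X}$ is $\Q$-linearly equivalent to an effective $\pi$-exceptional $\Q$-divisor, so $K_{\tilde X}$ is pseudoeffective, and hence $\tilde X$ (and thus $X$) is not uniruled by BDPP.

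For the non-trivial direction, I would take a log resolution $\pi\colon\tilde X\to X$ and use the discrepancy relation $K_{\tilde X}+\tilde\Delta\sim_\Q\pi^*(K_X+\Delta)+E$, with $E$ a $\pi$-exceptional $\Q$-divisor. To show $\Delta=0$: BDPP applied to the smooth non-uniruled $\tilde X$ gives that $K_{\tilde X}$ is pseudoeffective; combined with the pullback of $-(K_X+\Delta)$ being pseudoeffective, the sum $E-\tilde\Delta$ is pseudoeffective on $\tilde X$. Intersecting with $(\pi^*H)^{n-1}$ for an ample divisor $H$ on $X$ and applying the projection formula (using that $E$ is $\pi$-exceptional) yields $-\Delta\cdot H^{n-1}\geq 0$, which, together with the effectivity of $\Delta$, forces $\Delta=0$.

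Next, with $\Delta=0$, the divisor $K_X$ is $\Q$-Cartier, and to show $K_X\equiv 0$ I would take any movable curve class $\alpha$ on $X$ and lift it to a movable class $\tilde\alpha$ on $\tilde X$ via strict transforms of general members of a family representing $\alpha$. Since $\pi(\Supp E)$ has codimension at least two in $X$, a general such lift avoids $\Supp E$, so $E\cdot\tilde\alpha=0$; the projection formula then gives $K_X\cdot\alpha=\pi^*K_X\cdot\tilde\alpha=K_{\tilde X}\cdot\tilde\alpha\geq 0$, so $K_X$ is pseudoeffective on $X$. Combined with $-K_X$ being pseudoeffective, the strict convexity of the pseudoeffective cone on the projective variety $X$ (dually, the movable cone of curves has non-empty interior, containing the ample curve class $H^{n-1}$) then forces $K_X\equiv 0$.

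Finally, $K_X\equiv 0$ yields $K_{\tilde X}\equiv E$, exhibiting $E$ as a pseudoeffective $\pi$-exceptional $\Q$-divisor. The main obstacle—and the technical heart of the argument—is concluding that such an $E$ is automatically effective; this is the standard negativity of exceptional divisors, proved via the Nakayama divisorial Zariski decomposition, where the positive part of $E$ must vanish as it would be a movable pseudoeffective class supported only on the $\pi$-exceptional locus. Once $E\geq 0$, every discrepancy $a(E_i,X,0)$ is non-negative, so $X$ is canonical; in particular $(X,0)$ is klt, and Kawamata's theorem on klt varieties with numerically trivial canonical class then yields $K_X\sim_\Q 0$.
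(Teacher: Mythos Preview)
Your proof is correct and follows the same overall architecture as the paper's: reduce to $\Delta=0$, show $K_X$ is pseudoeffective via BDPP on a resolution, deduce $K_X\equiv 0$, then use negativity of the exceptional divisor to get canonical singularities, and finish with Kawamata's theorem. The step $E\geq 0$ is exactly what the paper does, citing a result of Lazarsfeld in \cite{KL09} rather than phrasing it via Nakayama's $\sigma$-decomposition.

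The one genuinely different step is how you obtain $\Delta=0$. The paper argues by contradiction via Miyaoka--Mori \cite{MM86}: if $\Delta\neq 0$, a general complete intersection curve $C$ lies in the smooth locus, and $\deg\OO_C(K_X)<\deg\OO_C(K_X+\Delta)\leq 0$, so $X$ is uniruled. Your route is purely numerical: you use BDPP on the resolution to get $K_{\tilde X}$ pseudoeffective, hence $E-\tilde\Delta$ pseudoeffective, and then pair against the nef class $(\pi^*H)^{n-1}$. This avoids invoking bend-and-break a second time, which is a small conceptual gain; on the other hand, the paper's argument is more elementary in that it does not need BDPP for this step. Your argument for $K_X$ pseudoeffective via lifting movable curves is also correct but more laborious than necessary: the paper simply observes $K_X\sim_\Q\pi_*K_{\tilde X}$, and the pushforward of a pseudoeffective class under a birational morphism is pseudoeffective.
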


\begin{proof}
	Assume that $X$ is canonical with $K_X\sim_\Q0$, and that $X$ is uniruled. If $\pi\colon X'\to X$ is a desingularisation, then $K_{X'}$ is pseudoeffective and $X'$ is uniruled, which is impossible since $X'$ possesses a free rational curve through a general point, see \cite[Section 4.2]{Deb01}. This proves one direction.
	
	Conversely, assume that $X$ is not uniruled. Assume for contradiction that $\Delta\neq0$, and let $C$ be the intersection of $\dim X-1$ general very ample divisors on $X$. Then $C$ is a smooth curve which avoids the singular locus of $X$, and clearly $\deg\OO_C(K_X+\Delta)\leq0$ and $\deg\OO_C(\Delta)>0$. Therefore, $\deg\OO_C(K_X)<0$ and thus $X$ is uniruled by \cite[Corollary 2]{MM86}, a contradiction.
	
	Therefore, we have $\Delta=0$ and, in particular, $K_X$ is $\Q$-Cartier. If $\pi\colon X'\to X$ is a desingularisation, then we may write
	$$K_{X'} \sim_\Q \pi^*K_X+E,$$
	where the divisor $E$ is $\pi$-exceptional. Since $X'$ is not uniruled, the divisor $K_{X'}$ is pseudoeffective by \cite[Corollary 0.3]{BDPP}, hence $K_X\sim_\Q\pi_*K_{X'}$ is pseudoeffective. Since $ {-}K_X $ is pseudoeffective by assumption, this implies $K_X\equiv0$ and $K_{X'}\equiv E$, but then a result of Lazarsfeld \cite[Corollary 13]{KL09} shows that $E\geq0$, and hence $X$ is canonical. Finally, $K_X$ is torsion by \cite[Theorem 8.2]{Kaw85b}.
\end{proof}

\subsection{Varieties with nef anticanonical sheaf}

We work almost always with pairs $(X,\Delta)$, where $\Delta$ is a $\Q$-divisor. However, as mentioned in the introduction, most results of this paper hold for pairs $(X,\Delta)$, where $\Delta$ is an $\R$-divisor. The following remark explains this.

\begin{rem}\label{rem:R-divisors}
	Let $(X,\Delta)$ be a projective, $\Q$-factorial, log canonical pair such that ${-}(K_X+\Delta)$ is nef and $\Delta$ is an $\R$-divisor, and let $L$ be a nef $\Q$-divisor on $X$. If we are interested in the question whether ${-}(K_X+\Delta)$ or $L$ is num-effective, we may pass to a dlt blowup of $(X,\Delta)$ by Theorem \ref{thm:dltblowup}(b) and by Lemma \ref{lem:descenteff}. In particular, we may assume that $X$ is klt. Then by \cite[Corollary 3.6]{HanLiu21} there exists positive real numbers $r_1,\dots,r_k$ and $\Q$-divisors $\Delta_1,\dots,\Delta_k$ such that $r_1+\dots+r_k=1$, all pairs $(X,\Delta_i)$ are log canonical, all divisors ${-}(K_X+\Delta_i)$ are nef and $K_X+\Delta=\sum_{i=1}^kr_i(K_X+\Delta_i)$. In particular, if Theorem \ref{thm:mainthm_dim=3} holds for rational boundaries, then it holds also for real boundaries.
\end{rem}

Next, we recall the important definition of locally constant fibrations \cite[Definition 2.3]{MW21}. The inclusion of (b) in the definition below is justified by the proof of \cite[Proposition 2.5]{MW21}.

\begin{dfn}\label{m-defi}
	Let $f\colon X \to Y$ be a fibration between normal varieties and let $\Delta$ be a Weil $\Q$-divisor on $X$. Then $f$ is a \emph{locally constant fibration with respect to $(X,\Delta)$} if the following hold:
	\begin{enumerate}[\normalfont (a)]
		\item $f$ is an analytically locally trivial morphism with the fibre $F$,
		
		\item the fibre product $\Unv{Y} \times_{Y} X  $ is isomorphic to the product $\Unv{Y} \times F$ over the universal cover $\Unv{Y}$ of $Y$,
		
		\item every component of $\Delta$ is horizontal over $Y$,
		
		\item there exist a $\pi_1(Y)$-invariant Weil $\Q$-divisor $\Delta_F$ on F and a representation 
		$$ \rho \colon \pi_1(Y)\to\Aut(F) $$
		such that $(X,\Delta)$ is isomorphic to the quotient $(\Unv{Y}\times F,\operatorname{pr}_F^*\Delta_F)/\pi_1(Y)$ over $Y$, where $\operatorname{pr}_F\colon \Unv{Y}\times F\to F$ is the second projection and $\pi_1(Y)$ acts diagonally on $\Unv{Y}\times F$ by
		$$ \gamma  \cdot(y,z):=\big(\gamma  \cdot y, \rho(\gamma  )(z)\big) \quad\text{for }\gamma\in\pi_1(Y)\text{ and }(y,z) \in \Unv{Y}\times F.$$
	\end{enumerate}
\end{dfn}

We now come to the following very recent structure theorem from \cite{MW21,EIM23} for varieties with nef anticanonical class, which is fundamental for our work.

\begin{thm}\label{thm:MW}
	Let $ (X,\Delta) $ be a projective klt pair such that $ {-}(K_X+\Delta) $ is nef. Then:
	\begin{enumerate}[\normalfont (a)]
		\item there exists a quasi-\'etale cover $ \mu \colon X' \to X $ such that $ X' $ admits a locally constant MRC fibration $ \pi \colon X' \to Z $ with respect to $(X',\mu^*\Delta)$, and $Z$ has canonical singularities and is a product of an abelian variety, singular Calabi-Yau varieties and singular irreducible holomorphic symplectic varieties as in \cite[Definition 8.16]{GKP16b},
		
		\item if $F$ is the fibre of $\pi$, then
		$$\nu\big(F,{-}(K_F+\Delta|_F)\big)=\nu\big(X,{-}(K_X+\Delta)\big).$$
	\end{enumerate}
\end{thm}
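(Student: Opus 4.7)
The plan is to combine the MRC fibration machinery with positivity theorems for direct images and then to invoke the singular Beauville--Bogomolov decomposition; the overall strategy follows \cite{MW21,EIM23}. First I would take an MRC fibration of $X$; by Lemma \ref{lem:MRCC} applied to a dlt blowup, the base $Z$ is not uniruled. Since the general fibre is rationally connected and ${-}(K_X+\Delta)$ is nef, a positivity of direct images result in the spirit of Cao--H\"oring forces ${-}K_Z$ to be pseudoeffective on a resolution, and combined with \cite[Corollary 0.3]{BDPP} this yields $K_Z\equiv 0$. A canonical bundle formula then transports the klt condition on $(X,\Delta)$ to a klt structure on $Z$ with $K_Z\equiv 0$, and abundance for klt pairs with numerically trivial canonical class upgrades this to $K_Z\sim_\Q 0$.

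With $K_Z$ torsion, I would pass to a quasi-\'etale index-one cover so that $K_Z\sim 0$, and then invoke the singular Beauville--Bogomolov decomposition of Greb--Guenancia--Kebekus and Druel: after a further quasi-\'etale cover, $Z$ splits as a product of an abelian variety, singular Calabi--Yau factors, and singular irreducible holomorphic symplectic factors in the sense of \cite[Definition 8.16]{GKP16b}. Pulling this composite cover back to $X$ produces $\mu\colon X'\to X$, and the induced map $\pi\colon X'\to Z$ is the candidate MRC fibration.

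The hardest step, which I expect to be the principal obstacle, is to upgrade $\pi$ to a \emph{locally constant} fibration in the sense of Definition \ref{m-defi}. The argument runs in two stages. First, I would prove analytic local triviality by showing the vanishing of Kodaira--Spencer classes: this uses the existence of singular Ricci-flat K\"ahler--Einstein metrics on $Z$ via the singular Calabi conjecture, together with hermitian flatness of suitable direct image sheaves coming from the nefness of ${-}(K_{X'/Z}+\mu^*\Delta)$ and the triviality of $K_Z$. Second, I would identify the monodromy: the attached local system produces a representation $\rho\colon\pi_1(Z)\to\Aut(F)$, and one must verify that $\mu^*\Delta$ descends to a $\pi_1(Z)$-invariant divisor $\Delta_F$ on $F$, yielding the isomorphism $(X',\mu^*\Delta)\simeq(\Unv{Z}\times F,\operatorname{pr}_F^*\Delta_F)/\pi_1(Z)$ required by the definition. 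The delicate point is that each of the classical smooth-case inputs (Hodge theory, harmonic maps, Bogomolov--Miyaoka--Yau type estimates) must be extended to the klt setting where $Z$ may have singularities.

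For part (b), pulling back ${-}(K_X+\Delta)$ to a general fibre gives the inequality $\nu\bigl(F,{-}(K_F+\Delta|_F)\bigr)\leq\nu\bigl(X,{-}(K_X+\Delta)\bigr)$. For the reverse inequality, the local constancy of $\pi$ together with $K_Z\sim_\Q 0$ implies that on the universal cover $\Unv{Z}\times F$ one has ${-}(K_{X'}+\mu^*\Delta)\equiv\operatorname{pr}_F^*\bigl({-}(K_F+\Delta_F)\bigr)$ numerically; expanding self-intersections factor by factor forces numerical triviality of high powers of ${-}(K_{X'}+\mu^*\Delta)$ as soon as the corresponding powers vanish on the fibre. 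Invariance of numerical dimension under the quasi-\'etale cover $\mu$, via Lemma \ref{lem:descenteff}(c), then transfers the equality back to $X$.
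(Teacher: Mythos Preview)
The paper does not prove this theorem from scratch: part (a) is quoted as \cite[Corollary 1.2]{MW21}, with the only additional input being the base change along the quasi-\'etale cover furnished by the singular Beauville--Bogomolov decomposition \cite[Theorem 1.5]{HP19}; part (b) is a direct citation of \cite[Theorem 1.5]{EIM23}. Your proposal instead attempts to sketch the \emph{content} of those cited theorems. For (a) your outline is broadly faithful to the strategy of \cite{MW21}, and you correctly flag the passage from an MRC fibration to a locally constant one as the hard analytic core; there is nothing wrong there beyond the fact that you are reproving a black box the paper is happy to import.

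For (b), however, your argument has a genuine gap. You want to deduce $(-(K_{X'}+\mu^*\Delta))^{m+1}\equiv 0$ on $X'$ from $(-(K_F+\Delta_F))^{m+1}\equiv 0$ on $F$ by passing to $\Unv{Z}\times F$ and using $\beta^*(-(K_{X'}+\mu^*\Delta))\sim_\Q q^*(-(K_F+\Delta_F))$. But $\Unv{Z}$ is non-compact whenever $\pi_1(Z)$ is infinite (e.g.\ when $Z$ has an abelian factor), so neither numerical equivalence nor intersection numbers are available there, and nothing descends automatically to $X'$. Worse, the statement $\nu(X',L)=\nu(F,L|_F)$ is \emph{false} for an arbitrary nef $L$ on a locally trivial fibration (think of the diagonal on a product of elliptic curves), so any valid argument must exploit that $L=-(K_{X'/Z}+\mu^*\Delta)$ specifically. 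The proof in \cite{EIM23} (building on \cite{CCM21}) does exactly this, via positivity of direct images and numerical flatness of suitable sheaves $\pi_*\OO_{X'}(mL)$; your intersection-theoretic shortcut does not substitute for that input. Also, Lemma \ref{lem:descenteff}(c) concerns num-effectivity, not numerical dimension; the invariance of $\nu$ under quasi-\'etale covers is standard but is not that lemma.
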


\begin{proof}
	Part (a) is \cite[Corollary 1.2]{MW21}, which improves on \cite[Corollary 1.3]{CH19}. For completeness, we recall its proof: by \cite[Theorem 1.1]{MW21} there exists a normal projective variety $ \widetilde X $ with a quasi-\'etale cover $ \theta\colon\widetilde X \to X $ such that $ \widetilde X $ admits a locally constant fibration $ \widetilde X \to \widetilde Z $ with respect to $(\widetilde{X},\theta^*\Delta)$, where $ \widetilde Z $ is a projective klt variety with $K_{\widetilde Z}\equiv0$. By \cite[Theorem 1.5]{HP19} there exists a finite quasi-\'etale morphism $Z\to \widetilde Z$ such that $Z$ has canonical singularities and is a product of an abelian variety, singular Calabi-Yau varieties and singular irreducible holomorphic symplectic varieties. We set $X':=\widetilde X\times_{\widetilde Z} Z$. 
	
	Part (b) follows from \cite[Theorem 1.5]{EIM23}, which improves on \cite[Theorem 1.2]{CCM21}. 
\end{proof}

\subsection{Shafarevich maps}

We will need the concept of Shafarevich maps \cite[Definition 3.5]{Kol95} in Section \ref{sec:reduction}.

\begin{dfn}\label{dfn:Shafarevich}
Let $X$ be a compact K\"ahler space and let $H$ be a normal subgroup of $\pi_1(X)$. Then there exists a K\"ahler manifold $\operatorname{Sh}^H(X)$ and an almost holomorphic map $\operatorname{sh}^H_X\colon X \dashrightarrow \operatorname{Sh}^H(X)$, called the \emph{$H$-Shafarevich variety} and the \emph{$H$-Shafarevich map} of $X$ respectively, such that there are countably many proper subvarieties $D_i\subseteq X$ with the property that, for a subvariety $Z\subseteq X$ with $Z \not \subseteq \cup_{i} D_{i}$, the normalisation $Z^n$ of $Z$ satisfies that the set $ \operatorname{sh}^H_X(Z)$ is a point if and only if $\Image\big(\pi_1(Z^n) \to\pi_1(X)\big)\cap H$ has finite index in $\Image\big(\pi_1(Z^n) \to\pi_1(X)\big)$. A $H$-Shafarevich variety is defined up to birational equivalence.
\end{dfn}

\subsection{Generalised Nonvanishing on surfaces}

The main result of \cite{HanLiu20} is that the Generalised Nonvanishing Conjecture holds for surfaces. It is crucial for the proofs in this paper. The following result is \cite[Theorem 1.5 and Corollary 1.6]{HanLiu20}.

\begin{thm}\label{thm:GenNonvanSurfaces} ~
	\begin{enumerate}[\normalfont (a)]
		\item Let $(X,B +M)$ be a projective, log canonical, NQC surface g-pair such that $K_X + B +M$ is pseudoeffective. Then $K_X + B +M$ is num-effective.

		\item Let $(X,B)$ be a projective, log canonical, surface pair such that the divisor ${-}(K_X + B)$ is nef. Then $\kappa\big(X,{-}(K_X+B)\big)\geq0$.
	\end{enumerate}
\end{thm}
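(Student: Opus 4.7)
The plan for part (a) is to first reduce to the case when $K_X+B+M$ is itself nef. Since $K_X+B+M$ is pseudoeffective, one runs a $(K_X+B+M)$-MMP with scaling of an ample divisor using the Cone and Contraction theorems for NQC log canonical generalised pairs from \cite{HaconLiu21}. In dimension two this MMP terminates, each step is a $(K_X+B+M)$-nonpositive $\Q$-factorial birational contraction, and num-effectivity descends through such contractions by Lemma \ref{lem:MMPnumeff}. Hence one may assume that $K_X+B+M$ is already nef.

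A case analysis on the numerical dimension $\nu := \nu(X,K_X+B+M) \in \{0,1,2\}$ then reduces the problem to a single case. If $\nu=0$, then $K_X+B+M\equiv 0$ and the claim is trivial; if $\nu=2$, then $K_X+B+M$ is nef and big, hence $\Q$-linearly equivalent to an effective divisor and in particular num-effective. The essential case is $\nu=1$, and this is where I expect the main obstacle. Here $K_X+B+M$ is a nef class of self-intersection zero which is not numerically trivial. The strategy is to pass to a birational model $X'\to X$ on which $M$ descends as a genuine nef $\Q$-divisor $M'$, and then exploit the rigidity of such classes on surfaces coming from the Hodge index theorem: up to numerical equivalence $K_{X'}+B'+M'$ should be proportional to a fibre class of some morphism $X'\to C$ onto a smooth curve. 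The technical difficulty is absorbing $M$ into the boundary of a usual log canonical pair without destroying singularities so that the classical surface Nonvanishing theorem \cite{KMM94} applies; one natural approach is to write $M' \sim_{\Q} \frac{1}{k} N$ for a general member $N \in |kM'|$ when $|kM'|$ is non-empty, and then apply classical Nonvanishing to $(X', B' + \frac{1}{k}N)$, with a perturbation argument when the nef part is not $\Q$-effective on the nose.

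For part (b), if $X$ is not uniruled, then Lemma \ref{lem:nefanticanimpliesuniruled} applied to the pseudoeffective divisor ${-}(K_X+B)$ forces $B=0$ and $K_X\sim_{\Q} 0$, so $\kappa\big(X,{-}(K_X+B)\big)=0$ trivially. Assuming $X$ is uniruled, part (a) yields num-effectivity of ${-}(K_X+B)$ but this is tautological since $K_X+B+({-}(K_X+B))=0$, so a separate argument is needed to upgrade numerical effectivity to non-negative Iitaka dimension. I would run a $(K_X+B)$-MMP to reach a Mori fibre space: either a log del Pezzo-type surface of Picard number one, in which case ${-}(K_X+B)$ is ample and hence $\Q$-effective, or a fibration onto a smooth curve whose general fibre is a rational curve, in which case an explicit analysis of the restriction of ${-}(K_X+B)$ to fibres combined with relative base-point-freeness over the base curve produces a global section of a sufficiently divisible multiple. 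Pulling back along the MMP preserves non-negativity of the Iitaka dimension, and this classification-based argument on the Mori fibre space is the main additional input needed beyond part (a).
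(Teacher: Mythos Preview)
The paper does not prove this theorem at all: it is quoted verbatim as \cite[Theorem 1.5 and Corollary 1.6]{HanLiu20} and used as a black box. So there is no ``paper's own proof'' to compare against; the relevant question is only whether your sketch stands on its own.

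For part (a), your reduction to the nef case via the MMP for NQC g-pairs and Lemma \ref{lem:MMPnumeff} is fine, as is the disposal of $\nu\in\{0,2\}$. The $\nu=1$ step, however, has a genuine gap. You propose to ``absorb $M$ into the boundary'' by writing $M'\sim_\Q \tfrac{1}{k}N$ for a general $N\in|kM'|$ and then applying classical surface Nonvanishing to $(X',B'+\tfrac{1}{k}N)$. But the existence of such an $N$ is precisely a form of effectivity for the nef part, which you have not established; the unspecified ``perturbation argument'' does not close this circle. The actual argument in \cite{HanLiu20} for the $\nu=1$ case is substantially more delicate (Zariski decomposition and a careful case analysis on the minimal model of the surface) and cannot be reduced to a one-line reduction to \cite{KMM94}.

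For part (b), your non-uniruled reduction via Lemma \ref{lem:nefanticanimpliesuniruled} is correct. The uniruled case, however, fails at the final step. If $f\colon X\to Y$ is a divisorial step of a $(K_X+B)$-MMP contracting a curve $E$, then $K_X+B=f^*(K_Y+B_Y)+aE$ with $a>0$, hence $-(K_X+B)=f^*\big({-}(K_Y+B_Y)\big)-aE$. Thus sections of $-m(K_Y+B_Y)$ do \emph{not} in general pull back to sections of $-m(K_X+B)$: the Iitaka dimension of the anticanonical divisor can drop when you undo a $(K_X+B)$-negative contraction. Your sentence ``pulling back along the MMP preserves non-negativity of the Iitaka dimension'' is therefore false in this direction, and the argument does not recover $\kappa\big(X,{-}(K_X+B)\big)\ge 0$ from the Mori fibre space.
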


\section{Subsheaves of tensor powers of the cotangent sheaf}\label{sec:subsheaves}

In this section we first prove the following small generalisation of one of the main results of the paper \cite{Ou23}, following very closely the proof of \cite[Theorem 1.4]{Ou23}.

\begin{thm}\label{thm:Ou}
	Let $(X,\Delta)$ be a projective, $\Q$-factorial, log canonical pair such that ${-}(K_X+\Delta)$ is nef. Let $m$ be a positive integer and let $\mathcal M$ be a saturated subsheaf of $\big(\Omega_X^{[1]}\big)^{\boxtimes m}$. Then ${-}c_1(\mathcal M)$ is pseudoeffective.
\end{thm}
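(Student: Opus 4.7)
The plan is to argue by contradiction, following the strategy of Ou's proof of \cite[Theorem 1.4]{Ou23} for klt pairs and adjusting to the log canonical setting. Suppose that ${-}c_1(\mathcal M)$ is not pseudoeffective. By the duality between the pseudoeffective cone of divisors and the movable cone of curve classes \cite{BDPP}, there is a non-zero movable curve class $\alpha$ on $X$ with $\mu_\alpha(\mathcal M)>0$. Using that in characteristic zero tensor products of $\alpha$-semistable torsion-free reflexive sheaves remain $\alpha$-semistable, this positivity transfers from $(\Omega_X^{[1]})^{\boxtimes m}$ down to $\Omega_X^{[1]}$ itself, producing a maximal destabilising saturated subsheaf $\mathcal F \subseteq \Omega_X^{[1]}$ with $\mu_\alpha(\mathcal F)>0$.

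Dualising the inclusion $\mathcal F \hookrightarrow \Omega_X^{[1]}$ yields a generically surjective morphism $T_X^{[1]} \to \mathcal F^*$; the saturated kernel $\mathcal K \subseteq T_X^{[1]}$ of this morphism satisfies
\[ c_1(\mathcal K)\cdot\alpha = c_1(T_X^{[1]})\cdot\alpha - c_1(\mathcal F^*)\cdot\alpha = \big({-}(K_X+\Delta)+\Delta\big)\cdot\alpha + c_1(\mathcal F)\cdot\alpha > 0, \]
by nefness of ${-}(K_X+\Delta)$, effectivity of $\Delta$, and the positivity of $\mathcal F$. Thus $T_X^{[1]}$ contains a saturated subsheaf of positive slope with respect to a movable class, and the singular Campana--Paun integrability and algebraic integrability theorem then forces $\mathcal K$ to define an algebraically integrable foliation whose general leaves are rationally connected. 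The resulting dominant rational map $\varphi\colon X \dashrightarrow Z$ has general fibres $F$ which are rationally connected and on which ${-}(K_F+\Delta|_F)$ is nef; a degree computation on these fibres, combined with the behaviour of the Harder--Narasimhan filtration under restriction to a general leaf, produces the desired contradiction.

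To accommodate log canonical rather than merely klt singularities, one first passes to a dlt blowup $f\colon Y \to X$ from Theorem \ref{thm:dltblowup}(b): the pair $(Y,\Gamma)$ is $\Q$-factorial dlt with ${-}(K_Y+\Gamma)=f^*\big({-}(K_X+\Delta)\big)$ nef, and a saturated subsheaf $\mathcal M \subseteq (\Omega_X^{[1]})^{\boxtimes m}$ transports via reflexive pullback and saturation to a saturated subsheaf of $(\Omega_Y^{[1]})^{\boxtimes m}$, so that by Lemma \ref{lem:descenteff} the pseudoeffectivity question descends and one may assume $(X,\Delta)$ itself is $\Q$-factorial dlt. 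The main obstacle I foresee is verifying that the singular Campana--Paun algebraic integrability criterion applies verbatim in the dlt rather than strictly klt setting, with the appropriate treatment of residue contributions along the reduced part of the boundary, and in checking that the reflexive pullback of tensor powers of $\Omega^{[1]}$ through $f$ preserves the relevant slope data up to $f$-exceptional corrections.
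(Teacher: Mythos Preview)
Your overall architecture---contradiction, movable class $\alpha$, transfer of positivity from $(\Omega_X^{[1]})^{\boxtimes m}$ to $\Omega_X^{[1]}$, then produce an algebraically integrable foliation inside $T_X$---is the same as the paper's. But there is a genuine gap in your choice of foliation.

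You take $\mathcal K=\ker(T_X\to\mathcal F^*)$, where $\mathcal F$ is the maximal destabilising subsheaf of $\Omega_X^{[1]}$, and then compute $c_1(\mathcal K)\cdot\alpha>0$. This only gives $\mu_\alpha(\mathcal K)>0$, not $\mu_\alpha^{\min}(\mathcal K)>0$, and it is the latter (or the related inequality $2\mu_\alpha^{\min}>\mu_\alpha^{\max}$ of the quotient) that the Campana--P\u{a}un/Ou integrability criterion requires. In terms of the Harder--Narasimhan filtration $0=\mathcal E_0\subsetneq\cdots\subsetneq\mathcal E_r=T_X$, your $\mathcal K$ is essentially $\mathcal E_{r-1}$; but if the HN slopes of $T_X$ are, say, $3,1,-2,-5$, then $\mu_\alpha^{\min}(\mathcal E_{r-1})=-2<0$ and the criterion fails. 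The paper fixes this by invoking \cite[Lemma 2.1]{Ou23} to locate the index $k$ at which the HN slopes of $T_X$ change sign, so that simultaneously $\mu_\alpha^{\min}(\mathcal E_k)=\mu_\alpha(\mathcal E_k/\mathcal E_{k-1})>0$ and $\mu_\alpha(T_X/\mathcal E_k)<0$. The first inequality feeds into \cite[Proposition 2.2]{Ou23} to obtain algebraic integrability; the second yields $\alpha\cdot K_X+\alpha\cdot c_1(\mathcal E_k)>0$.

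Two further points. First, the reduction to a dlt blowup is unnecessary and, as you suspected, problematic: the reflexive pullback of $(\Omega_X^{[1]})^{\boxtimes m}$ under $f$ need not agree with $(\Omega_Y^{[1]})^{\boxtimes m}$, and one loses control of the saturated subsheaf. The paper works directly on the $\Q$-factorial log canonical pair, since \cite[Theorem 1.10]{Ou23} already applies in that generality. Second, the endgame is not a vague degree computation on leaves but a direct application of \cite[Theorem 1.10]{Ou23}: once $\mathcal E_k$ is algebraically integrable with associated rational map $X\dashrightarrow Y$, that theorem gives that $K_{\mathcal E_k}-K_X-\Delta_{\mathrm{ver}}$ is pseudoeffective, hence $\alpha\cdot K_X+\alpha\cdot c_1(\mathcal E_k)\leq 0$, contradicting the inequality above.
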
 

\begin{proof}
Assume that ${-}c_1(\mathcal M)$ is not pseudoeffective. Then there exists a movable curve class $\alpha$ on $X$ such that $c_1(\mathcal M)\cdot\alpha>0$, so that 
$$\mu_\alpha^{\max}\big(\big(\Omega_X^{[1]}\big)^{\boxtimes m}\big) > 0.$$ 
Since 
$$\mu_\alpha^{\max}\big(\big(\Omega_X^{[1]}\big)^{\boxtimes m}\big) = m\cdot\mu_\alpha^{\max}\big(\Omega_X^{[1]}\big) = {-}m\cdot\mu_\alpha^{\min}(T_X) $$ 
by \cite[Theorem 4.2]{GKP16} and by Lemma \ref{lem:dual}(a), we obtain
\begin{equation}\label{eq:alpha}
\mu_\alpha^{\min}(T_X) < 0.
\end{equation}
In particular, $T_X$ is not $\alpha$-semistable. 

Now we follow closely the proof of \cite[Theorem 1.4]{Ou23}. Let 
$$0=\mathcal E_0\subsetneq \mathcal E_1 \subsetneq \cdots \subsetneq \mathcal E_r=T_X$$
be the Harder-Narasimhan filtration of $T_X$, where $r\geq 2$. Note that
$$c_1(T_X)\cdot\alpha={-}K_X\cdot\alpha={-}(K_X+\Delta)\cdot\alpha+\Delta\cdot\alpha\geq0,$$
hence by \eqref{eq:alpha}, by \cite[Lemma 2.1]{Ou23} and by Lemma \ref{lem:dual}(e) there exists $k\in \{1,\dots,r-1\}$ such that
\begin{equation}\label{eq:quot}
\mu_\alpha(T_X/\mathcal E_k) <0
\end{equation}
and 
\begin{equation}\label{eq:quot1}
\mu_\alpha^{\min}(\mathcal E_k) = \mu_\alpha(\mathcal E_k/\mathcal E_{k-1})>0.
\end{equation}
Then \eqref{eq:quot} implies
\begin{equation}\label{eq:ineq1}
\alpha\cdot K_X+\alpha\cdot  c_1(\mathcal E_k)>0.
\end{equation}
On the other hand, \eqref{eq:quot1} yields
\begin{align*}
	2\mu_\alpha^{\min}(\mathcal E_k) &= 2\mu_\alpha(\mathcal E_k / \mathcal E_{k-1}) > \mu_\alpha(\mathcal E_k / \mathcal E_{k-1}) \\
 	&> \mu_\alpha(\mathcal E_{k+1} / \mathcal E_k) = \mu_\alpha^{\max}(T_X / \mathcal E_k).
\end{align*}
Therefore, $\mathcal E_k$ is an algebraically integrable foliation by \cite[Proposition 2.2]{Ou23}, so there exists a dominant rational map $f \colon X\dashrightarrow Y$ which induces $\mathcal E_k$ such that $0 < \dim Y<\dim X$. If $\Delta_{\textrm{ver}}$ denotes the part of $\Delta$ which is vertical over $Y$, then \cite[Theorem 1.10]{Ou23} implies that the divisor $K_{\mathcal E_k}-K_X-\Delta_{\textrm{ver}}$ is pseudoeffective, hence $K_{\mathcal E_k}-K_X$ is pseudoeffective. In particular, 
$$\alpha\cdot K_X+\alpha\cdot  c_1(\mathcal E_k)\leq0,$$ 
which contradicts \eqref{eq:ineq1} and finishes the proof. 
\end{proof}

An immediate consequence of Theorem \ref{thm:Ou} is the following important result \cite[Corollary 1.5]{Ou23} on the pseudoeffectivity of the second Chern class of varieties with nef \emph{anticanonical} class. It is the anticanonical analogue of the famous result of Miyaoka \cite[Theorem 6.6]{Miy87} on the pseudoeffectivity of the second Chern class of varieties with nef \emph{canonical} class. We provide the proof for the reader's convenience.

\begin{thm}\label{thm:c2psef}
Let $X$ be a projective, $\Q$-factorial, log canonical variety of dimension $n$ such that the codimension of the singular locus of $X$ is at least $3$ and such that ${-}K_X$ is nef. If $\pi\colon X'\to X$ is a desingularisation, then for any nef divisors $H_1,\dots, H_{n-2}$ on $X$ we have
$$c_2(X') \cdot \pi^*H_1\cdot\ldots\cdot \pi^*H_{n-2} \geq 0.$$
\end{thm}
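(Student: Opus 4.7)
The plan is to reduce the statement to the classical Bogomolov inequality on a general complete intersection surface, using Theorem~\ref{thm:Ou} as the source of generic semipositivity for $\Omega_X^{[1]}$.

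First, by continuity of intersection numbers in numerical classes and since nef classes are limits of ample $\Q$-classes, I would reduce to the case where each $H_i$ is an ample $\Q$-divisor. Because the singular locus of $X$ has codimension at least $3$, a standard Bertini-type argument ensures that, for $m \gg 0$ and very general $D_i \in |mH_i|$, the surface $S := D_1 \cap \dots \cap D_{n-2}$ is smooth and entirely contained in the smooth locus of $X$. Consequently $\pi$ restricts to an isomorphism in a neighbourhood of $S' := \pi^{-1}(S)$, and by the projection formula the number $c_2(X')\cdot \pi^*H_1\cdots \pi^*H_{n-2}$ equals, up to the positive factor $m^{n-2}$, the integer $c_2(\Omega_X^1|_S)$ computed on the smooth projective surface $S$.

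Next, I would apply Theorem~\ref{thm:Ou} with $m = 1$: every saturated subsheaf $\mathcal M$ of $\Omega_X^{[1]}$ satisfies that $-c_1(\mathcal M)$ is pseudoeffective, so for any movable curve class $\alpha$ on $X$ (in particular for $\alpha = H_1\cdots H_{n-2}\cdot H$ with $H$ an auxiliary very ample divisor), we get $\mu_\alpha^{\max}(\Omega_X^{[1]}) \le 0$. Via a Mehta--Ramanathan type restriction statement for movable polarisations in the framework of \cite{GKP16}, this generic semipositivity descends to the restriction $\Omega_X^1|_S$ on the smooth surface $S$. The two-dimensional Bogomolov inequality applied to this generically nef rank-$n$ sheaf on $S$ then yields
\[
2n\cdot c_2(\Omega_X^1|_S)\;\ge\;(n-1)\,(K_X|_S)^2.
\]

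Finally, since $-K_X$ is nef, the class $(-K_X)^2 \cdot H_1\cdots H_{n-2}$ is non-negative, as the intersection of a pseudoeffective class with a nef class of complementary dimension; equivalently, $(K_X|_S)^2 \ge 0$. Inserting this into the Bogomolov inequality gives $c_2(\Omega_X^1|_S) \ge 0$, which is exactly the required inequality $c_2(X')\cdot \pi^*H_1\cdots\pi^*H_{n-2} \ge 0$.

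The main technical obstacle lies in the Bogomolov step: one has to upgrade the slope bound provided by Theorem~\ref{thm:Ou} against movable classes on $X$ into an honest (semi)stability-type statement on the two-dimensional general complete intersection $S$, so that the classical Bogomolov inequality applies to $\Omega_X^1|_S$. This requires a Mehta--Ramanathan type restriction theorem adapted to movable classes, as developed in \cite{GKP16}; combining it carefully with the generic semipositivity coming from Theorem~\ref{thm:Ou} is the delicate part of the argument.
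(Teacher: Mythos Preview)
Your overall plan is the same as the paper's: reduce to a general complete intersection surface $S$ in the smooth locus, use Theorem~\ref{thm:Ou} to control the slopes of $\Omega_X^{[1]}$, and conclude $c_2\geq 0$ on $S$. There is, however, a genuine gap in the Bogomolov step.

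From Theorem~\ref{thm:Ou} you obtain $\mu_\alpha^{\max}\big(\Omega_X^{[1]}\big)\le 0$, and after restriction this gives $\mu_H^{\max}\big(\Omega_X^1|_S\big)\le 0$. But this is \emph{not} semistability: since $-K_X$ is nef, the average slope $\mu_H(\Omega_X^1|_S)=\frac{1}{n}K_X|_S\cdot H$ is also $\le 0$, and nothing prevents $\mu^{\max}>\mu$. The inequality you wrote,
\[
2n\,c_2(\Omega_X^1|_S)\ \ge\ (n-1)(K_X|_S)^2,
\]
is the Bogomolov discriminant inequality, which requires semistability. It can fail for sheaves satisfying only $\mu^{\max}\le 0$: on $\mathbb P^2$ take $\mathcal E=\OO\oplus\OO(-1)^{\oplus 2}$, which has $\mu^{\max}=0$ and $-c_1(\mathcal E)=2H$ nef, but $2\cdot 3\cdot c_2(\mathcal E)-2\,c_1(\mathcal E)^2=6-8<0$. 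So your intermediate inequality is not available.

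The fix is to work with the dual. By Lemma~\ref{lem:dual}(a), the condition $\mu_\alpha^{\max}(\Omega_X^{[1]})\le 0$ is exactly $\mu_\alpha^{\min}(T_X)\ge 0$, i.e.\ $T_X$ is generically $(H_1,\dots,H_{n-2})$-semipositive in Miyaoka's sense. Since moreover $c_1(T_X)\equiv -K_X$ is nef, \cite[Theorem~6.1]{Miy87} yields directly $c_2(T_X)\cdot H_1\cdots H_{n-2}\ge 0$, without passing through a discriminant inequality. This is precisely what the paper does.

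A secondary point: you assume that $\pi$ is an isomorphism over the smooth locus, so that $S'\simeq S$. The statement is for an \emph{arbitrary} desingularisation, so this need not hold. The paper handles this by allowing $\pi|_{S'}\colon S'\to S$ to be any birational morphism of smooth surfaces, comparing $c_2(T_{X'}|_{S'})$ and $c_2(T_X|_S)$ via a $C^\infty$-splitting of the normal bundle and Noether's formula (each blowup only increases $c_2$ of the surface). You could alternatively reduce to a resolution that is an isomorphism over the smooth locus, but you should say why the general case follows.
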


\begin{proof}
By continuity of intersection products we may assume that each divisor $H_i$ is very ample and that it is general in the linear system $|H_i|$. Theorem \ref{thm:Ou} implies that the reflexive sheaf $T_X$ is generically $(H_1,\dots,H_{n-2})$-semipositive in the sense of \cite[\S6]{Miy87}. Since $c_1(X)\equiv{-}K_X$ is nef, by \cite[Theorem 6.1]{Miy87} we obtain
\begin{equation}\label{eq:c2}
c_2(X) \cdot H_1\cdot\ldots\cdot H_{n-2} \geq 0.
\end{equation}

Set $H_i':=\pi^*H_i$ for all $i=1,\dots,n-2$, and set $S:=H_1\cap\ldots\cap H_{n-2}$ and $S':=H_1'\cap\ldots\cap H_{n-2}'$. Since the codimension of the singular locus of $X$ is at least $3$, we may assume that $S$ and $S'$ are smooth surfaces. We first claim that there exist a locally free sheaf $\mathcal F$ on $S$ and $C^\infty$-decompositions 
\begin{equation}\label{eq:Cinfty}
T_X|_S\simeq T_S\oplus\mathcal F\quad\text{and}\quad T_{X'}|_{S'}\simeq T_{S'}\oplus (\pi|_{S'})^*\mathcal F.
\end{equation}
Indeed, there exists a $C^\infty$-decomposition $T_X|_{H_1}\simeq T_{H_1}\oplus N_{H_1/X}$ on the smooth locus of $X$, as well as a $C^\infty$-decomposition $T_{X'}|_{H_1'}\simeq T_{H_1'}\oplus N_{H_1'/X'}$. By induction on the dimension there exist a locally free sheaf $\mathcal G$ on $S$ and $C^\infty$-decompositions 
$$ T_{H_1}|_S\simeq T_S\oplus\mathcal G\quad\text{and}\quad T_{H_1'}|_{S'}\simeq T_{S'}\oplus (\pi|_{S'})^*\mathcal G. $$
Then \eqref{eq:Cinfty} follows by setting $\mathcal F:=\mathcal G\oplus N_{H_1/X}|_S$.

Now, the birational map $\pi|_{S'}\colon S'\to S$ is a sequence of blowups at smooth points by \cite[Corollary III.4.4]{BHPV04}, hence $c_2(S')\geq c_2(S)$ by Noether's formula. Furthermore, we have $c_1(S)\cdot c_1(\mathcal F)=c_1(S')\cdot (\pi|_{S'})^*c_1(\mathcal F)$ by the projection formula, and $c_2(\mathcal F)=c_2\big((\pi|_{S'})^*\mathcal F\big)$. Therefore, by \eqref{eq:Cinfty} and the Whitney product formula we obtain
\begin{align*}
c_2(T_{X'}|_{S'}) &= c_2(S')+c_1(S')\cdot (\pi|_{S'})^*c_1(\mathcal F)+c_2\big((\pi|_{S'})^*\mathcal F\big)\\
&\geq c_2(S)+c_1(S)\cdot c_1(\mathcal F)+c_2(\mathcal F)=c_2(T_X|_S).
\end{align*}
Since $c_2(T_X|_S)\geq0$ by \eqref{eq:c2}, we obtain $c_2(T_{X'}|_{S'})\geq0$, which was to be proved.
\end{proof}

\section{Proof of Theorem \ref{thm:mainthm_numdim=1partA}}\label{sec:Theorem E}

In this section we prove Theorem \ref{thm:mainthm_numdim=1partA}. The proof combines general ideas from \cite{LP18a,LP20b} with Theorem \ref{thm:Ou}. In particular, singular hermitian metrics on line bundles play a key role. The main idea is to connect the num-effectivity of a nef line bundle to the existence of differential forms with values in multiples of the line bundle. This is done in Theorem \ref{thm:criterion} below. The proof follows closely that from \cite[Section 4]{LP18a}; however, extra care has to be taken, since we may not always work on a resolution, as nefness of the anticanonical divisor is not a birational invariant. 

The reason why working with differential forms is advantageous is that we may then apply the Hard Lefschetz theorem from \cite{DPS01} to study higher cohomology groups of multiples of $L$ in more detail, and thus the corresponding Euler-Poincar\'e characteristic.

We start with the following general lemma.

\begin{lem} \label{lemmafund} 
Let $X$ be a projective manifold and let $\mathcal E$ be a locally free sheaf on $X$. Let $\mathcal L$ be a pseudoeffective line bundle on $X$ which is not numerically trivial, and assume that there exist an infinite subset $\mathcal T\subseteq\Z$ and numerically trivial line bundles $\mathcal A_m$ for every $m\in\mathcal T$ such that
$$ H^0(X,\mathcal E \otimes \mathcal L^{\otimes m}\otimes\mathcal A_m) \neq 0.$$
Then there exist a positive integer $r$, a saturated line bundle $\mathcal M$ in $\bigwedge^r\mathcal E$, an infinite subset $\mathcal S\subseteq\N$ and numerically trivial line bundles $\mathcal B_m$ for every $m\in\mathcal S$ such that 
$$H^0(X,\mathcal M\otimes \mathcal L^{\otimes m}\otimes \mathcal B_m) \neq 0.$$
\end{lem}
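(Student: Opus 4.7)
The strategy is to first produce, from each section $s_m$, a saturated line subbundle of $\mathcal E$; then to use wedge products together with a pigeonhole step to pin down a single saturated line subbundle of some $\bigwedge^r \mathcal E$ capturing infinitely many of these sections simultaneously; and finally to rule out arbitrarily negative exponents by a pseudoeffectivity argument, which is where the hypotheses on $\mathcal L$ are used.

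For each $m \in \mathcal T$, fix a non-zero $s_m \in H^0(X, \mathcal E \otimes \mathcal L^{\otimes m} \otimes \mathcal A_m)$, and let $\mathcal M_m \subseteq \mathcal E$ be the saturation of the induced injection $\mathcal L^{\otimes(-m)} \otimes \mathcal A_m^{-1} \hookrightarrow \mathcal E$; since $X$ is smooth, $\mathcal M_m$ is a line subbundle, and $s_m$ factors through a non-zero section of $\mathcal M_m \otimes \mathcal L^{\otimes m} \otimes \mathcal A_m$. Let $r \in \{1, \dots, \rk \mathcal E\}$ be the largest integer for which there exist $m_1^*, \dots, m_r^* \in \mathcal T$ such that the saturation $\mathcal N \subseteq \mathcal E$ of $\mathcal M_{m_1^*} + \dots + \mathcal M_{m_r^*}$ has generic rank $r$. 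By maximality of $r$, the subsheaf $\mathcal N$ coincides with the saturation of $\sum_{m \in \mathcal T} \mathcal M_m$, and in particular $\mathcal M_m \subseteq \mathcal N$ for every $m \in \mathcal T$. Expressing $\mathcal M_m = \sum_{i=1}^{r} c_i(m)\, \mathcal M_{m_i^*}$ at a generic point of $X$, at least one coefficient $c_i(m)$ is non-zero; a pigeonhole argument on $\{1, \dots, r\}$ yields an infinite $\mathcal T' \subseteq \mathcal T$ along which the non-vanishing index is constantly $r$.

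For every $m \in \mathcal T'$, the wedge $s_{m_1^*} \wedge \dots \wedge s_{m_{r-1}^*} \wedge s_m$ is then non-zero and defines a section of $\bigwedge^r \mathcal E \otimes \mathcal L^{\otimes(M+m)} \otimes \mathcal B_{M+m}$, where $M := m_1^* + \dots + m_{r-1}^*$ and the line bundle $\mathcal B_{M+m} := \mathcal A_{m_1^*} \otimes \dots \otimes \mathcal A_{m_{r-1}^*} \otimes \mathcal A_m$ is numerically trivial. At a generic point this wedge lies in the Pl\"ucker line $\bigwedge^r \mathcal N \subseteq \bigwedge^r \mathcal E$, which depends only on $\mathcal N$ and not on $m$; let $\mathcal M \subseteq \bigwedge^r \mathcal E$ be its saturation, which is a line bundle by smoothness of $X$. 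Since the quotient $\bigwedge^r \mathcal E / \mathcal M$ is torsion-free, the wedge factors through a non-zero section of $\mathcal M \otimes \mathcal L^{\otimes(M+m)} \otimes \mathcal B_{M+m}$.

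Finally, we arrange $M + m \in \N$: if not, then the distinct integers $M+m$ outside $\N$ form an infinite set bounded above, hence $M+m \to -\infty$ along a subsequence. The existence of the above sections implies that the class $c_1(\mathcal M) + (M+m)\, c_1(\mathcal L)$ is pseudoeffective for all such $m$; dividing by $|M+m|$ and letting $|M+m| \to \infty$, closedness of the pseudoeffective cone forces $-c_1(\mathcal L)$ to be pseudoeffective. Combined with the pseudoeffectivity of $\mathcal L$ and pointedness of the pseudoeffective cone, this forces $\mathcal L$ to be numerically trivial, contradicting the hypothesis. Hence $M+m$ lies in $\N$ for infinitely many $m \in \mathcal T'$, finishing the proof. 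The main obstacle is this final limit step, which is precisely where the numerical non-triviality of $\mathcal L$ enters; the preceding construction is essentially a saturation/Pl\"ucker/pigeonhole bookkeeping argument.
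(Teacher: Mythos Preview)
Your proof is correct and follows essentially the same approach as the paper's: form the rank-$r$ span of the images of the sections inside $\mathcal E$, pass to the saturated determinant line in $\bigwedge^r\mathcal E$, and reduce the exponent to $\N$ via the pseudoeffectivity of $\mathcal L$ together with the pointedness of the pseudoeffective cone. The only organisational differences are that the paper (following \cite{LP18a}) carries out the $\N$-reduction at the outset rather than at the end, works with the image sheaf $\mathcal F$ of $\bigoplus_{m\in\mathcal T}\mathcal L^{\otimes -m}\otimes\mathcal A_m^{-1}\to\mathcal E$ rather than individually saturating each line, and varies all $r$ indices in the wedge rather than fixing $r-1$ and pigeonholing on the last one; none of these affect the substance of the argument.
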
 

\begin{proof}
Since the proof is analogous to that of \cite[Lemma 4.1]{LP18a}, we only indicate the most important changes to the proof.

As in the proof of \cite[Lemma 4.1]{LP18a}, we may assume that $\mathcal T\subseteq\N$. For every $m \in \mathcal T$ we fix a nontrivial section of $H^0(X,\mathcal E \otimes \mathcal L^{\otimes m}\otimes\mathcal A_m)$, which gives an inclusion 
$$\mathcal L^{\otimes -m}\otimes\mathcal A_m^{-1} \to \mathcal E.$$  
Let $\mathcal F \subseteq\mathcal E$ be the  torsion-free coherent sheaf which is the image of the induced map 
$$\textstyle\bigoplus_{m\in \mathcal T} (\mathcal L^{\otimes -m}\otimes\mathcal A_m^{-1}) \to \mathcal E,$$
and let $r$ be the rank of $\mathcal F$. As in the proof of \cite[Lemma 4.1]{LP18a} we deduce that there exist infinitely many $r$-tuples $(m_1,\dots,m_r)$ such that, if we set $\mathcal B_{m_1+\dots+m_r}:=\mathcal A_{m_1}\otimes\dots\otimes\mathcal A_{m_r}$, then there are inclusions
\begin{equation}\label{eq:inclusion2}
\mathcal L^{\otimes {-}(m_1+\dots+m_r)}\otimes\mathcal B_{m_1+\dots+m_r}^{-1} \to \det\mathcal F\subseteq \bigwedge^r\mathcal E.
\end{equation}
Now, if $\mathcal M$ is the saturation of $\det\mathcal F$ in $\bigwedge^r\mathcal E$, then by \eqref{eq:inclusion2} there exists an infinite set $\mathcal S\subseteq \N$ such that
$$H^0(X,\mathcal M\otimes \mathcal L^{\otimes m}\otimes\mathcal B_m) \neq 0\quad\text{for all }m\in\mathcal S,$$
which proves the lemma.
\end{proof} 

The following Nonvanishing Criterion is analogous to \cite[Theorem 6.3]{LP18a} and \cite[Theorem 7.1]{LP20b}.

\begin{thm} \label{thm:criterion}
	Let $(X,\Delta)$ be a projective, $\Q$-factorial, log canonical pair such that ${-}(K_X+\Delta)$ is nef and let $L$ be a nef Cartier divisor on $X$ such that $\nu(X,L) = 1$. Let $\pi\colon Y\to X$ be a resolution of $X$ and assume that for some non-negative integer $p$ there exist infinitely many integers $m$ and numerically trivial line bundles $\mathcal A_m$ such that
	$$ H^0\big(Y,(\Omega^1_Y)^{\otimes p} \otimes \OO_Y(m\pi^*L)\otimes\mathcal A_m\big) \neq 0. $$
	Then $L$ is num-effective. 
\end{thm}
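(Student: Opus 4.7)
The plan is to mimic the strategy of \cite[Theorem 6.3]{LP18a} and \cite[Theorem 7.1]{LP20b}, combining Lemma \ref{lemmafund} with Theorem \ref{thm:Ou} and the Hard Lefschetz theorem of \cite{DPS01}. We may assume $L \not\equiv 0$, since otherwise the conclusion is trivial. First, I would apply Lemma \ref{lemmafund} on $Y$ with $\mathcal E := (\Omega^1_Y)^{\otimes p}$ and $\mathcal L := \OO_Y(\pi^*L)$; the latter is pseudoeffective and not numerically trivial, and the hypothesis furnishes the required nonvanishing input. This produces a positive integer $r$, an infinite set $\mathcal S \subseteq \N$, numerically trivial line bundles $\mathcal B_m$ on $Y$ for $m \in \mathcal S$, and a saturated line bundle $\mathcal M \subseteq (\Omega^1_Y)^{\otimes rp}$ such that $H^0(Y, \mathcal M \otimes \OO_Y(m\pi^*L) \otimes \mathcal B_m) \neq 0$ for every $m \in \mathcal S$.

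Next, I would descend $\mathcal M$ to a reflexive rank-one subsheaf of $(\Omega_X^{[1]})^{\boxtimes rp}$. On a big open subset of $X$ over which $\pi$ is an isomorphism, $\pi_*\mathcal M$ identifies with a subsheaf of $(\Omega^1_X)^{\otimes rp}$; passing to reflexive hulls and saturating inside $(\Omega_X^{[1]})^{\boxtimes rp}$ then yields a saturated reflexive subsheaf $\mathcal N \simeq \OO_X(D) \subseteq (\Omega_X^{[1]})^{\boxtimes rp}$ for some Weil divisor $D$ on $X$. Applying Theorem \ref{thm:Ou} to $\mathcal N$ gives that $-D$ is pseudoeffective on $X$. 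Moreover, each nonzero section from the previous step yields an effective divisor $F_m$ on $Y$ with $F_m \sim c_1(\mathcal M) + m\pi^*L + c_1(\mathcal B_m)$; pushing forward and using Lemma \ref{lem:key_descent} to absorb $\pi_* c_1(\mathcal B_m)$ into numerical equivalence gives $\pi_* F_m \equiv D + mL$, so that $D + mL$ is num-effective on $X$ for every $m \in \mathcal S$.

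The final and most delicate step is to combine (i) $-D$ pseudoeffective, (ii) $D + mL$ num-effective for infinitely many $m$, and (iii) $\nu(X,L) = 1$ into the conclusion that $L$ itself is num-effective. Following \cite[Section 6]{LP18a} and \cite[Section 7]{LP20b}, I expect to exploit that $L^2 \equiv 0$, via Hodge-index type inequalities, to control the intersection behaviour of $D$ against nef classes, and then to apply the Hard Lefschetz theorem of \cite{DPS01} to a singular hermitian metric of minimal singularities on $\OO_X(L)$ in order to transfer cohomology between tensor powers of $\Omega^1_X$ and powers of $\OO_X(L)$, eventually producing an effective divisor in the numerical class of $L$. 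This is the main obstacle of the argument, and is precisely where the hypothesis $\nu(X,L) = 1$ is used essentially; by contrast, the preceding steps are formal consequences of Lemma \ref{lemmafund}, Theorem \ref{thm:Ou}, and the descent results of Section \ref{sec:prelim}.
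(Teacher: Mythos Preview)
Your first two steps are essentially correct and match the paper's argument: apply Lemma \ref{lemmafund} on $Y$, then descend to $X$ using $\Q$-factoriality and Theorem \ref{thm:Ou} to obtain a divisor $D$ (the paper calls it $M'$) with ${-}D$ pseudoeffective, together with effective divisors $E_m\equiv D+mL$ for infinitely many $m\in\mathcal S$.

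The gap is in your final step. You propose to finish using Hodge-index inequalities and the Hard Lefschetz theorem of \cite{DPS01}, but neither is needed here, and the Hard Lefschetz argument belongs to the \emph{next} result (Corollary \ref{cor:cohomologygroups}), not to Theorem \ref{thm:criterion}. The actual conclusion is a short dichotomy. From $E_m\equiv D+mL$ you get
\[
mL\equiv E_m+({-}D),
\]
with $E_m\geq0$ and ${-}D$ pseudoeffective. If some $E_{m_0}\neq0$, then $mL$ is a nef divisor of numerical dimension $1$ which is numerically equivalent to the sum of a \emph{nonzero} effective divisor and a pseudoeffective divisor, and \cite[Theorem 6.1]{LP18a} gives directly that $L$ is num-effective; this is the key input you are missing. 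If instead $E_m=0$ for all $m\in\mathcal S$, pick $m_1<m_2$ in $\mathcal S$ to get $(m_2-m_1)L\equiv0$, contradicting $\nu(X,L)=1$.

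So your outline becomes a complete proof once you replace the vague ``Hodge-index plus Hard Lefschetz'' paragraph by this two-case argument invoking \cite[Theorem 6.1]{LP18a}.
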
 

\begin{proof}
We apply Lemma \ref{lemmafund} with $\mathcal E := (\Omega^1_Y)^{\otimes p} $ and $\mathcal L := \pi^*\OO_X(L)$. Then there exist:
\begin{enumerate}[(i)]
	\item a positive integer $r$ and a Cartier divisor $M$ on $Y$ such that the line bundle $\OO_Y(M)$ is saturated in $\bigwedge^r(\Omega^1_Y)^{\otimes p}$,

	\item an infinite set $\mathcal S\subseteq\N$,

	\item integral divisors $N_m\geq0$ on $Y$ for $m\in\mathcal S$, and 

	\item numerically trivial Cartier divisors $B_m$ on $Y$ for $m\in\mathcal S$,
\end{enumerate}
such that 
\begin{equation}\label{eq:infmany}
N_m \sim M+m\pi^* L+B_m.
\end{equation}
Set $M':=\pi_*M$. Since $X$ is $\Q$-factorial, there exists a positive integer $k$ such that $kM'$ is Cartier. Then the reflexive sheaves $\big(\pi_*\OO_Y(kM)\big)^{**}$ and $\OO_X(kM')$ coincide on a big open subset of $X$, hence they are the same. Since
\begin{align*}
\OO_X(kM')&=\big(\pi_*\OO_Y(kM)\big)^{**}\subseteq \bigg(\pi_*\Big(\bigwedge\nolimits^r(\Omega^1_Y)^{\otimes p}\Big)^{\otimes k}\bigg)^{**}\\
&=\Big(\bigwedge\nolimits^{[r]}\big(\Omega_X^{[1]}\big)^{\boxtimes p}\Big)^{\boxtimes k}\subseteq\big(\Omega_X^{[1]}\big)^{\boxtimes krp},
\end{align*}
by Theorem \ref{thm:Ou} we infer that the divisor ${-}M'$ is pseudoeffective.

By Lemma \ref{lem:key_descent} for each $m\in\mathcal S$ there exists a numerically trivial $\Q$-Cartier divisor $B_m'$ on $X$ such that $B_m=\pi^*B_m'$. Pushing forward \eqref{eq:infmany} to $X$ we obtain
$$\pi_*N_m\sim M' +mL+B_m',$$ 
and hence
\begin{equation}\label{eq:repeat}
mL\sim\pi_*N_m+({-}M'-B_m')\quad\text{for }m\in\mathcal S.
\end{equation}
Note that the divisor ${-}M'-B_m'$ is pseudoeffective.

Now, if there exists a positive integer $m_0\in\mathcal S$ such that $\pi_*N_{m_0}\neq0$, then we conclude by \cite[Theorem 6.1]{LP18a}. 

Otherwise, there exist positive integers $m_1<m_2$ in $\mathcal S$ such that $\pi_*N_{m_1}=\pi_*N_{m_2}=0$. Then \eqref{eq:repeat} gives
$$(m_2-m_1)L\sim B_{m_1}'-B_{m_2}'\equiv 0,$$
which contradicts the assumption $\nu(X,L)=1$. This finishes the proof. 
\end{proof}

We now deduce a result which is crucial for the proof of Theorem \ref{thm:mainthm_numdim=1partA}.

\begin{cor}\label{cor:cohomologygroups}
	Let $(X,\Delta)$ be a projective, $\Q$-factorial, log canonical pair such that ${-}(K_X+\Delta)$ is nef and let $L$ be a nef Cartier divisor on $X$ such that $\nu(X,L) = 1$. Let $\pi\colon Y\to X$ be a resolution of $X$ and assume that for some non-negative integer $p$ there exist an infinite subset $\mathcal S\subseteq\N$ and numerically trivial line bundles $\mathcal A_m$ for $m\in\mathcal S$ such that
	\begin{equation}\label{eq:788}
		H^p\big(Y,\OO_Y({-}m\pi^*L)\otimes\mathcal A_m\big) \neq 0.
	\end{equation}
	Then $L$ is num-effective.
\end{cor}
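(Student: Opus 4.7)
The plan is to convert the higher-cohomology hypothesis on $\OO_Y(-m\pi^*L)\otimes\mathcal A_m$ into the existence of sections of a tensor power of $\Omega^1_Y$ twisted by a suitable line bundle, and then invoke Theorem \ref{thm:criterion}. The bridge between cohomology classes in degree $p$ and sections of $\Omega^p$ twisted by a nef line bundle is provided by the Hard Lefschetz theorem for pseudoeffective line bundles of \cite{DPS01}.

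Setting $n:=\dim Y$, Serre duality on the smooth projective variety $Y$ rewrites the hypothesis as
$$H^{n-p}\big(Y,\omega_Y\otimes \OO_Y(m\pi^*L)\otimes\mathcal A_m^{-1}\big)\neq 0\quad\text{for every }m\in\mathcal S.$$
I would then apply the Hard Lefschetz theorem of \cite{DPS01} to the nef line bundle $\mathcal L_m:=\OO_Y(m\pi^*L)\otimes\mathcal A_m^{-1}$, equipped with a singular hermitian metric $h_m$ of semipositive curvature current whose multiplier ideal is trivial (such a metric exists on any nef line bundle on a projective manifold by Demailly's regularisation, using smooth metrics of positive curvature on $\mathcal L_m+\varepsilon H$ for an ample $H$ and $\varepsilon\to 0$). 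The theorem then yields a surjection
$$H^0\big(Y,\Omega_Y^p\otimes\mathcal L_m\big)\twoheadrightarrow H^{n-p}\big(Y,\omega_Y\otimes\mathcal L_m\big),$$
so non-vanishing of the target forces $H^0(Y,\Omega_Y^p\otimes\mathcal L_m)\neq 0$ for every $m\in\mathcal S$.

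Since $\Omega_Y^p$ embeds as a direct summand of $(\Omega_Y^1)^{\otimes p}$ via antisymmetrisation, this at once produces
$$H^0\big(Y,(\Omega_Y^1)^{\otimes p}\otimes\OO_Y(m\pi^*L)\otimes\mathcal A_m^{-1}\big)\neq 0$$
for infinitely many $m$, and Theorem \ref{thm:criterion} applied with the numerically trivial line bundles $\mathcal A_m^{-1}$ in the role of $\mathcal A_m$ there yields that $L$ is num-effective.

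The main point that needs care is the control of the multiplier ideal in the Hard Lefschetz step: one needs $\mathcal I(h_m)=\OO_Y$ so that the conclusion delivers honest global sections of $\Omega_Y^p\otimes\mathcal L_m$ rather than sections twisted by a proper ideal sheaf. This is the only analytic input and is standard for nef line bundles on projective manifolds, but it is where one should be precise. The boundary case $p=0$ is in fact vacuous, since for $m\geq 1$ a non-zero section of $\OO_Y(-m\pi^*L)\otimes\mathcal A_m$ produces an effective divisor numerically equivalent to $-m\pi^*L$, which on pairing with any nef class $\alpha$ with $\pi^*L\cdot\alpha>0$ (which exists because $\nu(X,L)=1$) gives a contradiction.
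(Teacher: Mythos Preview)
Your overall approach matches the paper's: Serre duality, then the Hard Lefschetz theorem of \cite{DPS01}, then Theorem~\ref{thm:criterion}. However, there is a genuine gap precisely at the point you yourself flag as delicate.

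The assertion that a nef line bundle on a projective manifold always carries a singular metric with semipositive curvature current \emph{and} trivial multiplier ideal is not a standard fact, and the argument you sketch does not establish it. The smooth metrics on $\mathcal L_m+\varepsilon H$ have positive curvature but live on the wrong line bundle; passing to a weak limit as $\varepsilon\to 0$ does produce a closed positive current in $c_1(\mathcal L_m)$, but this limit can acquire singularities with no control whatsoever on its multiplier ideal. Demailly's regularisation goes in the opposite direction (approximating a given singular metric by ones with analytic singularities) and says nothing about producing a semipositive metric with $\mathcal I(h)=\OO_Y$. Whether every nef class contains a closed positive current with zero Lelong numbers, which would suffice here, is not known in general.

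The paper closes this gap by a dichotomy. One fixes, for each $m\in\mathcal S$, an arbitrary singular metric $h_m$ with semipositive curvature on $\mathcal L^{\otimes m}\otimes\mathcal A_m^{-1}$ (such metrics exist since the bundle is nef, hence pseudoeffective). If $\mathcal I(h_m)\neq\OO_Y$ for some $m$, then \cite[Theorem~6.5]{LP18a} applies directly and shows that $\pi^*L$ is num-effective, whence $L$ is num-effective by Lemma~\ref{lem:descenteff}(a). Otherwise $\mathcal I(h_m)=\OO_Y$ for every $m\in\mathcal S$, and then your argument, which coincides with the paper's, goes through verbatim. So the missing ingredient is precisely the invocation of \cite[Theorem~6.5]{LP18a} to dispose of the case where the multiplier ideal is non-trivial.
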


\begin{proof}
	Denote $\mathcal L:=\pi^*\OO_X(L)$. If there exist $m\in\mathcal S$ and a singular metric $h_m$ with semipositive curvature current on the line bundle $\mathcal L^{\otimes m}\otimes\mathcal A_m^{-1}$ such that $\mathcal I(h_m)\neq\OO_Y$, then $\mathcal L$ is num-effective by \cite[Theorem 6.5]{LP18a}, and hence $L$ is num-effective by Lemma \ref{lem:descenteff}(a).

	Therefore, if we fix singular metrics $h_m$ with semipositive curvature current on $\mathcal L^{\otimes m}\otimes\mathcal A_m^{-1}$ for each $m\in\mathcal S$, we may assume that
\begin{equation}\label{eq:789}
\mathcal I(h_m)=\OO_Y\quad\text{for all }m\in\mathcal S.
\end{equation}	
Fix a K\"ahler form $\omega$ on $Y$. A version of the Hard Lefschetz theorem \cite[Theorem 0.1]{DPS01}, together with \eqref{eq:789}, implies that for each $m\in\mathcal S$ we have the surjection
	\[
	\xymatrix{ 
	H^0\big(Y,\Omega^p_Y\otimes \mathcal L^{\otimes m}\otimes\mathcal A_m^{-1}\big) \ar[rr]^{\omega^{n-p}\wedge\bullet} && H^{n-p}\big(Y,\Omega^n_Y\otimes \mathcal L^{\otimes m}\otimes\mathcal A_m^{-1}\big).
	}
	\]
	This, together with \eqref{eq:788} and with Serre duality, yields 
	$$H^0\big(Y,\Omega^p_Y\otimes \mathcal L^{\otimes m}\otimes\mathcal A_m^{-1}\big) \neq 0$$
	for all $m\in\mathcal S$, and we conclude by Theorem \ref{thm:criterion}.
\end{proof}

We finally have:

\begin{proof}[Proof of Theorem \ref{thm:mainthm_numdim=1partA}]
	Let $ f \colon (W,\Gamma) \to (X,\Delta) $ be a dlt blowup of $ (X,\Delta) $. Since $ X $ has rational singularities by assumption and $ W $ has rational singularities by \cite[Theorem 5.22]{KM98}, we deduce $ \chi(X,\OO_X) = \chi(W,\OO_W) $. Therefore, we may replace $ (X,\Delta) $ with $ (W,\Gamma) $ and $ L $ with $ f^* L $ by Lemma \ref{lem:descenteff}(b), and hence we may assume that $ (X,\Delta) $ is a $ \Q $-factorial dlt pair.
	
	The rest of the proof is similar to that of \cite[Theorem 6.7]{LP18a}. However, since the result in op.\ cit.\ has different assumptions, we provide below the details for the benefit of the reader.
	
	Arguing by contradiction, we assume that $L$ is not num-effective. Pick a resolution $\pi\colon Y\to X$ of $ X $. Then, by Corollary \ref{cor:cohomologygroups}, for all $p\geq 0$ and for all $m\gg0$ we have
	$$H^p\big(Y,\OO_Y({-}m\pi^*L)\big) = 0,$$
	thus
	$$ \textstyle\chi\big(Y,\OO_Y({-}m\pi^*L)\big) = 0$$
	holds for all $m\gg0$, and consequently for all $m\in\Z$. But then $\chi(Y,\OO_Y) = 0$, and therefore $\chi(X,\OO_X) = 0$, since $X$ has rational singularities. This is a contradiction which finishes the proof.
\end{proof}

\section{Proofs of Theorems \ref{thm:reduction} and \ref{thm:mainthm_numdim=1partB}}\label{sec:reduction}

In this section we first prove Theorem \ref{thm:reduction}. To this end, we will combine the methods from \cite{MW21} with a result on the virtual commutativity  -- in a certain sense -- of the fundamental group $\pi_{1}(Y)$ of a compact klt K\"ahler space $Y$ with numerically trivial canonical divisor. We then deduce Theorem \ref{thm:mainthm_numdim=1partB} as an easy consequence of Theorems \ref{thm:mainthm_numdim=1partA} and \ref{thm:reduction}.

We start with the commutativity result. If $Y$ is a projective klt variety with numerically trivial canonical divisor, then it is expected that $\pi_{1}(Y)$ is a virtually abelian group, i.e.\ it contains an abelian subgroup of finite index. This problem is still open when $\dim Y \geq 4$. We are, however, able to show that any of its linear representations is virtually abelian, which suffices for our purposes. We first need the following lemma.

\begin{lem}\label{lem:dominatinggeneraltype}
Let $Y$ be a compact klt K\"ahler space with numerically trivial canonical divisor. Then there does not exist a finite \'etale cover of $Y$ which dominates a  variety of general type.
\end{lem}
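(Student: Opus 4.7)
The plan is to argue by contradiction. Suppose that there is a finite \'etale cover $\pi\colon Y'\to Y$ together with a dominant morphism $f\colon Y'\to Z$, where $Z$ is a projective variety of general type with $d:=\dim Z>0$. Since $\pi$ is \'etale, $Y'$ is again a compact klt K\"ahler space and $K_{Y'}\equiv 0$. By abundance for klt K\"ahler spaces with numerically trivial canonical class (available via the singular Beauville--Bogomolov decomposition in this setting, in the spirit of Theorem~\ref{thm:MW}), after passing to a further finite quasi-\'etale cover $\mu\colon \hat Y\to Y'$ I may assume that $K_{\hat Y}\sim_\Q 0$. The composition $g:=f\circ\mu\colon\hat Y\to Z$ is still dominant, so it suffices to derive a contradiction from the pair $(\hat Y,g)$.

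Next, I choose a resolution $\sigma\colon\tilde Y\to\hat Y$, adjusted so that the induced rational map $\tilde Y\dashrightarrow Z$ extends to a morphism $h\colon\tilde Y\to Z$. Since $\hat Y$ is klt with $K_{\hat Y}\sim_\Q 0$, the birational invariance of the canonical ring for klt varieties yields $h^0(\tilde Y,mK_{\tilde Y})=h^0(\hat Y,mK_{\hat Y})=1$ for every sufficiently divisible $m$, and hence $\kappa(\tilde Y)=0$.

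To conclude, I replace $h$ by its Stein factorisation and resolve, so that I am given a fibration $h\colon\tilde Y\to\tilde Z$ onto a smooth projective variety $\tilde Z$ of general type with general fibre $F$. Viehweg's subadditivity theorem for fibrations whose base is of general type then yields $\kappa(\tilde Y)\geq\kappa(F)+\dim\tilde Z$. If $\kappa(F)\geq 0$, this gives $\kappa(\tilde Y)\geq d>0$, contradicting $\kappa(\tilde Y)=0$. If instead $\kappa(F)=-\infty$, then $F$ is uniruled by \cite[Corollary 0.3]{BDPP}, and consequently $\tilde Y$ is uniruled as well; but then $K_{\tilde Y}$ is not pseudoeffective, so $\kappa(\tilde Y)=-\infty$, again a contradiction.

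The main obstacle I anticipate is the passage from the K\"ahler category to the projective one in the final step, since Viehweg's subadditivity is classically stated in the projective setting. I plan to handle this either by exploiting the product decomposition furnished by the Beauville--Bogomolov theorem to reduce to projective factors, or by invoking K\"ahler extensions of $C^+_{n,m}$ already available in the literature.
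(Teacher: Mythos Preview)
Your approach via Kodaira dimension and Viehweg's subadditivity is genuinely different from the paper's, and in the projective category it would work (modulo the small correction below). But the gap you flag at the end is real and not easily closed: Viehweg's $C_{n,m}^+$ is a projective theorem, and neither of your proposed patches is available off the shelf. The Beauville--Bogomolov decomposition gives no obvious reason why the dominant map to $Z$ should factor through a projective factor, and K\"ahler versions of subadditivity over a general-type base are not established in the generality you need. Since the lemma is used in the paper precisely for compact K\"ahler spaces (in the proof of Proposition~\ref{m-gl_rep}), this is a genuine obstruction, not a technicality. A smaller point: the equality $h^0(\tilde Y,mK_{\tilde Y})=h^0(\hat Y,mK_{\hat Y})$ requires canonical, not merely klt, singularities; for klt you only get $\leq$, which still yields $\kappa(\tilde Y)\leq 0$ and suffices for the contradiction, but the sentence as written is incorrect.

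The paper sidesteps all of this with a short slope argument that works directly in the K\"ahler category. After reducing to $Y$ itself and resolving the rational map to get $h\colon\overline{Y}\to Z$ through a resolution $\pi\colon\overline{Y}\to Y$, the inclusion $h^*\Omega^1_Z\subseteq\Omega^1_{\overline{Y}}$ pushes down to a reflexive subsheaf $(\pi_*h^*\Omega^1_Z)^{**}\subseteq\Omega^{[1]}_Y$. By Guenancia's theorem \cite[Theorem A]{Gue16}, the tangent sheaf $T_Y$ of a klt K\"ahler space with $c_1=0$ is $\alpha$-semistable for any K\"ahler class $\alpha$, hence so is $\Omega^{[1]}_Y$, with slope zero. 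But the determinant of $(\pi_*h^*\Omega^1_Z)^{**}$ is the reflexive pushforward of $h^*\omega_Z$, which has positive slope since $\omega_Z$ is big. This contradicts semistability. No subadditivity, no projectivity, no case split on $\kappa(F)$.
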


\begin{proof}
Since a finite \'etale cover of $Y$ is again a compact klt K\"ahler space with numerically trivial canonical divisor, it suffices to prove the lemma for $Y$ itself.

To this end, assume that there exists a dominating rational map $ Y\dashrightarrow Z$ to a smooth variety of general type. Take a desingularisation $\pi \colon \overline Y \to Y$ which resolves the indeterminacies, and let $h\colon\overline{Y}\to Z$ be the induced morphism: 
$$\xymatrix{
\overline{Y} \ar[d]_{\pi} \ar[rrd]^{h}&& \\
Y \ar@{.>}[rr]&& Z. 
}
$$
Then the inclusion $h^*\Omega^1_{Z} \subseteq \Omega^1_{\overline{Y}}$ induces the inclusion
\begin{equation}\label{eq:556}
(\pi_{*}h^{*} \Omega^1_{Z})^{**} \subseteq (\pi_{*}\Omega^1_{\overline{Y}})^{**}=\Omega^{[1]}_Y,
\end{equation}
where $\Omega^{[1]}_Y$ is the sheaf of reflexive differentials on $Y$. Fix a K\"ahler class $\alpha$ on $Y$. By \cite[Theorem A]{Gue16} the tangent sheaf $T_Y$ is $\alpha$-semistable, hence so is its dual $\Omega^{[1]}_Y$. If $\mu_\alpha$ denotes the slope with respect to $\alpha$, then \eqref{eq:556} gives
\begin{equation*}
\mu_\alpha \big((\pi_{*}h^{*} \Omega^1_{Z})^{**}\big) \leq \mu_\alpha \big(\Omega^{[1]}_{Y}\big)=0.
\end{equation*}
On the other hand, since $\det \big((\pi_{*}h^{*} \Omega^1_{Z})^{**}\big) =  (\pi_{*}h^{*} \omega_Z)^{**}$ and the line bundle $\omega_Z$ is big, we have $\mu_\alpha \big((\pi_{*}h^{*} \Omega^1_{Z})^{**}\big)>0$, a contradiction which finishes the proof.
\end{proof}

\begin{prop}\label{m-gl_rep}
Let $Y$ be a compact klt K\"ahler space with numerically trivial canonical divisor and let $\tau \colon \pi_{1}(Y) \to \GL(N, \mathbb{C})$ be a linear representation. Then $\Image(\tau)$ is virtually abelian.
\end{prop}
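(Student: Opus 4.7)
We argue by contradiction: assume that $\Image(\tau)$ is not virtually abelian, and derive a contradiction with Lemma \ref{lem:dominatinggeneraltype} using non-abelian Hodge theory. The guiding principle is that a non-virtually-abelian linear representation of the fundamental group of a compact K\"ahler space should force the existence of a dominant rational map to a positive-dimensional variety of general type, via a Shafarevich-type factorisation.

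First, a standard reduction. By Selberg's lemma, we replace $\pi_1(Y)$ by a finite-index subgroup (corresponding to a finite \'etale cover $Y'\to Y$, which is again a compact klt K\"ahler space with numerically trivial canonical divisor, and whose $\pi_1$ has the same virtual-abelianness properties) so that $\Image(\tau)$ becomes torsion-free. Passing to the semisimplification of $\tau$ does not affect the conclusion: the image of the original $\tau$ is virtually abelian if and only if the image of its semisimplification is, because the unipotent radical of the Zariski closure is itself virtually abelian. Thus we may assume that $\tau$ is semisimple with Zariski closure $G\subseteq \GL(N,\C)$ a connected reductive group, and we decompose $G = Z(G)^\circ\cdot G^{ss}$ into its central torus and semisimple part. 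The projection to the torus $G/G^{ss}$ is a direct sum of characters $\pi_1(Y)\to \C^*$, which factor through $H_1(Y,\Z)$ and hence through the Albanese of $Y$; such images are automatically abelian. Therefore the non-virtual-abelianness must come from the composition $\tau^{ss}\colon \pi_1(Y)\to G^{ss}$, which must have Zariski-dense image in $G^{ss}$.

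Now we invoke the Corlette--Simpson non-abelian Hodge correspondence on a resolution $\widetilde Y\to Y$: the semisimple representation $\tau^{ss}$ corresponds to a polystable Higgs bundle $(E,\theta)$ on $\widetilde Y$ with vanishing Chern classes, equivalently a tame pluriharmonic map from the universal cover. By the positivity results of Zuo, resp. Eyssidieux--Katzarkov--Pantev--Ramachandran on Shafarevich morphisms associated with linear representations, the Zariski-density of the image of $\tau^{ss}$ in the semisimple group $G^{ss}$ implies that the $\tau^{ss}$-Shafarevich map $\operatorname{sh}^{\tau^{ss}}\colon\widetilde Y\dashrightarrow \operatorname{Sh}^{\tau^{ss}}(\widetilde Y)$ has positive-dimensional base, on which the pulled-back Hodge-theoretic line bundle is big. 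After a further finite \'etale cover this yields a dominant rational map from a finite \'etale cover of $Y$ to a positive-dimensional variety of general type, which contradicts Lemma \ref{lem:dominatinggeneraltype} and completes the proof.

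\textbf{Main obstacle.} The delicate step is to run the Corlette--Simpson and Zuo machinery rigorously on the singular klt K\"ahler space $Y$: while the smooth projective version is classical, here one must either descend the resulting positivity from a resolution $\widetilde Y\to Y$ (using that resolutions of compact klt K\"ahler spaces with trivial canonical class are still compact K\"ahler and the reflexive differentials framework interfaces well with Higgs bundle positivity), or directly exploit the K\"ahler--Einstein metrics and the semistability of $T_Y$ due to Guenancia that already drove Lemma \ref{lem:dominatinggeneraltype}. Carrying out this descent, and verifying that the Shafarevich base produced is genuinely of general type (rather than merely of positive Kodaira dimension, which would require an additional Iitaka-type reduction), is the technical heart of the argument.
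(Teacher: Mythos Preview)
Your overall strategy matches the paper's: pass to a semisimple quotient of the representation, invoke Shafarevich-map machinery to produce a dominant map to a variety of general type, and contradict Lemma \ref{lem:dominatinggeneraltype}. The paper cites \cite[Th\'eor\`eme 1]{CCE15} for the Shafarevich step, which handles the K\"ahler setting you flag in your ``Main obstacle''.

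There is, however, a genuine gap in your reduction. You assert that ``the image of the original $\tau$ is virtually abelian if and only if the image of its semisimplification is, because the unipotent radical of the Zariski closure is itself virtually abelian.'' Both claims are false. The unipotent radical of a linear algebraic group over $\C$ is nilpotent but need not be virtually abelian: the group of $3\times 3$ unipotent upper-triangular matrices is connected and $2$-step nilpotent, hence has no proper finite-index subgroup at all, and is not abelian. Concretely, the integer Heisenberg group $H_3(\Z)$ embeds faithfully in $\GL(3,\C)$ with trivial semisimplification, yet $H_3(\Z)$ is not virtually abelian. So once you show that $\tau^{ss}$ has finite image, the most you can conclude directly is that $\Image(\tau)$ is \emph{virtually solvable} (an extension of a finite group by a nilpotent one), not virtually abelian.

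The paper deals with precisely this point by a second pass: after obtaining virtual solvability from the semisimple piece, it applies Lemma \ref{lem:dominatinggeneraltype} \emph{again} together with \cite[Corollaire 4.2]{CCE15} (a separate result about solvable linear quotients of K\"ahler groups) to upgrade from virtually solvable to virtually abelian. Your argument is missing this second step; without it the proof does not close.
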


\begin{proof}
The strategy is similar to \cite[Theorem 7.8]{Cam04}, \cite[Proposition 3.9]{LOWYZ} and \cite[Proposition 3.3]{GKP21}, which are based on \cite{Zuo99, Mok92, CCE15}. Here we give a detailed proof. 

\medskip

\emph{Step 1.}
We first note that if we have a finite \'etale cover $\nu\colon Y' \to Y$ together with the induced representation
$$ \tau' \colon \pi_{1}(Y') \xrightarrow{\quad \nu_{*} \quad} \pi_{1}(Y) \xrightarrow{\quad \tau \quad}\GL(N, \mathbb{C}), $$
then $\Image(\tau')$ is virtually abelian if and only if $\Image(\tau)$ is. Therefore, we may and will freely replace $Y$ with a finite \'etale cover throughout the proof. 

\medskip

\emph{Step 2.}
Let $H$ be the Zariski closure of $\Image{(\tau)} \subseteq \GL(N, \mathbb{C})$ and let $\Rad(H)$ be the solvable radical of $H$. Note that $H /\Rad(H)$ is a linear algebraic group by \cite[11.5]{Hum75}, and we fix an embedding $H /\Rad(H) \subseteq \GL(M, \mathbb{C})$. Consider the induced representation
\begin{equation} \label{m-eq1}
\overline{\tau}\colon \pi_1(Y) \stackrel{\tau}{\longrightarrow} H \twoheadrightarrow H/\Rad(H)\hookrightarrow \GL(M, \mathbb{C}). 
\end{equation} 
Then we claim that, after replacing $Y$ with a finite \'etale cover, we may assume that $H$ is connected, that the image $\Image ({\bar \tau}) $ is torsion-free, and that the Zariski closure of $\Image (\overline{\tau})  \subseteq \GL(M, \mathbb{C})$ is semisimple.

Indeed, the linear algebraic group $H$ has only finitely many connected components, and let $H_{0}$ be the connected component of $H$ containing the identity element. Then $H/H_{0}$ is a finite group, and thus we may assume $H$ is connected by passing to the \'etale cover corresponding to the kernel of the composite $ \pi_{1}(Y) \to H \twoheadrightarrow H/H_{0}$. By Selberg's lemma, the image $\Image{(\bar \tau)}$ has a torsion-free subgroup $S$ of finite index. Hence, we may assume that $\Image{(\bar \tau)}$ is torsion-free by passing to the \'etale cover corresponding to the kernel of the composite $ \pi_{1}(Y) \to \Image{(\bar \tau)} \twoheadrightarrow \Image{(\bar \tau)}/S$. The last part of the claim follows since $H/\Rad(H)$ is semisimple.

\medskip

\emph{Step 3.}
Let $K$ be the kernel of the map in \eqref{m-eq1}. 
By \cite[Th\'eor\`eme 1]{CCE15} there exists a smooth $K$-Shafarevich variety $Z$ of general type together with the corresponding $K$-Shafarevich map $\operatorname{sh}^K_Y\colon Y \dashrightarrow Z$. By Lemma \ref{lem:dominatinggeneraltype} we have $\dim Z =0$. Then $Y$ itself is the fibre of $\operatorname{sh}^K_Y$, hence the group $K$ is of finite index in $\pi_1(Y)$ by Definition \ref{dfn:Shafarevich}, and therefore $\Image (\overline{\tau})$ is a finite group. Thus, $\Image (\tau)$ is virtually solvable. By Lemma \ref{lem:dominatinggeneraltype} again and by \cite[Corollaire 4.2]{CCE15}, we conclude that $\Image (\tau)$ is virtually abelian.
\end{proof}

Now, let $f\colon X \to Y$ be a locally constant fibration with the fibre $F$. By the proof of \cite[Proposition 2.5]{MW21} we have the following diagram: 
\begin{equation}\label{loc-fib}
\begin{gathered}\xymatrix@C=40pt@R=30pt{
X  \ar[d]_{f} &  \Unv{Y} \times F \ar[d]_{p} \ar[l]^{\beta \quad }  \ar[r]^{\quad q} & F  \\ 
Y   & \ar[l]^{\alpha} \Unv{Y},  &  \\   
}\end{gathered}
\end{equation}
where the square is Cartesian, and $p$ and $q$ are the natural projections of the product $\Unv{Y} \times F$. 

The following proposition improves on \cite[Lemma 2.4]{MW21} and is the heart of the proof of Theorem \ref{thm:reduction}. Note that the proposition is invalid when $f$ is merely analytically locally trivial.

\begin{prop}\label{m-prop}
Let $f\colon X \to Y$ be a locally constant fibration of normal projective varieties with the fibre $F$ and the representation $\rho\colon\pi_1(Y)\to\Aut(F)$ as in Definition \ref{m-defi}, and assume the notation from the diagram \eqref{loc-fib}. Let $L $ be a Cartier divisor on $X$ such that $f_*\OO_X(L)\neq0$ and assume that $H^1(F,\OO_F) = 0$.
Then there exist a Cartier divisor $D$ on $F$ and 
a $\Q$-Cartier $\Q$-divisor $B$ on $Y$ such that: 
\begin{enumerate}[\normalfont (a)]
\item the line bundle $\OO_{F}(D)$ is $\pi_{1}(Y)$-equivariant and $\beta^* L  \sim_{\mathbb{Q}} \beta^* f^* B + q^* D$,

\item there exists a positive integer $m_0$ such that for any positive integer $m$ divisible by $m_{0}$, the sheaf $f_*\OO_X(m(L-f^* B)) $ is a flat locally free sheaf on $Y$ which is given by the representation
\begin{equation*}
\tau\colon \pi_1(Y)  \to \GL \big(H^0(F, \OO_F(mD)) \big)
\end{equation*}
induced by $\rho$. 
\end{enumerate}
Assume additionally that $Y$ has rational singularities and that $\Image{(\tau)}$ is virtually solvable. 
Then for any positive integer $m$ divisible by $m_{0}$ there exists a flat line bundle $\mathcal M$ on $Y$, depending on $m$, such that 
$$H^0\big(X,\OO_X(m(L-f^* B))\otimes f^*\mathcal M\big)\neq0.$$
\end{prop}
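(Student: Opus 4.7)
By part (b), $\mathcal E_m := f_*\OO_X\bigl(m(L-f^*B)\bigr)$ is a flat vector bundle on $Y$ associated with a representation $\tau\colon\pi_1(Y)\to\GL(V_m)$ on $V_m:=H^0(F,\OO_F(mD))$. The projection formula and the correspondence between flat bundles and representations of $\pi_1(Y)$ yield
\[ H^0\bigl(X,\OO_X(m(L-f^*B))\otimes f^*\mathcal M\bigr)\;\simeq\;H^0(Y,\mathcal E_m\otimes\mathcal M)\;\simeq\;(V_m\otimes\mathbb C_\chi)^{\pi_1(Y)}, \]
where $\mathcal M$ is the flat line bundle on $Y$ corresponding to a character $\chi\colon\pi_1(Y)\to\mathbb C^*$. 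It therefore suffices to produce, for each sufficiently divisible $m$, a one-dimensional $\pi_1(Y)$-invariant subspace of $V_m$; its character will then determine $\chi^{-1}$ and hence $\mathcal M$.

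By the virtual solvability of $\Image(\tau)$, there is a finite-index normal subgroup $N\le\pi_1(Y)$, of some index $d$, such that $\tau(N)$ is contained in a connected solvable algebraic subgroup $G$ of $\GL(V_{m'})$. By the Lie--Kolchin theorem applied to $G$, there exist a non-zero $v\in V_{m'}$ and a character $\tilde\chi\colon N\to\mathbb C^*$ with $\tau(n)v=\tilde\chi(n)v$ for all $n\in N$. Consequently, the $\pi_1(Y)$-orbit of the line $\mathbb C v$ in $\mathbb P(V_{m'})$ consists of finitely many lines $\mathbb C v_1,\dots,\mathbb C v_k$, with $k\mid d$ by the orbit-stabiliser theorem. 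Using the $\pi_1(Y)$-equivariant multiplication maps $V_{m'}^{\otimes\ell}\to V_{\ell m'}$ of sections of $\OO_F(m'D)$, define
\[ w\;:=\;(v_1\cdot v_2\cdots v_k)^{d/k}\;\in\;V_{dm'}. \]
Then $w$ is a non-zero section of $\OO_F(dm'D)$, being an integer power of a product of non-zero sections of a line bundle. Writing $\tau(\gamma)v_i=c_{i,\gamma}\,v_{\sigma_\gamma(i)}$ for scalars $c_{i,\gamma}\in\mathbb C^*$ and permutations $\sigma_\gamma$ of $\{1,\dots,k\}$, the commutativity of multiplication gives
\[ \tau_{dm'}(\gamma)\,w\;=\;\Bigl(\textstyle\prod_{i=1}^{k}c_{i,\gamma}\Bigr)^{\!d/k}\,w, \]
so $\mathbb C w\subseteq V_{dm'}$ is a one-dimensional $\pi_1(Y)$-invariant subspace.

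To finish, since $d$ is independent of $m'$, we enlarge the constant $m_0$ from part (b) by the factor $d$, which is permissible because part (b) persists under enlarging $m_0$. For any $m$ divisible by the new $m_0$, setting $m':=m/d$ (divisible by the original $m_0$) and running the construction above furnishes a one-dimensional $\pi_1(Y)$-invariant subspace of $V_m=V_{dm'}$; we then take $\mathcal M$ to be the flat line bundle on $Y$ associated with the inverse of its character to obtain the non-vanishing. \emph{The principal obstacle} is that Lie--Kolchin yields only a common eigenvector for the subgroup $N$, not for $\pi_1(Y)$ itself, and these would coincide only if the orbit size $k$ were $1$, which need not hold. The multiplication-of-sections device resolves this by converting the $\pi_1(Y)$-orbit of the eigenvector into a genuine $\pi_1(Y)$-invariant element of $V_{dm'}$, using that products of non-zero sections of a line bundle are non-zero, that multiplication is $\pi_1(Y)$-equivariant, and that $k\mid d$ so that the $(d/k)$-th power lands in the integer-indexed space $V_{dm'}$.
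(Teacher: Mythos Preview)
Your proposal addresses only the last assertion of the proposition and takes parts (a) and (b) for granted. In the paper these occupy the bulk of the argument (Steps~1 and~2): one must produce the divisor $D$ on $F$, show that $\OO_F(D)$ is $\pi_1(Y)$-equivariant, pass to a multiple so that $\OO_F(m_0D)$ is $H$-\emph{linearisable} (not merely equivariant), build the Cartier divisor $L_0$ on $X$ and hence $B$, and then verify via flat base change that $f_*\OO_X\big(m(L-f^*B)\big)$ is the flat bundle associated with the representation on $H^0(F,\OO_F(mD))$ induced by $\rho$. None of this is automatic.

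For the final claim your route is genuinely different from the paper's. The paper replaces $Y$ by a finite \'etale cover $Y'$ on which $\Image(\tau)$ becomes connected and solvable, invokes Lie--Kolchin there to get a flat line subbundle of $f'_*\OO_{X'}\big(m(L'-f'^*B')\big)$, and appeals to Lemma~\ref{lem:descenteff}(c) (this is where the rational-singularities hypothesis on $Y$ enters). You instead stay on $Y$: Lie--Kolchin gives an $N$-eigenvector in $V_{m'}$, the $\pi_1(Y)$-orbit of its line has size $k\mid d$, and the product $(v_1\cdots v_k)^{d/k}$ in the section ring $\bigoplus_m H^0(F,\OO_F(mD))$ is a $\pi_1(Y)$-eigenvector in $V_{dm'}$. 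This is a neat symmetrisation trick and, notably, never uses that $Y$ has rational singularities. Two small points deserve care: the displayed isomorphism $H^0(Y,\mathcal E_m\otimes\mathcal M)\simeq (V_m\otimes\C_\chi)^{\pi_1(Y)}$ is in general only an inclusion $\supseteq$ (which is all you need), and the $\pi_1(Y)$-equivariance of the multiplication maps $V_{m_1}\otimes V_{m_2}\to V_{m_1+m_2}$ should be justified by noting that they arise from the sheaf morphisms $f_*\OO_X(m_1(L-f^*B))\otimes f_*\OO_X(m_2(L-f^*B))\to f_*\OO_X((m_1+m_2)(L-f^*B))$ on $Y$.

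There is, however, a genuine gap. You assert that ``$d$ is independent of $m'$'' and then, for each $m$ divisible by $dm_0$, set $m':=m/d$ and rerun the construction. But the normal subgroup $N$ and its index $d$ are obtained from the virtual solvability of $\Image(\tau_{m'})$, and nothing in the hypotheses bounds this index uniformly in $m'$; rerunning the construction at a new $m'$ may produce a different $d$. The fix is simple and actually makes the argument cleaner: run the construction \emph{once} at $m'=m_0$ to obtain a $\pi_1(Y)$-eigenvector $w\in V_{dm_0}$ with character $\chi$, set the new $m_0$ to be $dm_0$, and for any $m$ divisible by it take $w^{\,m/(dm_0)}\in V_m$, which is a $\pi_1(Y)$-eigenvector with character $\chi^{\,m/(dm_0)}$ by the equivariance of multiplication. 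This yields the desired flat line bundle $\mathcal M$ for every such $m$.
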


\begin{proof}
In the proof, for a point $\bar y \in \Unv{Y} $ we often identify $\{\bar y\} \times F \subseteq \Unv{Y} \times F$ with $F$ under the isomorphism 
$$q|_{\{\bar y\} \times F}\colon \{\bar y\} \times F \to F.$$ 

We divide the proof in four steps.

\medskip

\emph{Step 1.}
In this step we show (a). The numerical class of the line bundle $\OO_{\{\bar y\} \times F} (\beta^*L) $ on $F$ is independent of the choice of $\bar y \in \Unv{Y} $, hence so is the linear equivalence class of $\OO_{\{\bar y\} \times F} (\beta^{*}L) $ as $H^1(F,\OO_F) = 0$. Then there exists a Cartier divisor $D$ on $F$, independent of $\bar y \in \Unv{Y}$, such that $D \sim  (\beta^*L)|_{\{\bar y\} \times F}$, thus the line bundle
$$ \OO_{\Unv{Y}\times F}(\beta^* L  - q^* D) $$
is trivial on every fibre $\{\bar y\} \times F$ of $p\colon \Unv{Y}\times F \to  \Unv{Y}$. 

By applying the same argument to some ample Cartier divisor $A$ on $X$ instead of $L$, we construct an ample line bundle $D_A$ on $F$. 

Now, let $\gamma \in \pi_1(Y)$  be a closed loop and set $g:=\rho(\gamma) \in \Aut(F)$. Since $X \simeq (\Unv{Y}\times F)/\pi_1(Y)$ over $Y$, the line bundle $ \OO_X(\beta^*L)$ is equivariant under the action of $\pi_1(Y)$ via $\rho\colon\pi_1(Y)\to\Aut(F)$. Hence, we have
$$\beta^* L \sim (\gamma, g)\cdot (\beta^* L),$$ 
where $(\gamma, g) \in \Aut(\Unv{Y}\times F)$. Restricting this relation to $\{\bar y\} \times F$, we obtain 
$$ D \sim (\beta^* L) |_{\{\bar y\} \times F} \sim (\gamma, g)\cdot (\beta^* L)|_{\{\bar y\} \times F} \sim  g\cdot (\beta^* L)|_{\{ \gamma \cdot \bar y \} \times F} \sim g \cdot D. $$
Thus, the line bundle $\OO_{F}(D)$ is $\pi_1(Y)$-equivariant. 

Analogously, the ample line bundle $\OO_{F}(D_A)$ is $\pi_1(Y)$-equivariant, hence the Zariski closure $H$ of $\Image (\rho)$ is a linear algebraic group by \cite[Lemma 2.3]{Bri22}. Then, since $H$ has only finitely many connected components, by the proof of \cite[Proposition 2.4]{KKLV89} there exists a positive integer $m_0$ such that the line bundle $\OO_{F}(m D)$ is $H$-linearisable for any positive integer $m$ divisible by $m_0$.

The quotient $\big(q^* \OO_F(m_0 D) \big)/\pi_1(X)$ of the total space of the line bundle $q^* \OO_F(m_0 D)$ determines a line bundle on $X$. Take a Cartier divisor $L_0$ on $X$ such that 
$$\OO_X(L_0)=\big ( q^* \OO_F(m_0 D) \big)/\pi_1(X).$$ 
Then $\beta^*\OO_X(L_0)\simeq q^*\OO_F(m_0 D)$ by definition, hence
\begin{equation}\label{eq:99}
\beta^*L_0 \sim q^* m_0D.
\end{equation}
Since $ D \sim  (\beta^*L)|_{\{\bar y\} \times F}$, we obtain from \eqref{eq:99} that the line bundle $\OO_X(m_0L-L_0)$ is trivial on any fibre of $f\colon X \to Y$ via the isomorphism
$$\beta|_{\{\bar y\} \times F}\colon \{\bar y\} \times F \to f^{-1}\big(\alpha(\bar y)\big).$$ 
Thus, by Lemma \ref{lem:End}, there exists a Cartier divisor $B'$ on $Y$ such that
\begin{equation}\label{eq:99a}
m_0L-L_0\sim f^*B'.
\end{equation}
Setting $B:=\frac{1}{m_0}B'$, we finish the proof of (a) by \eqref{eq:99} and \eqref{eq:99a}. 

\smallskip

For the remainder of the proof, $m$ is a positive integer divisible by $m_0$.

\medskip

\emph{Step 2.}
In this step we show (b). By (a) we have 
\begin{align*}
p_*\OO_{\Unv{Y}\times F} \big(m\beta^* (L - f^* B)\big)&= p_*\OO_{\Unv{Y}\times F} (mq^* D) \\
&= \OO_{\Unv Y}\otimes_\C H^0\big(F, \OO_F(mD)\big), 
\end{align*}
where the second equality follows since $p$ and $q$ are the projections. On the other hand, by the flat base change we have 
$$ p_*\beta^*\OO_{\Unv{Y}\times F} \big(m(L - f^* B)\big)\simeq \alpha^*f_*\OO_X\big(m(L - f^* B)\big). $$
Thus,
\begin{equation}\label{m-flat}
\alpha^*f_*\OO_X\big(m(L - f^* B)\big) \simeq \OO_{\Unv Y}\otimes_\C H^0\big(F, \OO_F(mD)\big). 
\end{equation}
The fundamental group $\pi_1(Y)$ naturally acts on $\alpha^*f_*\OO_X\big(m(L - f^* B)\big)$, and $f_*\OO_X\big(m(L - f^* B)\big)$ is the quotient of $\alpha^*f_*\OO_X\big(m(L - f^* B)\big)$ by the $\pi_1(Y)$-action from the definition of locally constant fibrations. 
The first part of (b) follows from the equivalent definitions of flat locally free sheaves \cite[Chapter I]{Kob87}.

For the second part of (b), we show that  
the $\pi_{1}(Y)$-action on $$\OO_{\Unv Y}\otimes_\C H^0\big(F, \OO_F(mD)\big)$$
coincides with the linear action induced by $\tau$. 
Indeed, by construction, for a point $y\in Y$ with the corresponding fibre $X_{y}=f^{-1}(y)$ and for any $\bar y \in \alpha^{-1}(y)$, the isomorphism \eqref{m-flat} is obtained by identifying $\OO_{X_y}(mL)$  with  $\OO_F(mD)$  via the isomorphism 
\begin{equation*}
h_{\bar y} \colon X_{y} \xrightarrow[\quad (\beta|_{{\{\bar y\}} \times F})^{-1} \quad ]{\simeq } {\{\bar y\}} \times F \xrightarrow[p|_{{\{\bar y\}} \times F}]{\quad \simeq  \quad} F.
\end{equation*}
The isomorphism $h_{\bar y}$ depends on $\bar y $, but is determined by $\rho$: indeed, for two points $\bar y_1,\bar y_2\in\alpha^{-1}(y)$ consider a closed loop $\gamma \in \pi_1(Y)$ with $\gamma  \cdot \bar y_1=\bar y_2 $. 
Then $h_{\bar y_1} \circ h_{\bar y_2} ^{-1}$ can be described by $\rho (\gamma ) \in \Aut(F)$, hence the claim follows. 

\medskip

\emph{Step 3.}
For the remainder of the proof we show the last claim of the proposition. We first show that we may assume that the Zariski closure $H$ of $\Image{(\tau)}$ is connected and solvable. 

As is Step 2 in the proof of Proposition \ref{m-gl_rep}, the connected component $H_0$ of $H$ containing the identity element is of finite index in $H$. By assumption, there exists a solvable normal subgroup $S \subseteq \Image{(\tau)}$ of finite index. Then the kernel $K$ of the composite $\pi_1(Y) \to \Image{(\tau)} \twoheadrightarrow \Image{(\tau)}/(S\cap H_0)$ is of finite index in $\pi_1(Y)$, hence defines a finite \'etale cover $\alpha \colon Y' \to Y$. Setting $X':=Y' \times_{Y} X$, then $X$ and $X'$ are  the quotients of the fibre product $\Unv{Y} \times_{Y} X  \simeq  \Unv{Y} \times F$ by the $\pi_{1}(Y)$-action and the $K$-action, respectively, hence we obtain the Cartesian diagrams
$$\xymatrix@C=40pt@R=30pt{
X \ar[d]_{f} &  X' \ar[d]_{f'} \ar[l]^{\beta'}  & \Unv{Y} \times F\ar[l]^{\beta''}\ar[d]   
\ar@/_18pt/[ll]_{\beta} \ar[d]_{p} \ar[r]_{\quad q} &F\\ 
Y  & \ar[l]  Y' \ar[l]^{\alpha'} &  \Unv{Y}. \ar[l]^{\alpha''} \ar@/^20pt/[ll]^{\alpha} \ar[l]&
}$$
The induced morphism $f' \colon X' \to Y'$ is a locally constant fibration. The Cartier divisors $\alpha'^* B$ and $D$ satisfy (a) and (b) of the proposition applied to $\alpha'^* L$ and $f'$. In particular, the sheaf $f'_*\beta'^*\OO_{X'}\big(m(L-f^* B)\big) $ is a flat locally free sheaf which is determined by the induced representation
$$ \tau' \colon \pi_1(Y') \xrightarrow{\quad \alpha'_* \quad} \pi_1(Y) \xrightarrow{\quad \tau \quad} \GL \big(H^0(F, \OO_F(mD)) \big). $$
The image $\Image(\tau')$ is connected and solvable by the construction of $Y'$. Therefore, by replacing $f$ with $f'$ and by Lemma \ref{lem:descenteff}(c), we may assume that $\Image(\tau)$ is connected and solvable. Then the Zariski closure of $\Image(\tau)$ is also solvable, see for instance \cite[Exercise 17.2]{Hum75}. 

\medskip

\emph{Step 4.}
Now, by the Lie-Kolchin theorem, see \cite[17.6 or 21.2]{Hum75},
after fixing a suitable basis of $H^0\big(F, \OO_F(mD)\big)$, 
the representation matrices $\tau(\gamma  )\in \GL \big(H^0(F, \OO_F(mD))\big)$ 
are upper triangular for any $\gamma \in \pi_1(Y)$. If $r=\dim H^0\big(F, \OO_F(mD)\big)$, then the representations $\pi_{1}(Y) \to \mathbb{C}^{*}$ obtained from each diagonal component of $\tau(\gamma )$ determine flat line bundles $\mathcal M_1,\dots,\mathcal M_r$ on $Y$. Then, by (b) and by considering sub-matrices of $\tau(\gamma)$, we obtain a filtration of locally free sheaves 
$$ 0=\cal E_{0} \subseteq \cal E_{1} \subseteq \cdots  \subseteq \cal E_{r-1} \subseteq \cal E_{r}=f_*\OO_X\big(m(L-f^* B)\big) $$
such that $\cal E_{k}/\cal E_{k-1}\simeq\cal M_{k}$ for each $k$. In particular, 
$$\OO_Y \subseteq f_*\OO_X\big(m(L-f^* B)\big) \otimes \mathcal M_1^{-1} ,$$
hence $H^0\big(X,\OO_X(m(L-f^* B))\otimes f^*\mathcal M_1^{-1}\big)\neq0$, as desired. 
\end{proof}

We are now ready to prove Theorems \ref{thm:reduction} and \ref{thm:mainthm_numdim=1partB}.

\begin{proof}[Proof of Theorem \ref{thm:reduction}]
In the proof we adopt the notation from the diagram \eqref{loc-fib}. Let $F$ be the fibre of $f$. The pair $(X, \Delta) $ is isomorphic to the quotient  $\big(\Unv{Y}\times F, q^* (\Delta|_F)\big)/\pi_1(Y)$, and $\beta \colon \Unv{Y} \times F \to X$ is an \'etale cover. Hence, $\beta^*\Delta=q^* (\Delta|_F)$ and 
$$ \beta^*K_X\sim_\Q K_{\Unv{Y} \times F}\sim_\Q p^*K_{\Unv{Y}} +  q^*K_F, $$
and therefore
\begin{equation}\label{eq:558}
\beta^*(K_X+\Delta)\sim_\Q p^*K_{\Unv{Y}} +  q^*(K_F+\Delta|_F). 
\end{equation}

Now, we have $q(F)=0$ by \eqref{eq:557}, and by assumption there exists a positive integer $m$ such that the divisor $L:={-}m(K+\Delta)$ is Cartier and $H^0\big(F,\OO_F(L)\big)\neq0$, hence $f_* \OO_X(L)\neq0$. Therefore, there exist Cartier divisors $B$ on $Y$ and $D$ on $F$ as in Proposition \ref{m-prop} applied to $L$ and $f$. By \eqref{eq:558} and by the proof of Proposition \ref{m-prop}(a) one sees immediately that $B\sim{-}mK_Y$ and $D\sim{-}(K_F+\Delta|_F)$. In particular, $B\sim0$, and hence, by Proposition \ref{m-prop}(b), the sheaf $f_*\OO_X(L)$ is a flat locally free sheaf on $Y$ which is given by a representation $\tau\colon \pi_1(Y)  \to \GL \big(H^0(F, \OO_F(D)) \big)$. By Proposition \ref{m-gl_rep} the image $\Image (\tau)$ is virtually abelian, hence $L$ is num-effective by the last claim in Proposition \ref{m-prop}. This finishes the proof. 
\end{proof}

\begin{proof}[Proof of Theorem \ref{thm:mainthm_numdim=1partB}]
	By Theorem \ref{thm:MW}(a) there exists a quasi-\'etale cover $ \mu \colon X' \to X $ such that, if we set $\Delta':=\mu^*\Delta$, then $ X' $ admits a locally constant MRC fibration $ \pi \colon X' \to Z $ with respect to $(X',\Delta')$, where $ Z $ is a projective canonical variety with $ K_Z \sim 0 $. Then
	$$K_{X'}+\Delta'\sim_\Q \mu^*(K_X+\Delta),$$
	and hence $ (X',\Delta') $ is terminal by \cite[Proposition 5.20]{KM98}, ${-}(K_{X'}+\Delta')$ is nef and $ \nu \big( X', {-} (K_{X'}+\Delta')\big) = 1 $. Therefore, we may replace $(X,\Delta)$ with $(X',\Delta')$ by Lemma \ref{lem:descenteff}, and thus we may assume that $ X $ admits a locally constant MRC fibration $ \pi \colon X \to Z $ with respect to $(X,\Delta)$, where $ Z $ is a projective canonical variety with $ K_Z \sim 0 $. Since $ X $ is uniruled by Lemma \ref{lem:nefanticanimpliesuniruled}, we have $ 0 \leq \dim Z < \dim X $.
	
	Let $F$ be the fibre of $\pi$. Then $F$ is rationally connected and 
	$$\nu\big(F,{-}(K_F+\Delta|_F)\big) = \nu\big(X,{-}(K_X+\Delta)\big)=1$$
	by Theorem \ref{thm:MW}(b). Therefore, $\kappa\big(F,{-}(K_F+\Delta|_F)\big)\geq0$ by Theorem \ref{thm:mainthm_numdim=1partA} and by \eqref{eq:557a}, as the numerical and $\Q$-linear equivalence coincide on a rationally connected variety. We conclude by Theorem \ref{thm:reduction}.
\end{proof}

Finally, we prove the following result valid in every dimension, which will be used in the proofs of Theorems \ref{thm:mainthm_dim=3} and \ref{thm:mainthm_dim=4}.

\begin{thm} \label{thm:nu2_pair_uniruled}
	Let $ (X,\Delta) $ be a projective klt pair of dimension $ n $ such that $ {-}(K_X+\Delta) $ is nef. Assume that $ \nu \big(X,{-}(K_X+\Delta) \big) = n-1 $ and that $ X $ is not rationally connected. Then $ {-}(K_X+\Delta) $ is num-effective.
\end{thm}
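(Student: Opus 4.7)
The plan is to combine the Matsumura--Wang structural reduction from Theorem \ref{thm:MW} with the fibration-based Nonvanishing principle in Theorem \ref{thm:reduction}. First, two trivial reductions: we may assume $n\geq 2$, since for $n=1$ the assumption $\nu=n-1=0$ forces ${-}(K_X+\Delta)\equiv 0$, which is num-effective tautologically. Moreover, $X$ must be uniruled, since otherwise Lemma \ref{lem:nefanticanimpliesuniruled} would give $\Delta=0$ and $K_X\sim_\Q 0$, contradicting the hypothesis $\nu\big(X,{-}(K_X+\Delta)\big)=n-1\geq 1$.

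Next, apply Theorem \ref{thm:MW}(a) to produce a quasi-\'etale cover $\mu\colon X'\to X$ such that, setting $\Delta':=\mu^*\Delta$, the variety $X'$ admits a locally constant MRC fibration $\pi\colon X'\to Z$ with respect to $(X',\Delta')$, with $Z$ canonical and $K_Z\sim 0$. The pair $(X',\Delta')$ remains klt, the divisor ${-}(K_{X'}+\Delta')\sim_\Q\mu^*\bigl({-}(K_X+\Delta)\bigr)$ is nef, and by Lemma \ref{lem:descenteff}(c) (using that $X$ is klt, hence has rational singularities) it suffices to show that ${-}(K_{X'}+\Delta')$ is num-effective on $X'$. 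The crux is a dimensional pigeonhole for the fibre $F$ of $\pi$. Since rational connectedness descends along any finite surjective morphism of normal varieties, $X'$ is not rationally connected, so the MRC base satisfies $\dim Z\geq 1$ and therefore $\dim F\leq n-1$. On the other hand, Theorem \ref{thm:MW}(b) gives
$$\nu\big(F,{-}(K_F+\Delta'|_F)\big)=\nu\big(X',{-}(K_{X'}+\Delta')\big)=n-1,$$
which forces $\dim F=n-1$ and $\dim Z=1$. Thus the nef divisor ${-}(K_F+\Delta'|_F)$ on $F$ has numerical dimension equal to $\dim F$, hence it is big, and in particular $\kappa\big(F,{-}(K_F+\Delta'|_F)\big)\geq 0$.

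The hypotheses of Theorem \ref{thm:reduction} are now fulfilled for $\pi\colon X'\to Z$, yielding the num-effectivity of ${-}(K_{X'}+\Delta')$; descending along $\mu$ via Lemma \ref{lem:descenteff}(c) completes the proof. The main step, and really the only non-formal one, is the pigeonhole argument pinning down $\dim Z=1$; once this is in place, bigness on the fibre is automatic and the existing machinery finishes the job.
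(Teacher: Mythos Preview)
Your proof is correct and follows essentially the same route as the paper: reduce via Theorem \ref{thm:MW} and Lemma \ref{lem:descenteff}(c) to a locally constant MRC fibration $X'\to Z$, use that $X'$ is not rationally connected to get $\dim Z>0$, invoke Theorem \ref{thm:MW}(b) to see $\nu\big(F,{-}(K_F+\Delta'|_F)\big)=n-1\geq\dim F$, hence bigness on the fibre, and conclude with Theorem \ref{thm:reduction}. Your extra observation that $\dim Z=1$ is correct but not actually needed; the paper simply records $0<\dim Z<n$ and goes straight to bigness on $F$.
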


\begin{proof}
	As in the proof of Theorem \ref{thm:mainthm_numdim=1partB}, we may assume that $ X $ admits a locally constant MRC fibration $ \pi \colon X \to Z $ with respect to $(X,\Delta)$, where $ Z $ is a projective canonical variety with $ K_Z \sim 0 $. Since $ X $ is uniruled by Lemma \ref{lem:nefanticanimpliesuniruled}, but not rationally connected, we have $ 0 < \dim Z < n $. If $ F $ is the fibre of $ \pi $, then by Theorem \ref{thm:MW}(b) we have
$$ \nu \big( F, {-}(K_F + \Delta |_F) \big) = \nu \big( X, {-}(K_X + \Delta)\big) = n-1 . $$
Therefore, the divisor $ {-}(K_F + \Delta |_F) $ is big, so we conclude by Theorem \ref{thm:reduction}.
\end{proof}

\section{Projective bundles}

In this section we prove Theorem \ref{thm:abel} on nonvanishing of nef line bundles on analytic projective bundles $X$, which is of independent interest. The results of this section deal with \emph{any} nef line bundle on $X$ and not exclusively with nef line bundles of the form ${-}(K_X+\Delta)$, and we do not assume anything about the nefness of the anticanonical divisor of $X$. In the proofs we use crucially the theory of numerically flat bundles from \cite{DPS94}.

We start with the following result on the numerical flatness of direct image sheaves.

\begin{prop}  \label{lem:abel0} 
	Let $f\colon X \to Z$ be an analytically locally trivial $\mathbb P^r$-fibration between normal projective varieties. Let $L$ be a nef $f$-ample Cartier divisor on $X$ with $\nu(X,L) = r$. Then $ f_*\OO_X(L)$ is a numerically flat locally free sheaf on $Z$.
\end{prop}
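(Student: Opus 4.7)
The plan is to verify the two conditions of Remark \ref{rem:numflat_equiv}: that $\mathcal{E} := f_*\OO_X(L)$ is nef and that $\det\mathcal{E}^*$ is nef. First I would set up the basic numerical data. Since $f$ is analytically locally trivial with fibre $F\cong\mathbb P^r$ and $L$ is $f$-ample, the restriction $L|_F$ is ample and hence of the form $\OO_{\mathbb P^r}(d)$ for some positive integer $d$, which is constant on $Z$ by connectedness. Then $h^0(F_y, L|_{F_y}) = \binom{r+d}{d}$ is constant, so by Grauert's theorem (Lemma \ref{lem:grauert}) the sheaf $\mathcal{E}$ is locally free of this rank.

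Both nefness conditions can be verified after restriction to an arbitrary smooth irreducible curve $C \subseteq Z$ (after normalization). Over such a smooth curve, the analytically locally trivial $\mathbb P^r$-fibration $X_C := f^{-1}(C) \to C$ is actually a Zariski-locally trivial projective bundle by Tsen's theorem, so $X_C \cong \mathbb P_C(\mathcal F)$ for some rank $(r+1)$ vector bundle $\mathcal F$ on $C$. Writing $\xi := c_1(\OO_{\mathbb P(\mathcal F)}(1))$, since $L|_{X_C}$ has degree $d$ on each fibre we can write $L|_{X_C} \sim d\xi + \pi^*\mathcal M$ for some line bundle $\mathcal M$ on $C$, and pushing down yields $\mathcal{E}|_C = \pi_*\OO_{X_C}(L|_{X_C}) = \operatorname{Sym}^d\mathcal F \otimes \mathcal M$. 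For nefness of $\mathcal{E}|_C$, I would argue exactly as in the proof of Lemma \ref{lem:numflatsym}: since $L|_{X_C}$ is nef, the $\Q$-twisted bundle $\mathcal F\langle \tfrac{1}{d}\mathcal M\rangle$ is nef, and then \cite[Theorem 6.2.12(iii)]{Laz04} gives that $\operatorname{Sym}^d\mathcal F \otimes \mathcal M$ is nef.

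For $\det\mathcal{E}^*$ nef, it suffices to show $\deg\mathcal{E}|_C = 0$. Using the standard Chern class formulae,
$$\deg\mathcal{E}|_C = \binom{r+d}{d-1}\deg\mathcal F + \binom{r+d}{d}\deg\mathcal M = \tfrac{1}{r+1}\binom{r+d}{d}\bigl(d\deg\mathcal F + (r+1)\deg\mathcal M\bigr).$$
On the other hand, the Grothendieck relation on a curve reduces to $\xi^{r+1} = \pi^*c_1(\mathcal F)\cdot \xi^r$ (higher Chern classes of $\mathcal F$ vanish), and together with $(\pi^*\mathcal M)^2 = 0$ one computes
$$L^{r+1}\cdot [X_C] = (d\xi + \pi^*\mathcal M)^{r+1}\cdot [\mathbb P(\mathcal F)] = d^{r+1}\deg\mathcal F + (r+1)d^r\deg\mathcal M.$$
Since $\nu(X,L) = r$, the class $L^{r+1}$ is numerically trivial as a codimension-$(r+1)$ class, and $f_*L^{r+1}$ is a divisor class on $Z$ which is then also numerically trivial; by the projection formula $L^{r+1}\cdot f^*[C] = f_*L^{r+1}\cdot [C] = 0$. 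Dividing by $d^r$ yields $d\deg\mathcal F + (r+1)\deg\mathcal M = 0$, which plugged into the previous display gives $\deg\mathcal{E}|_C = 0$. Since this holds for every $C$, the class $c_1(\mathcal{E})$ is numerically trivial, hence $\det\mathcal{E}^*$ is nef, and Remark \ref{rem:numflat_equiv} completes the proof.

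The main obstacle I anticipate is a conceptual rather than computational one: one must check that the hypothesis $\nu(X,L) = r$ is strong enough to force $L^{r+1}\cdot f^*[C] = 0$ for every curve $C$ (i.e., that $f_*L^{r+1}$ is numerically trivial as a divisor on $Z$), and one must handle the passage from analytic local triviality of $f$ to a genuine projective bundle structure on $X_C$ via Tsen's theorem; once these are in place, the intersection-theoretic computations are routine.
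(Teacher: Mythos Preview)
Your proof is correct and follows essentially the same approach as the paper: both reduce to curves in $Z$ (normalised to a smooth curve $C'$), identify the pullback as $\mathbb P_{C'}(\mathcal E)$ for some bundle $\mathcal E$, write $L' \sim k\xi + \pi^*D$, use $(L')^{r+1}=0$ to relate $\deg D$ and $c_1(\mathcal E)$, and conclude via Lemma~\ref{lem:numflatsym} (which you unpack inline rather than cite). The only cosmetic differences are that the paper invokes $H^2(C',\OO^*_{C'})=0$ and \cite{Ele82} rather than Tsen's theorem for the projective bundle structure, and that the paper also states the key vanishing $(L')^{r+1}=0$ without further comment---so your flagged ``main obstacle'' is glossed over in both arguments.
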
 

\begin{proof}
Note that the map $f$ is flat and set $ \mathcal F := f_*\OO_X(L)$. Then $\mathcal F$ is locally free by Lemma \ref{lem:End}. Let $C \subseteq Z$ be an irreducible curve and let $ \nu\colon C' \to C $ be the normalisation. It suffices to show that 
\begin{equation}\label{eq:claim}
\mathcal F':=\nu^*(\mathcal F|_C)\text{ is numerically flat on }C'. 
\end{equation}

To this end, set $Y := f^{-1}(C)$ and $Y':=Y\times_C C'$, and consider the Cartesian diagram
$$\begin{gathered} 
\xymatrix{ 
Y' \ar[r]^\varphi \ar[d]_{\pi} & Y \ar[d]^{f|_Y} \\
C' \ar[r]_\nu & C.
}\end{gathered}$$
Then the morphism $\pi\colon Y' \to C'$ is an analytically locally trivial $\mathbb P^r$-fibration. Since $H^2(C',\OO^*_{C'})=0$, by \cite[p.\ 223]{Ele82} and by GAGA theorems there exists a locally free sheaf $\mathcal E$ of rank $r+1$ on $C'$ such that 
$$Y' = \mathbb P(\mathcal E).$$
Denote by $\xi$ a Cartier divisor on $\mathbb P(\mathcal E)$ corresponding to $\OO_{\mathbb P(\mathcal E)}(1)$, let $F$ be the numerical class of any fibre of $\pi$, and set $L':= \varphi^*(L|_Y)$. By \cite[Exercise III.12.5]{Har77} there exist an integer $k$ and a Cartier divisor $D$ on $C'$ such that 
\begin{equation}\label{eq:X_C}
	L' \sim k\xi+\pi^* D,
\end{equation}
and note that $k>0$ since $L$ is $f$-ample. We have $\xi^r\cdot F=\OO_{\mathbb P^r}(1)^r=1$, and hence $\xi^{r+1}=c_1(\mathcal E)$ by the defining relation of Chern classes of $\mathcal E$. Since $\nu(X,L)=r$, we have $(L')^{r+1}=0$, and hence \eqref{eq:X_C} gives
\begin{equation}\label{eq:rEll}
	\deg D={-}\frac{k}{r+1}c_1(\mathcal E).
\end{equation}
By cohomology and base change \cite[28.1.5]{Vak17} applied to the Cartesian diagram
$$\begin{gathered} 
\xymatrix{ 
Y' \ar[r]^\varphi \ar[d]_{\pi} & Y\ar[r] \ar[d]^{f|_Y} & X \ar[d]^{f} \\
C' \ar[r]_\nu & C\ar[r] & Z 
}
\end{gathered}$$
we have 
$$\pi_*\OO_{Y'}(L')\simeq\mathcal F',$$
which together with \eqref{eq:X_C} gives
$$ \mathcal F' \simeq \operatorname{Sym}^k \mathcal E \otimes \OO_{C'}(D). $$
Then \eqref{eq:claim} follows from this last relation by \eqref{eq:rEll} and by Lemma \ref{lem:numflatsym}(a).
\end{proof}

Now, we prove the main result of this section.

\begin{thm}\label{thm:abel}
Let $X$ be a normal projective variety, let $Z$ be a projective klt variety and let $f\colon X \to Z$ be an analytically locally trivial $\mathbb P^r$-fibration. Let $L$ be a nef $f$-ample Cartier divisor on $X$ with $\nu(X,L) = r$.
\begin{enumerate}[\normalfont (a)]
\item If $\pi_1(Z)$ is virtually abelian, then $L$ is num-effective.
\item If $\pi_1(Z)$ is finite, then $\kappa(X,L)\geq0$.
\item If $K_Z \equiv 0$, then $L$ is num-effective.
\end{enumerate}
\end{thm}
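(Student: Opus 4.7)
The starting point is Proposition~\ref{lem:abel0}, which tells us that $\mathcal F:=f_*\OO_X(L)$ is a numerically flat vector bundle on $Z$. I would exploit the following translation mechanism throughout: by the projection formula, every morphism $\mathcal M\to\mathcal F$ from a line bundle $\mathcal M$ on $Z$ corresponds to a section of $\OO_X(L)\otimes f^*\mathcal M^{-1}$, so a \emph{numerically trivial} line subbundle $\mathcal M\hookrightarrow\mathcal F$ immediately yields num-effectivity of $L$. The same observation applies after any finite cover $\tilde Z\to Z$, and descent back to $X$ is then handled by Lemma~\ref{lem:descenteff}(c), noting that $X$ inherits rational singularities from the klt base $Z$ via the analytic local product structure. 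The whole proof is therefore driven by the search for numerically trivial line subbundles of $\mathcal F$, possibly after passing to a finite \'etale cover.

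For part~(a), I would first treat $\pi_1(Z)$ abelian by invoking Theorem~\ref{thm:DPS94}: under that hypothesis, the filtration it produces has numerically trivial line bundle quotients, and its first step provides the desired subbundle. The virtually abelian case then reduces to the abelian one by passing to the finite \'etale cover $\tilde Z\to Z$ corresponding to an abelian finite-index subgroup of $\pi_1(Z)$ and descending the num-effectivity of $\pi^*L$ back to $L$.

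For part~(b), part~(a) already gives num-effectivity since finite groups are virtually abelian. To promote this to $\kappa(X,L)\geq 0$, I would pass to the universal cover $\pi\colon\tilde Z\to Z$, which is finite \'etale Galois with group $G=\pi_1(Z)$. Since the only irreducible unitary representation of the trivial group is the trivial one, Theorem~\ref{thm:DPS94} forces the filtration of $\pi^*\mathcal F$ to consist of copies of $\OO_{\tilde Z}$; in particular, the inclusion $\OO_{\tilde Z}\hookrightarrow\pi^*\mathcal F$ gives $H^0(\tilde X,\pi^*L)\neq0$. For a non-zero section $s$ in this space, the Galois-norm $\prod_{g\in G}g^*s\in H^0(\tilde X,|G|\pi^*L)^G$ is then a non-zero section that descends to a non-zero section of $|G|L$ on $X$.

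For part~(c), the new input is Proposition~\ref{m-gl_rep}: since $Z$ is a compact klt K\"ahler space with $K_Z\equiv 0$, every linear representation of $\pi_1(Z)$ has virtually abelian image. I would again take the DPS-filtration of $\mathcal F$ and consider its first piece $\mathcal E_1\subseteq\mathcal F$, which is hermitian flat and defined by an irreducible unitary representation $\rho\colon\pi_1(Z)\to U(r_1)$. Letting $H\subseteq\Image(\rho)$ be a finite-index abelian subgroup supplied by Proposition~\ref{m-gl_rep} and passing to the finite \'etale cover $\tilde Z\to Z$ corresponding to $\rho^{-1}(H)$, the representation $\rho|_{\pi_1(\tilde Z)}$ has abelian unitary image, hence is simultaneously diagonalisable. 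This splits $\pi^*\mathcal E_1$ as a direct sum of hermitian flat line bundles, each numerically trivial, yielding a numerically trivial line subbundle of $\pi^*\mathcal F$; one then concludes as in~(a). The main obstacle I anticipate throughout the three parts is that Theorem~\ref{thm:DPS94} is stated for projective manifolds while our base $Z$ is merely klt; this gap will have to be bridged either by passing to a resolution and exploiting comparisons of fundamental groups for klt singularities, or by appealing to a suitable singular extension of the Demailly--Peternell--Schneider filtration.
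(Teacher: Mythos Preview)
Your approach to parts (a) and (b) matches the paper's. The smoothness requirement of Theorem~\ref{thm:DPS94} is handled exactly as you anticipate: the paper passes to a desingularisation $\widehat Z\to Z$ after the finite \'etale cover, invokes Takayama's isomorphism $\pi_1(\widehat Z)\simeq\pi_1(Z)$ for klt $Z$, and descends via Lemma~\ref{lem:descenteff}. For (b) the paper argues that on a smooth simply connected base numerical and $\Q$-linear equivalence coincide, so the subbundle $\mathcal G$ from (a) is already torsion; your Galois-norm descent is an equally valid alternative.

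For part (c) your route is genuinely different and simpler. The paper proceeds via the singular Beauville--Bogomolov decomposition: after a quasi-\'etale cover one has $Z\simeq A\times Y$ with $A$ abelian and $\widetilde q(Y)=0$, and by \cite[Theorem~13.6]{GGK19} every representation of $\pi_1(Y)$ has \emph{finite} image; restricting the first DPS piece $\mathcal G$ to the $Y$-factor and passing to a further finite cover trivialises it there, a rigidity argument using $H^1(\widehat Y',\End\mathcal E)=0$ shows $\mathcal G$ is pulled back from $A$, and one finishes as in (a). You bypass all of this by applying Proposition~\ref{m-gl_rep} directly to the irreducible unitary representation $\rho$ defining $\mathcal E_1$: its image is virtually abelian, hence on a finite \'etale cover it becomes abelian and thus simultaneously unitarily diagonalisable, so the pullback of $\mathcal E_1$ splits into numerically trivial line bundles. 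This is correct and avoids the structure theorem, the GGK input, and the rigidity step. The paper's own remark before the proof---that (c) would follow from (a) if $\pi_1(Z)$ were known to be virtually abelian---shows the authors were focused on the full fundamental group rather than on the image of the single representation at hand, which is precisely what makes your shortcut work.
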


As mentioned in Section \ref{sec:reduction}, it is expected that the fundamental group of a projective klt variety $Z$ with $K_Z \equiv 0$ is virtually abelian, in which case (c) would follow immediately from (a). However, this is currently not known.

\begin{proof}[Proof of Theorem \ref{thm:abel}]
We proceed in several steps.

\medskip

\emph{Step 1.}
In this step we show (a) and (b). Since $\pi_1(Z)$ is virtually abelian, there exists a finite \'etale cover $\zeta\colon \widetilde Z\to Z$ such that $\pi_1(\widetilde Z)$ is abelian. Then $\widetilde Z$ is klt by \cite[Proposition 5.20]{KM98} and if $\sigma\colon \widehat Z \to\widetilde Z$ is a desingularisation of $\widetilde Z$, then $\sigma_*\colon \pi_1(\widehat Z) \to \pi_1(\widetilde Z) $ is an isomorphism by \cite[Theorem 1.1]{Tak03}. Consider the base change diagram
$$\begin{gathered} 
\xymatrix{ 
\widehat X \ar[r]^\xi \ar[d]_{\widehat f} & X \ar[d]^{f} \\
\widehat Z \ar[r]_{\zeta\circ\sigma} & Z.
}
\end{gathered}$$
Then $\widehat f\colon \widehat X\to\widehat Z$ is also an analytically locally trivial $\mathbb P^r$-fibration and the divisor $\widehat L:=\xi^*L$ is a nef $\widehat f$-ample Cartier divisor on $\widehat X$ with $\nu(\widehat X,\widehat L) = r$. Since $X$ is klt as it is locally a product of a smooth variety and a klt variety, we have that $L$ is num-effective if and only if $\widehat L$ is  num-effective by Lemma \ref{lem:descenteff}. By replacing $X$ with $\widehat X$, $Z$ with $\widehat Z$, $f$ with $\widehat f$, and $L$ with $\widehat L$, we may assume that $X$ and $Z$ are smooth, and $\pi_1(Z)$ is abelian; in case (b) we may assume that $Z$ is simply connected.

Now, by Proposition \ref{lem:abel0}, the sheaf $f_*\OO_X(L)$ is a numerically flat locally free sheaf on $Z$. Since the group $\pi_1(Z)$ is abelian, by Theorem \ref{thm:DPS94} there exists a numerically trivial line bundle $\mathcal G \subseteq f_*\OO_X(L)$. Therefore, 
$$H^0\big(X,\OO_X(L) \otimes f^*\mathcal G^{{-}1}\big)\simeq H^0(Z,f_*\OO_X(L)\otimes\mathcal  G^{{-}1})\neq0,$$
which shows (a). Part (b) then follows immediately, since numerical equi\-valence and $\Q$-linear equivalence coincide on $Z$.

\medskip 

For the remainder of the proof we show (c).
	
\medskip

\emph{Step 2.}
We first claim that, after making a quasi-\'etale base change, we may assume that $ Z $ is a canonical variety and $Z \simeq A \times Y$, where
\begin{enumerate}
	\item[(i)$_1$] $A$ is an abelian variety,

	\item[(ii)$_1$] $Y$ is a canonical variety such that $K_Y \sim 0$, $\widetilde{q}(Y)=0$ and any finite dimensional representation of $\pi_1(Y)$ has a finite image. 
\end{enumerate}

Indeed, by \cite[Theorem 1.5]{HP19} there exists a finite quasi-\'etale cover $\zeta\colon \widetilde Z\to Z$ such that $\widetilde Z \simeq A \times Y$, where $A$ is an abelian variety and $Y$ is a canonical variety with $K_Y \sim 0$ and $\widetilde q(Y)=0$. Then by \cite[Theorem 13.6]{GGK19} any finite dimensional representation of $\pi_1(Y)$ has a finite image. The claim follows by replacing the morphism $f$ with a base change by $\zeta$ as in Step 1.

\medskip

\emph{Step 3.}
Let $\theta\colon \widehat Y \to Y$ be a desingularisation of $Y$, set $\widehat Z:=A\times\widehat Y$ and let $\sigma:=\operatorname{id}_A\times\theta$ be the corresponding desingularisation of $Z$. Consider the base change diagram
$$\begin{gathered} 
\xymatrix{ 
\widehat X \ar[r]^\xi \ar[d]_{\widehat f} & X \ar[d]^{f} \\
\widehat Z \ar[r]_{\sigma} & Z.
}
\end{gathered}$$
Then $\widehat f\colon \widehat X\to\widehat Z$ is also an analytically locally trivial $\mathbb P^r$-fibration and the divisor $\widehat L:=\xi^*L$ is a nef $\widehat f$-ample Cartier divisor on $\widehat X$ with $\nu(\widehat X,\widehat L) = r $. We also have that $L$ is num-effective if and only if $\widehat L$ is  num-effective by Lemma \ref{lem:descenteff}. 

Now, by Proposition \ref{lem:abel0} the sheaf 
$$\mathcal F :=\widehat f_*\OO_{\widehat X}(\widehat L)$$
is a numerically flat locally free sheaf on $\widehat Z$. Hence, by Theorem \ref{thm:DPS94} there exists a hermitian flat subbundle of rank $k$, 
$$\mathcal G \subseteq \mathcal F.$$

\medskip

\emph{Step 4.}
Fix a fibre $F\simeq\widehat Y$ of the first projection $\widehat Z\to A$, and denote 
$$\mathcal E:=\mathcal G|_F,$$
viewed as a vector bundle on $\widehat Y$. Since $\mathcal E$ is hermitian flat, it is given by a unitary representation $\rho\colon\pi_1(\widehat Y)\to U(k)$. Since $\theta_*\colon \pi_1(\widehat Y) \to \pi_1(Y) $ is an isomorphism by \cite[Theorem 1.1]{Tak03}, the representation $\rho$ has a finite image by (ii)$_1$. Then the kernel of $\rho$ induces a finite \'etale cover $\eta\colon Y'\to Y$. Consider the base change diagram
$$\begin{gathered} 
\xymatrix{ 
\widehat Y' \ar[r]^{\nu} \ar[d] & \widehat Y \ar[d]^\theta \\
Y' \ar[r]_{\eta} & Y.
}
\end{gathered}$$

In this step we claim:
\begin{enumerate}
	\item[(i)$_2$] the sheaf $\mathcal E$ does not move infinitesimally,

	\item[(ii)$_2$] $\nu^*\mathcal E$ is the trivial vector bundle of rank $k$ on $\widehat Y'$.
\end{enumerate}

Indeed, (ii)$_2$ follows immediately. In particular, the sheaf $\mathcal End(\nu^*\mathcal E)\simeq \nu^*\mathcal End(\mathcal E)$ is also trivial. Since $ 0 \leq q(Y') \leq \widetilde{q} (Y) = 0 $ and since $Y'$ has canonical singularities by \cite[Proposition 5.20]{KM98}, it follows readily that $H^1(\widehat Y',\OO_{\widehat Y'})=0$, and hence
$$H^1\big(\widehat Y',\nu^*\mathcal End(\mathcal E)\big)\simeq H^1(\widehat Y',\OO_{\widehat Y'})^{\oplus k}=0.$$
By the finiteness of $\nu$ and by the projection formula we obtain
$$0=H^1\big(\widehat Y',\nu^*\mathcal End(\mathcal E)\big)\simeq H^1\big(\widehat Y,\nu_*\nu^*\mathcal End(\mathcal E)\big)\simeq H^1\big(\widehat Y,\mathcal End(\mathcal E)\otimes\nu_*\OO_{\widehat{Y}'}\big).$$
As $\nu$ is finite \'etale, the sheaf $\OO_{\widehat Y}$ is a direct summand of $\nu_*\OO_{\widehat Y'}$, hence
$$H^1\big(\widehat Y,\mathcal End(\mathcal E)\big)=0.$$
This shows (i)$_2$.

\medskip

\emph{Step 5.}
Set $\widehat Z':=A\times\widehat Y'$ and $\mu:=\operatorname{id}_A\times\nu\colon \widehat Z'\to\widehat Z$, and let $p_A\colon\widehat Z'\to A$ be the first projection. Setting
$$\mathcal G':=\mu^*\mathcal G,$$
by Step 3 we infer that the restriction of the sheaf $\mathcal G'$ to each fibre of $p_A$ is the trivial vector bundle of rank $k$. Thus, setting $\mathcal H:=(p_A)_*\mathcal G'$, Lemma \ref{lem:grauert} yields
$$\mathcal G'\simeq p_A^*\mathcal H.$$
Then the sheaf $\mathcal H$ is numerically flat, so by Theorem \ref{thm:DPS94} there exists a numerically trivial line bundle $\mathcal M\subseteq \mathcal H$. Therefore, $\mathcal M':=p_A^*\mathcal M$ is a numerically trivial line bundle such that $ \mathcal M' \subseteq \mathcal G'$.

Consider the Cartesian diagram
$$\begin{gathered} 
\xymatrix{ 
\widehat X' \ar[r]^{\psi} \ar[d]_{\widehat f'} & \widehat X \ar[d]^{\widehat f} \\
\widehat Z' \ar[r]_{\mu} & \widehat Z.
}
\end{gathered}$$
Set $\widehat L':=\psi^*\widehat L$ and $\mathcal F':=\mu^*\mathcal F$. By Lemma \ref{lem:descenteff} it suffices to show that $\widehat L'$ is num-effective. The cohomology and base change \cite[28.1.5]{Vak17} yields $\widehat f'_*\OO_{\widehat X'}(\widehat L')=\mathcal F'$, and since $\mathcal M'\subseteq \mathcal F'$,  we obtain
$$H^0\big(\widehat X',\OO_{\widehat X'}(\widehat L') \otimes (\widehat f')^*(\mathcal M')^{{-}1}\big)\simeq H^0\big(\widehat Z',\mathcal F' \otimes(\mathcal  M')^{{-}1}\big)\neq0,$$
which finishes the proof. 
\end{proof}

\section{On generalised pairs}\label{sec:gpairs}

In this section we use the theory of generalised pairs to -- as we will see in Section \ref{sec:TheoremA} -- essentially prove Theorem \ref{thm:mainthm_dim=3}(a) when $X$ has worse than canonical singularities or when $\Delta\neq0$. We will also see that actually the same methods yield the proof of Theorem \ref{thm:mainthm_dim=3}(b).

The rough strategy is the following: by passing to a birational model we may always assume that $\Delta\neq0$. If $L$ is a nef divisor as in Theorem \ref{thm:mainthm_dim=3}(b), then we may write $L=K_X+\Delta+M$ for a nef divisor $M$ on $X$. The method -- also present in a similar form in \cite{HanLiu20,LP20a,LP20b} -- is to modify $L$ by making $\Delta$ and $M$ smaller until the new divisor hits the boundary of the pseudoeffective cone, in which case a slight modification of one of the main results of \cite{LT22b} yields that this new divisor is num-effective; a similar strategy in the context of usual pairs was employed in \cite{LM21}. Some additional technical considerations then allow us to deduce that the divisor $L$ is also num-effective.

We start with several results which posit that, roughly, on a generalised pair $(X,B+M)$ the divisor $K_X+B+M$ is num-effective if it lies on the boundary of the pseudoeffective cone, in a very precise sense.

\begin{thm}\label{thm:GNV_psef_threshold_dlt}
	Assume the existence of minimal models for smooth varieties of dimension $ n-1 $ and assume that the Generalised Nonvanishing Conjecture holds in dimensions at most $ n-1 $.
	
	Let $\big(X,(B+N)+(M+P)\big)$ be a projective NQC $\Q$-factorial dlt g-pair of dimension $n$. Assume that the divisor $ K_X+B+N+M+P $ is pseudoeffective and that for each $\varepsilon>0$ the divisor $K_X+B+M+(1-\varepsilon)(N+P)$ is not pseudoeffective. Then the divisor $ K_X+B+N+M+P $ is num-effective.
\end{thm}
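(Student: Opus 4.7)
The plan is to run a carefully chosen MMP and use the pseudoeffective threshold hypothesis to force termination at a generalised Mori fibre space of strictly smaller dimension, where the inductive hypothesis applies.

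First, I would reduce to the case where $K_X + B + N + M + P$ is \emph{nef}. Since it is pseudoeffective by assumption and the g-pair is $\Q$-factorial dlt NQC, one runs a $(K_X+B+N+M+P)$-MMP with scaling of an ample divisor and, via the MMP machinery from \cite{HaconLiu21,LT22b} together with the inductive assumption on existence of minimal models in dimension $n-1$ (which is typically what is used to terminate flips in dimension $n$ in the relevant special flips), reaches a minimal model on which the pushforward is nef. Since each step is $(K_X+B+N+M+P)$-negative, num-effectivity of this pushforward descends to $X$ by Lemma \ref{lem:MMPnumeff}, and the pseudoeffective threshold hypothesis is preserved by pushing forward and pulling back (using Lemma \ref{lem:MMPnumeff} in both directions).

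Second, having assumed $K_X+B+N+M+P$ nef, I would run a $(K_X+B+M)$-MMP with scaling of $N+P$, which exists by \cite[Corollary 5.22]{HaconLiu21}. The nef thresholds $\lambda_i$ form a non-increasing sequence in $[0,1]$. The threshold hypothesis forces $\lambda_i = 1$ for every $i$: indeed, if some pushforward of $K_X+B+M+(1-\varepsilon)(N+P)$ on an intermediate model were pseudoeffective, then Lemma \ref{lem:MMPnumeff} would give pseudoeffectivity on $X$, contradicting the hypothesis. In particular, the MMP cannot terminate with $K_X+B+M$ becoming nef on a birational model, since that would make $K_X+B+M+0\cdot(N+P)$ pseudoeffective there, yielding the same contradiction.

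Third, by the termination results of \cite{LT22b}, the MMP must therefore terminate with a generalised Mori fibre space $g\colon X''\to Z$ with $\dim Z<n$. Since $\lambda=1$ throughout, the pushforward of $K_X+B+N+M+P$ on $X''$ is trivial on the fibres of $g$, so by the canonical bundle formula for NQC g-pairs this divisor descends to an NQC $\Q$-factorial dlt g-pair structure on $Z$ whose generalised log canonical class is pseudoeffective. By the inductive hypothesis (Generalised Nonvanishing in dimension at most $n-1$), this descended divisor is num-effective on $Z$; pulling back to $X''$ and then back through the $D$-nonpositive birational contractions of the MMP via Lemma \ref{lem:MMPnumeff} yields num-effectivity of $K_X+B+N+M+P$ on $X$.

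The main obstacle will be Step 2: carefully verifying that the threshold hypothesis is preserved at each step of the MMP (so that the nef threshold sequence stays pinned at $1$), invoking the correct termination theorem from \cite{LT22b} in this specific setup, and excluding the "terminate with $K_X+B+M$ nef" alternative. Once this bookkeeping is set up, the canonical bundle formula for g-pairs at the final fibration and the inductive hypothesis do the rest.
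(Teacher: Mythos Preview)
Your overall strategy --- run an MMP to reach a Mori fibre space on which $K_X+B+N+M+P$ is trivial along the fibres, descend via the canonical bundle formula, and invoke the Generalised Nonvanishing Conjecture in lower dimension --- is exactly the route the paper takes, by deferring to \cite[Theorem 3.1]{LT22b} and replacing the num-effectivity descent lemma used there by Lemma \ref{lem:MMPnumeff}. So the architecture is right.

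The genuine gap is your Step 1. You propose to run a $(K_X+B+N+M+P)$-MMP with scaling of an ample divisor until this divisor becomes nef, appealing loosely to ``existence of minimal models in dimension $n-1$'' for termination. But that hypothesis does \emph{not} give termination of an arbitrary MMP in dimension $n$ for a pseudoeffective divisor: the relevant input from \cite{LT22b} (cf.\ Proposition \ref{prop:MMP}) gives termination when the divisor is not pseudoeffective or admits a weak Zariski decomposition, neither of which you have for $K_X+B+N+M+P$ a priori. Moreover, your claim that the threshold hypothesis survives this first MMP is unjustified: steps of a $(K_X+B+N+M+P)$-MMP need not be $(K_X+B+M+(1-\varepsilon)(N+P))$-nonpositive, so pseudoeffectivity of the latter on an intermediate model does not pull back in the way you suggest.

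The paper (via \cite[Theorem 3.1]{LT22b}) sidesteps this entirely: it never first reduces to nefness of $K_X+B+N+M+P$. Instead it runs MMPs only for \emph{non-pseudoeffective} divisors such as $K_X+B+M+(1-\varepsilon)(N+P)$, whose termination \emph{is} guaranteed by \cite{LT22b} under the dimension-$(n-1)$ hypothesis, and arranges matters so that the resulting fibration $\xi\colon X\to Y$ has $\nu\big(F,(K_X+B+N+M+P)|_F\big)=0$ on a very general fibre. A second MMP over the base (again for a non-pseudoeffective divisor) then upgrades this to $K_X+B+N+M+P\sim_{\R,T}0$ for a fibration $g\colon X\to T$, at which point the canonical bundle formula and induction finish the proof. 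Your Steps 2--3 are morally this second half, but without a valid Step 1 you cannot get there.

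One further small point: in your Step 3 you assert that the g-pair induced on the base $Z$ is $\Q$-factorial dlt. The canonical bundle formula \cite[Theorem 1.2]{HaconLiu21} only yields an NQC \emph{log canonical} g-pair on the base (with $\Q$-factoriality coming from \cite[Corollary 5.21]{HaconLiu21}); this is still enough for the inductive hypothesis, but you should not claim dlt.
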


\begin{proof}
	We follow exactly the same strategy as in the proof of \cite[Theorem 3.1]{LT22b}. We indicate here only the necessary modifications and we refer to \cite{LT22b} for details.

	\medskip

	As in Step 1 of the proof of \cite[Theorem 3.1]{LT22b}, but invoking Lemma \ref{lem:MMPnumeff} instead of \cite[Lemma 2.14]{LT22a}, we may assume the following:
	
	\medskip
	
	\emph{Assumption 1}.
	There exists a fibration $\xi \colon X \to Y$ to a normal quasi-projective variety $Y$ with $\dim Y < \dim X$ and such that:
	\begin{enumerate}
		\item[(a$_1$)] $\nu\big(F,(K_X+B+N+M+P)|_F\big)=0$ and $h^1(F,\OO_F)=0$ for a very general fibre $F$ of $\xi$, 
		
		\item[(b$_1$)] $K_X+B+M+(1-\varepsilon) (N+P)$ is not $\xi$-pseudoeffective for any $\varepsilon>0$. 
	\end{enumerate}
	
	\medskip
	
	If $\dim Y=0$, then $ K_X+B+N+M+P $ is num-effective by (a$_1$) and by \cite[Proposition V.2.7(8)]{Nak04}, and we are done.

	\medskip
	
	If $\dim Y>0$, then as in Step 3 of the proof of \cite[Theorem 3.1]{LT22b}, but invoking again Lemma \ref{lem:MMPnumeff} instead of \cite[Lemma 2.14]{LT22a}, we may assume the following:
	
	\medskip
	
	\emph{Assumption 2}.
	There exists a fibration $g \colon X \to T$ to a normal quasi-projective variety $T$ such that:
	\begin{enumerate}
		\item[(a$_2$)] $0<\dim T<\dim X$,
		
		\item[(b$_2$)] $K_X+B+N+M+P\sim_{\R,T} 0$.
	\end{enumerate} 
	However, instead of the g-pair $\big(X,(B+N)+(M+P)\big)$ being $\Q$-factorial dlt, we may now only assume that it is an NQC log canonical g-pair such that $(X,0)$ is $\Q$-factorial klt.
	
	\medskip
	
	By (b$_2$) and by \cite[Theorem 1.2]{HanLiu21} there exists an NQC log canonical g-pair $ (T, B_T + M_T) $ such that
	\begin{equation}\label{eq:GNV_psef_threshold_dlt}
		K_X + B + N + M + P \sim_\R g^* (K_T + B_T + M_T).
	\end{equation}
	Note that the variety $ T $ is $ \Q $-factorial by \cite[Corollary 5.21]{HaconLiu21} and that the divisor $ K_T + B_T + M_T $ is pseudoeffective by \eqref{eq:GNV_psef_threshold_dlt}. Therefore, by (a$_2$) and since we assume the Generalised Nonvanishing Conjecture in dimensions at most $ n-1 $, we infer that the divisor $ K_T + B_T + M_T $ is num-effective. Hence, by \eqref{eq:GNV_psef_threshold_dlt} the divisor $K_X+B+N+M+P$ is num-effective.
\end{proof}

An immediate corollary of Theorem \ref{thm:GNV_psef_threshold_dlt} is the following result, which improves considerably on \cite[Theorem 4.6]{HanLiu20}, especially by removing the assumption on the termination of flips.

\begin{thm}\label{thm:GNV_psef_threshold_lc}
	Assume the existence of minimal models for smooth varieties of dimension $ n-1 $ and assume that the Generalised Nonvanishing Conjecture holds in dimensions at most $ n-1 $.
	
	Let $\big(X,(B+N)+(M+P)\big)$ be a projective NQC $\Q$-factorial log canonical g-pair of dimension $n$. Assume that the divisor $ K_X+B+N+M+P $ is pseudoeffective and that the divisor $K_X+B+M$ is not pseudoeffective. Set
	$$ \tau := \inf \big\{ t \in \R_{\geq 0} \mid K_X+B+M+t(N+P) \text{ is pseudoeffective} \big\} \in (0,1] . $$
	Then the divisor $ K_X+B+M+\tau(N+P) $ is num-effective.
\end{thm}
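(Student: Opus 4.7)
The strategy is to reduce the statement to Theorem \ref{thm:GNV_psef_threshold_dlt} via rescaling and passage to a dlt blowup. Set $N' := \tau N$ and $P' := \tau P$, and consider the g-pair $(X, (B+N') + (M+P'))$. Because $B + N' \leq B + N$ and $M + \tau P$ is still NQC nef, this g-pair is $\Q$-factorial NQC log canonical. The closedness of the pseudoeffective cone together with the definition of $\tau$ yields that $K_X + B + N' + M + P' = K_X + B + M + \tau(N+P)$ is pseudoeffective, while $K_X + B + M + (1-\varepsilon)(N' + P')$ is not pseudoeffective for any $\varepsilon \in (0,1)$.

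Next, take a dlt blowup $f \colon Y \to X$ of the NQC log canonical g-pair $(X, (B + N') + (M + P'))$, producing a $\Q$-factorial NQC dlt g-pair $(Y, \Gamma_Y + \mathcal{M}_Y)$ with $K_Y + \Gamma_Y + \mathcal{M}_Y \sim_\R f^*(K_X + B + N' + M + P')$. Decompose $\Gamma_Y = \tilde B + \tilde N$, where $\tilde B := f^{-1}_* B + E$ with $E$ the reduced sum of all $f$-exceptional prime divisors and $\tilde N := \tau f^{-1}_* N$, and similarly $\mathcal{M}_Y = \tilde M + \tilde P$ with $\tilde M$ the pulled-back nef data of $M$ and $\tilde P := \tau \cdot (\text{pulled-back nef data of } P)$. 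Then $(Y, (\tilde B + \tilde N) + (\tilde M + \tilde P))$ is a $\Q$-factorial NQC dlt g-pair, to which we intend to apply Theorem \ref{thm:GNV_psef_threshold_dlt}.

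The pseudoeffectivity of $K_Y + \tilde B + \tilde N + \tilde M + \tilde P$ follows immediately from the pullback relation above. The principal obstacle is verifying the second hypothesis, namely that $K_Y + \tilde B + \tilde M + (1-\varepsilon)(\tilde N + \tilde P)$ is not pseudoeffective for any $\varepsilon > 0$. A naive pullback comparison does not suffice: a direct computation shows this divisor differs from $f^*(K_X + B + M + (1-\varepsilon)(N'+P'))$ by the effective exceptional contribution $\varepsilon\tau(f^* N - f^{-1}_* N) \geq 0$, and adding an effective divisor to a non-pseudoeffective one can restore pseudoeffectivity. Instead, argue by contradiction via pushforward: if $K_Y + \tilde B + \tilde M + (1-\varepsilon)(\tilde N + \tilde P)$ were pseudoeffective, so would be its image under $f_*$; since $f_*$ annihilates $f$-exceptional divisors and satisfies $f_* K_Y \equiv K_X$ as $f$ is birational, one computes $f_*(K_Y + \tilde B + \tilde M + (1-\varepsilon)(\tilde N + \tilde P)) \equiv K_X + B + M + (1-\varepsilon)(N' + P')$, contradicting the defining property of $\tau$.

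With both hypotheses in place, Theorem \ref{thm:GNV_psef_threshold_dlt} implies that $K_Y + \tilde B + \tilde N + \tilde M + \tilde P$ is num-effective on $Y$. Since $X$ is $\Q$-factorial and $f$ is birational, Lemma \ref{lem:descenteff}(a) allows us to descend this conclusion to $X$: the divisor $K_X + B + N' + M + P' = K_X + B + M + \tau(N+P)$ is num-effective, completing the proof.
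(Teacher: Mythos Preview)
Your proof is correct and follows essentially the same approach as the paper: rescale to $(B+\tau N)+(M+\tau P)$, pass to a dlt blowup, apply Theorem~\ref{thm:GNV_psef_threshold_dlt}, and descend via Lemma~\ref{lem:descenteff}(a). Your pushforward argument for the non-pseudoeffectivity hypothesis is exactly the content of the paper's remark that ``we readily see'' the threshold is preserved under the dlt blowup, spelled out in more detail.
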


\begin{proof}
	The proof is almost identical to the proof of \cite[Lemma 3.2]{LT22b}, and we only sketch it here: Take a dlt blowup 
	\[ h \colon \big(X', (B' + \tau h_*^{-1} N) +(M'+\tau P')\big) \to \big(X,(B+\tau N)+(M+\tau P) \big) \]
	of the g-pair $ \big(X,(B+\tau N)+(M+\tau P) \big) $, where $ B' := h_*^{-1} B + E $ and $ E $ is the sum of all $ h $-exceptional prime divisors \cite[Proposition 3.9]{HanLi}. Then we readily see that
	\[ \tau = \inf \{ t \in \R_{\geq 0} \mid K_{X'} + B' + M' + t (h_*^{-1} N+P') \text{ is pseudoeffective} \} . \]
	By Theorem \ref{thm:GNV_psef_threshold_dlt} the divisor $ K_{X'} + B' + \tau h_*^{-1} N + M'+\tau P' $ is num-effective, hence the divisor $ K_X + B + \tau N + M + \tau P $ is num-effective by Lemma \ref{lem:descenteff}(a).
\end{proof}

\begin{rem}
	Theorem \ref{thm:GNV_psef_threshold_lc} also holds if we assume that $ X $ has rational singularities instead of being $ \Q $-factorial by virtue of Lemma \ref{lem:descenteff}(b). Consequently, the same applies to all the following results in this section; in other words, in Theorem \ref{thm:alternative3}, Corollary \ref{cor:non-canonical} and, by extension, Theorem \ref{thm:mainthm_dim=3}, the assumption that $ X $ is $ \Q $-factorial may be replaced by the assumption that $ X $ has rational singularities. 
\end{rem}
	
	Now we can apply these general results in the context of this paper.
	
\begin{thm}\label{thm:alternative3}
	Assume the existence of minimal models for smooth varieties of dimension $n-1$ and assume that the Generalised Nonvanishing Conjecture holds in dimensions at most $n-1$.
	
	Let $(X,\Delta)$ be a projective, $\Q$-factorial, log canonical pair of dimension $n$ such that ${-}(K_X+\Delta)$ is nef and let $L$ be a nef $\Q$-divisor on $X$.
		\begin{enumerate}[\normalfont (i)]
		\item If ${-}K_X$ is num-effective and $K_X\not\equiv0$, then $L$ is num-effective.
		
		\item If $\Delta\neq0$, then $L$ is num-effective.
	\end{enumerate}
\end{thm}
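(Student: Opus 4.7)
\emph{Plan.} Set $M:=L-(K_X+\Delta)$. Since both $L$ and $-(K_X+\Delta)$ are nef, $M$ is nef, hence $(X,\Delta+M)$ is an NQC $\Q$-factorial log canonical g-pair with $K_X+\Delta+M=L$ nef, in particular pseudoeffective. The strategy throughout is to apply Theorem \ref{thm:GNV_psef_threshold_lc} with a splitting of the boundary and nef parts chosen so that the difference between $L$ and the num-effective output is effective rather than merely nef.

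I first reduce (i) to (ii). If $\Delta\ne 0$ under the hypotheses of (i), then (ii) applies directly. Otherwise $\Delta=0$, so $-K_X$ itself is nef; I pick an effective $\R$-divisor $E_0\equiv -K_X$, nonzero since $K_X\not\equiv 0$, and sufficiently general so that $(X,\lambda E_0)$ is log canonical for some small $\lambda>0$. The identity
\[
-(K_X+\lambda E_0)\equiv(1-\lambda)(-K_X)
\]
shows the perturbed pair $(X,\lambda E_0)$ satisfies the hypotheses of (ii) with nonzero boundary, so (i) follows from (ii) applied to this new pair (the conclusion ``$L$ num-effective'' depends only on $L$, not on the boundary).

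For (ii), observe first that $K_X$ cannot be pseudoeffective: if it were, adding the nef divisor $-(K_X+\Delta)$ would give $-\Delta$ pseudoeffective, which is impossible for a nonzero effective $\Delta$. I would then apply Theorem \ref{thm:GNV_psef_threshold_lc} to the g-pair structure $\bigl(X,(0+\Delta)+(M+0)\bigr)$, i.e.\ with $B=0$, $N=\Delta$, nef part $M$, and $P=0$. Here $K_X+B+M=K_X+M$ coincides with $L-\Delta$, while $K_X+B+N+M+P=L$ is pseudoeffective. If $L-\Delta$ is not pseudoeffective, the theorem yields some $\tau\in(0,1]$ with $K_X+M+\tau\Delta$ num-effective, and then the identity
\[
L=(K_X+M+\tau\Delta)+(1-\tau)\Delta
\]
exhibits $L$ as a sum of a num-effective and an effective class, hence num-effective.

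The sub-case $L-\Delta$ pseudoeffective is the main obstacle. The choice $P=0$ above was made precisely to avoid a nef residue $(1-\tau)M$ that cannot be absorbed into effective classes, so in this sub-case the threshold mechanism is not directly available. Instead I aim to prove the stronger statement that $K_X+M$ is itself num-effective for the NQC g-pair $(X,0+M)$, from which $L=(K_X+M)+\Delta$ is immediately num-effective. For this I plan to run a $(K_X+M)$-MMP with scaling, whose termination is guaranteed under the paper's inductive hypotheses via the main results of \cite{LT22b}, and then reduce on the resulting minimal or Mori fibre model via an Iitaka-type fibration to the Generalised Nonvanishing Conjecture in dimension at most $n-1$; this last step is where the bulk of the technical work lies.
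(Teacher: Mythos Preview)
Your reduction of (i) to (ii) is not quite right: when $(X,0)$ is strictly log canonical, the log canonical threshold of an effective $E_0$ passing through an lc centre can be zero, so no $\lambda>0$ need make $(X,\lambda E_0)$ log canonical; num-effectivity alone does not let you move $E_0$ away from such centres. This is repairable (a dlt blowup either produces nonzero boundary or leaves $X$ klt, where openness of klt applies), but it is a gap as written.

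The real problem is the sub-case $K_X+M$ pseudoeffective. Since $K_X+M$ is pseudoeffective, any terminating $(K_X+M)$-MMP lands on a \emph{minimal model}, not a Mori fibre space, leaving $K_Y+M_Y$ nef in dimension $n$. There is no Iitaka-type fibration without abundance for g-pairs, so you cannot descend to dimension $\le n-1$; you are asking for the Generalised Nonvanishing Conjecture in dimension $n$, which is circular. The paper's route avoids this by introducing a second threshold $\mu:=\inf\{t\ge0\mid K_X+tM\text{ pseudoeffective}\}>0$ (positive since $K_X$ is not pseudoeffective) and applying Theorem~\ref{thm:GNV_psef_threshold_lc} again to get $K_X+\mu M$ num-effective. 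It then first handles the particular divisor $L=-(K_X+\Delta)$: here $M=-2(K_X+\Delta)$, and $K_X+\mu M\equiv G\ge0$ rearranges to $-(2\mu-1)(K_X+\Delta)\equiv G+\Delta$; when $\Delta\ne0$ this forces $2\mu>1$ and hence $-(K_X+\Delta)$ num-effective, while in case~(i) with $\Delta=0$ the hypothesis on $-K_X$ is invoked directly at this point. For arbitrary $L$ one then writes
\[
L=\tfrac{1}{\mu}(K_X+\mu M)+\tfrac{1-\mu}{\mu}\bigl(-(K_X+\Delta)\bigr)+\tfrac{1}{\mu}\Delta,
\]
and every summand is already known to be num-effective. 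The crux is that treating the special case first converts the dangerous nef residue $-(K_X+\Delta)$ into a num-effective class, which is exactly the obstacle you identified but did not overcome.
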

	
\begin{proof}
	We prove simultaneously (i) and (ii). Thus, in what follows, we assume that either that $\Delta\neq0$, or that $\Delta=0$, in which case ${-}K_X$ is num-effective and $K_X\not\equiv0$.

	We may write $L=K_X+\Delta+M$, where the divisor $M:=L-(K_X+\Delta)$ is nef. Then $(X,\Delta+M)$ is a log canonical g-pair, and consider
	$$ \delta := \inf \big\{ t \in \R_{\geq 0} \mid K_X+t\Delta+M \text{ is pseudoeffective} \big\} \in [0,1] . $$
	
	If $\delta>0$, then $K_X+\delta\Delta+M$ is num-effective by Theorem \ref{thm:GNV_psef_threshold_lc}, hence $L=(K_X+\delta\Delta+M)+(1-\delta)\Delta$ is num-effective.
	
	From now on we assume that $\delta=0$. Consider
	$$ \mu := \inf \big\{ t \in \R_{\geq 0} \mid K_X+tM \text{ is pseudoeffective} \big\} \in [0,1] . $$
	Note that $\mu>0$, since otherwise $K_X$ would be pseudoeffective, which would contradict the assumptions in (i) and (ii). Therefore,
	\begin{equation}\label{eq:67}
	K_X+\mu M\quad\text{is num-effective}	
	\end{equation}	 
	by Theorem \ref{thm:GNV_psef_threshold_lc}.
	
	Assume first that $L={-}(K_X+\Delta)$, or equivalently, $M={-}2(K_X+\Delta)$.
	By \eqref{eq:67} there exists an effective $\Q$-divisor $G$ such that $K_X-2\mu (K_X+\Delta)\equiv G$, hence
	$${-}(2\mu-1)(K_X+\Delta)\equiv G+\Delta\geq0.$$
	If $\Delta\neq0$, then $G+\Delta\neq0$, and in particular, $2\mu-1>0$, so $L={-}(K_X+\Delta)$ is num-effective. If $\Delta=0$, then $L={-}K_X$ is num-effective by assumption.
	
	Finally, if $L$ is any nef $\Q$-divisor on $X$, then by the previous paragraph the divisor
	$$\textstyle L=\frac1\mu (K_X+\mu M)-\frac{1-\mu}{\mu}(K_X+\Delta)+\frac1\mu\Delta$$
	is num-effective. This completes the proof.
\end{proof}

\begin{cor}\label{cor:non-canonical}
	Assume the existence of minimal models for smooth varieties of dimension $n-1$ and assume that the Generalised Nonvanishing Conjecture holds in dimensions at most $n-1$.
	
	Let $(X,\Delta)$ be a projective $\Q$-factorial pair of dimension $n$ which is log canonical but not canonical, and assume that ${-}(K_X+\Delta)$ is nef. If $L$ is a nef $\Q$-divisor on $X$, then $L$ is num-effective.
\end{cor}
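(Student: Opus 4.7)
The plan is to reduce Corollary \ref{cor:non-canonical} to a direct application of Theorem \ref{thm:alternative3}(ii) by constructing a suitable birational model with a non-zero boundary. More precisely, I aim to produce a projective birational morphism $f\colon Y\to X$ with $Y$ projective $\Q$-factorial, together with an effective non-zero $\Q$-divisor $\Gamma$ making $(Y,\Gamma)$ log canonical and such that $K_Y+\Gamma\sim_\Q f^*(K_X+\Delta)$. Once such data is in hand, the divisor ${-}(K_Y+\Gamma)$ is nef as the pullback of a nef divisor, and $f^*L$ is a nef $\Q$-divisor on $Y$; Theorem \ref{thm:alternative3}(ii) applied to $(Y,\Gamma)$ (with $\Gamma\neq0$) then yields that $f^*L$ is num-effective on $Y$, and Lemma \ref{lem:descenteff}(a), applied to the $\Q$-factorial target $X$, transfers num-effectivity back to $L$.

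The construction of $(Y,\Gamma)$ splits naturally into three cases. If $\Delta\neq0$, one takes $f=\operatorname{id}_X$ and $\Gamma=\Delta$, which is immediate. Assume henceforth $\Delta=0$; by hypothesis, $X$ is then log canonical but not canonical. If $X$ fails to be klt, then some divisor over $X$ has discrepancy $-1$, so the dlt blowup $f\colon(Y,\Gamma)\to(X,0)$ from Theorem \ref{thm:dltblowup}(b) extracts at least one $f$-exceptional divisor, which appears in $\Gamma$ with coefficient $1$; hence $\Gamma\neq0$. The last case, in which $X$ is klt but not canonical, is the one requiring a different tool: a dlt blowup is useless since no divisor over $X$ has discrepancy $-1$, so I would invoke the $\Q$-factorial terminalisation $f\colon(Y,\Gamma)\to(X,0)$ of Theorem \ref{thm:dltblowup}(a). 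The non-canonical hypothesis produces a divisor $E$ over $X$ with $a(E,X,0)\in(-1,0)$, and any such $E$ must appear on the terminal model $Y$ as a divisor with coefficient ${-}a(E,X,0)>0$ in $\Gamma$, which gives $\Gamma\neq0$.

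The principal subtlety is exactly this case distinction: the right birational modification depends on whether $X$ fails to be klt or merely fails to be canonical while remaining klt. Beyond correctly invoking either Theorem \ref{thm:dltblowup}(a) or Theorem \ref{thm:dltblowup}(b), the argument is essentially formal, reducing to Theorem \ref{thm:alternative3}(ii) together with the descent of num-effectivity under birational morphisms onto $\Q$-factorial targets.
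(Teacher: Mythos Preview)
Your proposal is correct and follows essentially the same approach as the paper: produce a crepant birational model with non-zero boundary via either a dlt blowup or a $\Q$-factorial terminalisation, apply Theorem \ref{thm:alternative3}(ii), and descend using Lemma \ref{lem:descenteff}(a). The only cosmetic difference is that the paper splits directly on whether $(X,\Delta)$ is klt, without first separating off the case $\Delta\neq0$; your preliminary reduction to $\Delta=0$ is harmless but unnecessary, since in both of the paper's cases the non-canonical hypothesis alone forces the new boundary to be non-zero.
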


\begin{proof}
	Assume first that $(X,\Delta)$ is not klt. By Theorem \ref{thm:dltblowup}(b) there exists a dlt blowup $\pi\colon (Y,\Delta_Y)\to(X,\Delta)$ of $(X,\Delta)$. In particular, $(Y,\Delta_Y)$ is dlt, $\Delta_Y\neq0$ and ${-}(K_Y+\Delta_Y)$ is nef. Then $\pi^*L$ is num-effective by Theorem \ref{thm:alternative3}(ii), hence $L$ is num-effective by Lemma \ref{lem:descenteff}(a).
	
	Now assume that $(X,\Delta)$ is klt. By Theorem \ref{thm:dltblowup}(a) there exists a $\Q$-factorial terminalisation $\varphi\colon (Z,\Delta_Z)\to(X,\Delta)$ of $(X,\Delta)$. In particular, $(Z,\Delta_Z)$ is terminal, ${-}(K_Z+\Delta_Z)$ is nef and $\Delta_Z\neq0$ since $(X,\Delta)$ is not canonical. Hence, $\varphi^*L$ is num-effective by Theorem \ref{thm:alternative3}(ii), thus $L$ is num-effective by Lemma \ref{lem:descenteff}(a) again.
\end{proof}

\section{Proof of Theorem \ref{thm:mainthm_dim=3}}\label{sec:TheoremA}

In this section we prove our main result, Theorem \ref{thm:mainthm_dim=3}. As we will see below, the main part of the proof was already done in the previous sections. The following theorem deals with the remaining cases.

\begin{thm} \label{thm:nu2_var_RCchi}  
	Let $X$ be a projective variety with canonical singularities of dimension $3$ such that ${-}K_X$ is nef and $\chi(X,\OO_X)>0$. If $L$ is a nef Cartier divisor on $X$ such that $\nu(X,L) = 2$, then $h^0(X,L)>0$. 
\end{thm}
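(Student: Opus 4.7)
The plan is to combine Hirzebruch-Riemann-Roch with Kawamata-Viehweg vanishing in the spirit of Miyaoka's proof for minimal threefolds of numerical dimension $2$. First I would reduce to the case where $X$ is $\Q$-factorial terminal: Theorem~\ref{thm:dltblowup}(a) provides a $\Q$-factorial terminalisation $\sigma\colon X'\to X$ with $K_{X'}\sim_\Q\sigma^*K_X$; by rational singularities one has $\chi(X',\OO_{X'})=\chi(X,\OO_X)>0$ and $h^0(X',\sigma^*L)=h^0(X,L)$, while $-K_{X'}=\sigma^*(-K_X)$ remains nef and $\nu(X',\sigma^*L)=2$. The crucial gain is that terminal $3$-folds have isolated singularities.

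Next, taking a resolution $\pi\colon\widetilde X\to X$ and writing $K_{\widetilde X}=\pi^*K_X+E$ with $E$ effective and $\pi$-exceptional, I would compute $\chi(X,L)=\chi(\widetilde X,\pi^*L)$ via Hirzebruch-Riemann-Roch. Using $L^3=0$ (since $\nu(X,L)=2$), $(\pi^*L)^2\cdot E=L^2\cdot\pi_*E=0$, and, most importantly, $\pi^*L\cdot E^2=L\cdot\pi_*(E^2)=0$ (this uses that $E$ contracts to finitely many points, i.e.\ that $X$ has isolated singularities), the standard formula simplifies to
\[
\chi(X,L)=\chi(X,\OO_X)+\tfrac{1}{12}\bigl[3L^2\cdot(-K_X)+L\cdot K_X^2+\pi^*L\cdot c_2(\widetilde X)\bigr].
\]
Each bracketed term is non-negative: the first two as triple intersections of the nef divisors $L$ and $-K_X$, and the third by the pseudoeffectivity of $c_2$ of Theorem~\ref{thm:c2psef}, which is itself a consequence of Ou's result, Theorem~\ref{thm:Ou}. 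Combined with $\chi(X,\OO_X)>0$, this yields $\chi(X,L)>0$.

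For the vanishing, I would write $L\sim_\Q K_X+(L-K_X)$ and apply Lemma~\ref{lem:KVvanishing}. The divisor $L-K_X$ is nef as a sum of two nef divisors, and for any ample class $H$ on $X$ we have $(L-K_X)^2\cdot H\geq L^2\cdot H>0$ (since $L^2\not\equiv 0$ is pseudoeffective), so $\nu(X,L-K_X)\geq 2$. The lemma then yields $H^i(X,L)=0$ for all $i>3-2=1$, in particular $H^2(X,L)=H^3(X,L)=0$, whence
\[
h^0(X,L)=\chi(X,L)+h^1(X,L)\geq\chi(X,L)>0.
\]

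The main obstacle is the term $\pi^*L\cdot E^2$ in the Hirzebruch-Riemann-Roch expansion, which a priori may be negative and would spoil the sign analysis leading to $\chi(X,L)>0$. This is precisely what forces the preliminary reduction to terminal singularities: only there does $\pi_*(E^2)$ vanish, so that $\pi^*L\cdot E^2=0$ by the projection formula. The other essential ingredient is the pseudoeffectivity statement for $c_2$ of Theorem~\ref{thm:c2psef}, which serves in the anticanonical setting as the analogue of Miyaoka's classical result on the second Chern class of minimal varieties.
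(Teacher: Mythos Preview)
Your proof is correct and follows essentially the same approach as the paper: reduce to the $\Q$-factorial terminal case so that the singular locus is finite, compute $\chi(X,L)$ via Hirzebruch--Riemann--Roch on a resolution, use nefness together with Theorem~\ref{thm:c2psef} to bound the Chern-class terms, and kill $H^2$ and $H^3$ with Lemma~\ref{lem:KVvanishing}. The only cosmetic difference is that the paper justifies the equalities $(\pi^*L)^2\cdot K_Y=(\pi^*L)^2\cdot\pi^*K_X$ and $(\pi^*L)\cdot K_Y^2=(\pi^*L)\cdot(\pi^*K_X)^2$ by writing $K_X$ and $L$ as differences of general ample divisors, whereas you make the same point via the projection formula and the vanishing of $\pi_*(E^2)$; your explicit identification of the troublesome term $\pi^*L\cdot E^2$ as the reason for passing to terminal singularities is a nice clarification.
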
 

As indicated in the introduction, the proof of this result is similar to that of Miyaoka \cite[\S8]{Miy87} for minimal threefolds of numerical dimension $2$. The idea is to combine the Hirzebruch-Riemann-Roch theorem, the pseudoeffectivity of the second Chern class and the general version of the Kawamata-Viehweg vanishing theorem.

\begin{proof}[Proof of Theorem \ref{thm:nu2_var_RCchi}]
By Theorem \ref{thm:dltblowup}(a) and by Lemma \ref{lem:descenteff}, we may assume that $X$ is $\Q$-factorial and terminal, and thus that its singular locus consists of finitely many points by \cite[Corollary 5.18]{KM98}. Let $\pi\colon Y \to X$ be a resolution which is an isomorphism over the smooth locus of $X$. Since $ \nu(Y, \pi^* L) = 2 $, we have $(\pi^*L)^3=0$, while by representing $K_X$ and $L$ as differences of ample divisors which are general in their $\Q$-linear systems, we also see that
\begin{equation}\label{eq:dot3}
	(\pi^*L)^2 \cdot K_Y = (\pi^*L)^2 \cdot \pi^*K_X \quad\text{and}\quad (\pi^*L) \cdot K_Y^2 = (\pi^*L) \cdot (\pi^*K_X)^2.
\end{equation}

Recall that the Todd class of $Y$ is
$$\operatorname{td}(Y)=1-\frac12K_Y+\frac{1}{12}\big(K_Y^2+c_2(Y)\big)+\chi(Y,\OO_Y),$$
and note that $\chi(Y,\OO_Y)=\chi(X,\OO_X)>0$ since $X$ has rational singularities. Then the Hirze\-bruch-Riemann-Roch theorem, together with \eqref{eq:dot3} and since $X$ has rational singularities, gives
\begin{align*}
\chi\big(X,&\OO_X(L)\big)= \chi\big(Y,\OO_Y(\pi^*(L))\big)\\
=& -\frac{1}{4} (\pi^*L)^2\cdot \pi^*K_X+\frac{1}{12} (\pi^*L) \cdot \big((\pi^*K_X)^2+c_2(Y)\big)+\chi(X,\OO_X).
\end{align*}
We have
$$ {-}(\pi^*L)^2\cdot \pi^*K_X\geq0\quad\text{and}\quad (\pi^*L) \cdot (\pi^*K_X)^2\geq0 $$
since $L$ and ${-}K_X$ are nef, and 
$$ (\pi^*L) \cdot c_2(Y)\geq0 $$
by Theorem \ref{thm:c2psef}. Therefore,
\begin{equation}\label{eq:03}
\chi\big(X,\OO_X(L)\big)>0.
\end{equation}
Since 
$$\nu(X,{-}K_X+L)\geq\nu(X,L)=2,$$
we have
\begin{equation}\label{eq:vanish3}
H^i\big(X,\OO_X(L)\big) = H^i\big(X,\OO_X(K_X+({-}K_X+L))\big)= 0\quad\text{ for }i\geq2 
\end{equation}
by Lemma \ref{lem:KVvanishing}. From \eqref{eq:03} and \eqref{eq:vanish3} we obtain
$$ h^0\big(X,\OO_X(L)\big)>0, $$ 
as desired.
\end{proof}

Finally, we have:

\begin{proof}[Proof of Theorem \ref{thm:mainthm_dim=3}]
	If $X$ is not uniruled, then $K_X+\Delta\sim_\Q0$ by Lemma \ref{lem:nefanticanimpliesuniruled}. Therefore, we may assume that $X$ is uniruled, and let $L$ be a nef $\Q$-Cartier divisor on $X$. We will show that $L$ is num-effective.
	
	By Theorem \ref{thm:dltblowup}(b) there exists a dlt blowup $\varphi\colon (Z,\Delta_Z)\to(X,\Delta)$ of $(X,\Delta)$. In particular, $(Z,\Delta_Z)$ is $\Q$-factorial and dlt, and ${-}(K_Z+\Delta_Z)$ is nef. Since it suffices to show that $\varphi^*L$ is num-effective by Lemma \ref{lem:descenteff}, by replacing $(X,\Delta)$ with $(Z,\Delta_Z)$ and $L$ with $\varphi^*L$, we may assume that $X$ is $\Q$-factorial.
	
	By Corollary \ref{cor:non-canonical} and by Theorem \ref{thm:GenNonvanSurfaces}(a) we may assume that the pair $(X,\Delta)$ is canonical. As in the previous paragraph, by Theorem \ref{thm:dltblowup}(a) we may also assume that $(X,\Delta)$ is terminal.
	
	If $\Delta\neq0$, then we conclude by Theorem \ref{thm:alternative3}(ii) and by Theorem \ref{thm:GenNonvanSurfaces}(a).
	
	If $\Delta=0$, then $X$ is a uniruled, $\Q$-factorial, terminal variety such that ${-}K_X$ is nef. In particular, $K_X\not\equiv0$ by Lemma \ref{lem:nefanticanimpliesuniruled}. If $\nu(X,{-}K_X)=3$, then ${-}K_X$ is big, and if $\nu(X,{-}K_X)=1$, then ${-}K_X$ is num-effective by Theorem \ref{thm:mainthm_numdim=1partB}. If $\nu(X,{-}K_X)=2$, then ${-}K_X$ is num-effective by Theorem \ref{thm:nu2_var_RCchi} together with \eqref{eq:557a}, and by Theorem \ref{thm:nu2_pair_uniruled}. Therefore, the divisor $L$ is num-effective by Theorem \ref{thm:alternative3}(i) and by Theorem \ref{thm:GenNonvanSurfaces}(a).
\end{proof}

\section{Proof of Theorem \ref{thm:GNCdim3}}\label{sec:GNV_dim3}

In this section we prove Theorem \ref{thm:GNCdim3}. The proof uses crucially Theorem \ref{thm:mainthm_dim=3} and one of the main results of \cite{LP20a}.

We start with the following proposition which is also of general interest. The result improves \cite[Proposition 4.1]{LP20a} considerably, most notably the assumptions. The proof is also shortened and streamlined by implementing recent progress on generalised pairs \cite{HaconLiu21,LT22b}.

\begin{prop}\label{prop:MMP}
	Assume the existence of minimal models for smooth varieties of dimension $n-1$.

	Let $(X,\Delta)$ be a projective, $\Q$-factorial, log canonical pair of dimension $n$, let $L$ be a nef Cartier divisor on $X$ and let $m>2n$ be a positive integer. Assume that $(X,\Delta)$ has a minimal model or that $K_X + \Delta$ is not pseudoeffective. Then there exists an $L$-trivial $(K_X+\Delta)$-MMP $\varphi\colon X \dashrightarrow Y$ such that either:
	\begin{enumerate}[\normalfont (a)]
		\item $K_Y+\varphi_*\Delta+m\varphi_*L$ is nef, or
		
		\item there exist a Mori fibre space $\theta\colon Y\to Z$ of $K_X+\Delta$ and a Cartier divisor $L_Z$ on $Z$ such that $\varphi_*L\sim\theta^*L_Z$.
			\end{enumerate}
\end{prop}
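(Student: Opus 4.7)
The plan rests on a key length-of-extremal-rays observation. Let $(X',\Delta')$ be any projective, $\Q$-factorial, log canonical pair of dimension $n$ and let $L'$ be a nef Cartier divisor on $X'$. Suppose $R$ is a $(K_{X'}+\Delta'+mL')$-negative extremal ray of $\overline{NE}(X')$. Since $L'$ is nef, $(K_{X'}+\Delta')\cdot R<0$, so by the Cone Theorem for the lc pair $(X',\Delta')$ the ray $R$ is spanned by a rational curve $C$ with $-(K_{X'}+\Delta')\cdot C\leq 2n$. Using $(K_{X'}+\Delta'+mL')\cdot C<0$ we obtain
$$ mL'\cdot C < -(K_{X'}+\Delta')\cdot C\leq 2n,$$
whence $L'\cdot C<2n/m<1$; since $L'\cdot C$ is a non-negative integer, it follows that $L'\cdot C=0$. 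Thus every $(K_{X'}+\Delta'+mL')$-negative extremal ray is automatically $L'$-trivial (and $(K_{X'}+\Delta')$-negative).

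If $K_X+\Delta+mL$ is already nef, we set $Y:=X$ and are in case (a). Otherwise, we pick an ample $\Q$-divisor $A$ such that $(X,\Delta+A)$ is lc and $K_X+\Delta+mL+A$ is nef, and run the MMP with scaling of $A$ for the $\Q$-factorial NQC lc g-pair $(X,\Delta+mL)$, using the cone and contraction theorems of \cite[Theorem 1.3]{HaconLiu21}. At each step the contracted extremal ray $R_i$ is $(K+\Delta+mL)$-negative, hence by the observation it is simultaneously $L$-trivial and $(K+\Delta)$-negative. The $L$-triviality, together with the standard Contraction Theorem, guarantees that $L$ descends to a nef Cartier divisor on each subsequent model (for divisorial contractions and through flips alike), so the resulting birational map $\varphi\colon X\dashrightarrow Y$ is an $L$-trivial $(K_X+\Delta)$-MMP, as required.

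The main obstacle is termination. Under the hypothesis that $(X,\Delta)$ admits a minimal model or that $K_X+\Delta$ is not pseudoeffective -- combined with the existence of minimal models in dimension $n-1$, which furnishes termination of flips in dimension $n$ -- this MMP with scaling for the generalised pair terminates, via the framework developed in \cite{HaconLiu21,LT22b}. The output $Y$ then falls into one of two cases. Either $K_Y+\varphi_*\Delta+m\varphi_*L$ is nef, yielding (a); or we arrive at a Mori fibre contraction $\theta\colon Y\to Z$ of a $(K_Y+\varphi_*\Delta+m\varphi_*L)$-negative extremal ray $R$, which by the key observation is $\varphi_*L$-trivial and $(K_Y+\varphi_*\Delta)$-negative. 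Hence $\theta$ is a Mori fibre space of $K_X+\Delta$, and by the Contraction Theorem applied to the $L$-trivial ray $R$ there exists a Cartier divisor $L_Z$ on $Z$ with $\varphi_*L\sim\theta^*L_Z$, giving (b).
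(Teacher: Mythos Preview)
Your proof is essentially the same as the paper's: both run a $(K_X+\Delta+mL)$-MMP with scaling of an ample divisor for the g-pair $(X,\Delta+mL)$, use the length bound on extremal rays to show each step is $L$-trivial (hence a step of a $(K_X+\Delta)$-MMP), and invoke \cite{LT22b} for termination. The paper presents the $L$-triviality argument after setting up the MMP rather than as a preliminary observation, but this is purely organisational.

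One imprecision worth tightening: the phrase ``existence of minimal models in dimension $n-1$, which furnishes termination of flips in dimension $n$'' overstates what is known. The paper is more careful here: from the hypothesis that $(X,\Delta)$ has a minimal model one deduces a weak Zariski decomposition for $K_X+\Delta$, hence (by \cite[Lemma 3.2]{LT22b}) for $K_X+\Delta+mL$; then \cite[Theorem 1.1]{LT22b} gives termination of the MMP with scaling under the lower-dimensional assumption. Your appeal to ``the framework of \cite{HaconLiu21,LT22b}'' lands in the right place, but the intermediate step through weak Zariski decompositions is what actually makes the citation work.
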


\begin{proof}
	Note first that if $(X,\Delta)$ has a minimal model, then it has a weak Zariski decomposition. It follows from \cite[Lemma 3.2]{LT22b} that the g-pair $(X,\Delta+mL)$ has a weak Zariski decomposition or that $K_X + \Delta + mL$ is not pseudoeffective, so by \cite[Theorem 1.1]{LT22b} there exists a $(K_X+\Delta+mL)$-MMP with scaling of an ample divisor $ A $ which terminates. It remains to show that this MMP is $L$-trivial; the last part of (b) then follows from the Cone theorem \cite{Amb03,Fuj11a}.
	
	It suffices to show that the MMP is $L$-trivial on the first step. To this end, let 
	$$\lambda:=\min\{t\geq0\mid K_X+\Delta+mL+tA\text{ is nef}\}$$
	be the nef threshold. We are done if $\lambda=0$, hence we may assume that $\lambda>0$. Then there exists a $(K_X+\Delta+mL)$-negative extremal ray $R$ such that 
	$$\big(K_X+\Delta+mL+\lambda A\big)\cdot R=0.$$ 
	In particular, we have $A\cdot R>0$ and
	\begin{equation}\label{eq:negative}
		(K_X+\Delta+mL)\cdot R<0,
	\end{equation}
	and therefore
	$$ (K_X+\Delta)\cdot R<0, $$
	since $L$ is nef. By \cite[Theorem 1]{Kaw91} there exists a curve $C$ whose class belongs to $R$ such that 
	$$(K_X+\Delta)\cdot C\geq-2n.$$ 
	If $L\cdot C>0$, then $L\cdot C\geq 1$ since $L$ is Cartier, hence $(K_X+\Delta+mL)\cdot R>0$, which contradicts \eqref{eq:negative}. Therefore, $L\cdot C=0$, which finishes the proof.
\end{proof}

We now have:

\begin{proof}[Proof of Theorem \ref{thm:GNCdim3}]
	By passing to a log resolution and by Lemma \ref{lem:descenteff}(a) we may assume that $X$ is smooth and $\Delta$ has simple normal crossings support.
	
	\medskip
	
	\emph{Step 1.}
	We first settle (b). If $\nu(X,K_X+\Delta+L)=3$, then $K_X+\Delta+L$ is big, whereas if $\nu(X,K_X+\Delta+L)=0$, then $K_X+\Delta+L$ is num-effective by \cite[Proposition V.2.7(8)]{Nak04}. 
	
	\medskip
	
	\emph{Step 2.}
	Now we treat (c). Since $X$ is not uniruled, the divisor $K_X$ is pseudoeffective by \cite[Corollary 0.3]{BDPP}. Moreover, the pair $\big(X,\Delta-\varepsilon\lfloor\Delta\rfloor\big)$ is klt for any rational number $\varepsilon\in(0,1)$.
	
	We first assume that $\nu\big(X,K_X+\Delta-\varepsilon\lfloor\Delta\rfloor\big)=0$ for all $\varepsilon\in (0,1)\cap\Q$, and we will derive a contradiction. To this end, for each $\varepsilon \in (0,1)\cap\Q$ there exists an effective $\Q$-divisor $D_\varepsilon$ on $ X $ such that $K_X+\Delta-\varepsilon\lfloor\Delta\rfloor\equiv D_\varepsilon$ by \cite[Proposition V.2.7(8)]{Nak04}. In particular, for any fixed $ \varepsilon \in (0,1)\cap\Q $, we deduce that  $D_{\varepsilon/2}\equiv D_\varepsilon+\frac{\varepsilon}{2}\lfloor\Delta\rfloor$, and hence $\nu\big(X,D_\varepsilon+\frac{\varepsilon}{2}\lfloor\Delta\rfloor\big)=0$. Since
	$$\textstyle \nu\big(X,D_\varepsilon+\frac{\varepsilon}{2}\lfloor\Delta\rfloor\big)=\nu\big(X,D_\varepsilon+\varepsilon\lfloor\Delta\rfloor\big)$$
	by Lemma \ref{lem:numdim} and since $K_X+\Delta\equiv D_\varepsilon+\varepsilon\lfloor\Delta\rfloor$, we obtain $\nu(X,K_X+\Delta)=0$, which contradicts our hypothesis.
	
	Thus, there exists a rational number $\varepsilon\in(0,1)$ such that $\nu\big(X,K_X+\Delta-\varepsilon\lfloor\Delta\rfloor\big)>0$, so $K_X+\Delta-\varepsilon\lfloor\Delta\rfloor+L$ is num-effective by \cite[Corollary D]{LP20a}, and thus $K_X+\Delta+L$ is num-effective.
	
	\medskip
	
	\emph{Step 3.}
	If $X$ is not uniruled, then $K_X$ is pseudoeffective by \cite[Corollary 0.3]{BDPP}. Then $\kappa(X,K_X+\Delta)\geq0$ by the Nonvanishing theorem for threefolds, which shows (a) if $X$ is not uniruled. 
	
	\medskip
	
	For the remainder of the proof we assume that $X$ is uniruled, and we settle (d) and complete the proof of (a).
	
	\medskip
	
	\emph{Step 4.}
	Since $X$ is uniruled, it possesses a free rational curve through a general point \cite[Section 4.2]{Deb01}, hence $K_X$ is not pseudoeffective. Set
	$$ \delta := \inf \big\{ t \in \R_{\geq 0} \mid K_X+t\Delta+L \text{ is pseudoeffective} \big\} \in [0,1] . $$
	If $\delta>0$, then $K_X+\delta\Delta+L$ is num-effective by Theorem \ref{thm:GNV_psef_threshold_lc}, hence $K_X+\Delta+L$ is num-effective.
	
	From now on we assume that $\delta=0$. Consider
	$$ \lambda := \inf \big\{ t \in \R_{\geq 0} \mid K_X+tL \text{ is pseudoeffective} \big\} \in [0,1] . $$
	Then $\lambda>0$ since $K_X$ is not pseudoeffective, and hence
	\begin{equation}\label{eq:67a}
	K_X+\lambda L\quad\text{is num-effective}	
	\end{equation}	 
	by Theorem \ref{thm:GNV_psef_threshold_lc}. This immediately completes the proof of (a). 
	
	From now on, we concentrate on (d), and therefore we may assume that $\nu(X,K_X+\Delta+L)=1$.

	\medskip
	
	\emph{Step 5.}
	If $\lambda=1$, then we are done. Therefore, we may assume that $\lambda\in(0,1)$. Note that 
	$$\nu(X,K_X+L)\leq\nu(X,K_X+\Delta+L)=1.$$
	If $\nu(X,K_X+L)=0$, then $K_X+L$ is num-effective by \cite[Proposition V.2.7(8)]{Nak04}, hence $K_X+\Delta+L$ is num-effective. Thus, we may assume that 
	\begin{equation}\label{eq:67b}
		\nu(X,K_X+L)=1.	
	\end{equation}

	Fix a positive integer $m>2n$. Since $K_X+\lambda L$ is pseudoeffective, we have
	\begin{equation}\label{eq:67c}
	\nu(X,K_X+mL)=1
	\end{equation}
	by \eqref{eq:67b} and by Lemma \ref{lem:numdim}, since both divisors $K_X+L$ and $K_X+mL$ are positive linear combinations of the pseudoeffective divisors $K_X+\lambda L$ and $L$. By Proposition \ref{prop:MMP} we may run an $L$-trivial $K_X$-MMP $\varphi\colon X \dashrightarrow Y$ such that $K_Y+mL_Y$ is nef, where $L_Y:=\varphi_*L$. By \eqref{eq:67a} there exists an effective divisor $G$ on $ Y $ such that 
	\begin{equation}\label{eq:67d}
	K_Y+\lambda L_Y\equiv G,	
	\end{equation}
	and hence
	$$K_Y+mL_Y\equiv G+(m-\lambda)L_Y.$$
	
	If $G\neq0$, then by \cite[Theorem 6.1]{LP18a} we infer that $K_Y+mL_Y$ is num-effective, since $\nu(Y,K_Y+mL_Y)=1$ by \eqref{eq:67c}. Therefore, the divisor
	$$\textstyle K_Y+L_Y=\frac{m-1}{m-\lambda}(K_Y+\lambda L_Y)+\frac{1-\lambda}{m-\lambda}(K_Y+mL_Y)$$
	is num-effective, hence $K_X+L$ is num-effective by Lemma \ref{lem:MMPnumeff}. This shows that $K_X+\Delta+L$ is num-effective.
	
	Finally, assume that $G=0$. Then ${-}K_Y$ is nef by \eqref{eq:67d}, hence it is num-effective by Theorem \ref{thm:mainthm_dim=3}. In particular, the divisor $K_Y+L_Y\equiv\frac{1-\lambda}{\lambda}(-K_Y)$ is num-effective. Thus, $K_X+L$ is num-effective by Lemma \ref{lem:MMPnumeff} and consequently $K_X+\Delta+L$ is num-effective. This finishes the proof of (d) and of the theorem.
\end{proof}

\section{Proof of Theorem \ref{thm:mainthm_dim=4}}\label{sec:dim4nu1}

In this section we prove Theorem \ref{thm:mainthm_dim=4}. Similarly as in Section \ref{sec:TheoremA}, the main part of the argument was already completed in the previous sections and the following result deals with the remaining cases. Our proof uses similar ideas to those of \cite[Theorems 4.1 and 4.2]{LP17}.

\begin{prop} \label{pro:nu3}  
	Let $X$ be a projective variety with canonical singularities of dimension $4$ such that ${-}K_X$ is nef and $\chi(X,\OO_X)>0$. If $L$ is a nef Cartier divisor on $X$ with $\nu(X,L) \geq 3$, then
	$$\kappa(X,L)\geq0\quad\text{or}\quad \kappa(X,K_X+\ell L)\geq0\text{ for some integer }\ell\geq2.$$
\end{prop}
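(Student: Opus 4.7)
The plan is to emulate the proof of Theorem \ref{thm:nu2_var_RCchi}: express $\chi(X,\ell L)$ and $\chi(X,K_X+\ell L)$ as polynomials in $\ell$ via Hirzebruch--Riemann--Roch on a resolution, then use Kawamata--Viehweg vanishing to convert positivity of these Euler characteristics into non-vanishing of $h^0$. First, we dispose of the trivial case $\nu(X,L)=4$, where $L$ is big and $\kappa(X,L)=4$. From here on assume $\nu(X,L)=3$, i.e.\ $L^4=0$ but $L^3\not\equiv 0$. We reduce to the case where $X$ is $\Q$-factorial and terminal by passing to a $\Q$-factorial terminalisation of $(X,0)$ via Theorem \ref{thm:dltblowup}(a): this preserves $K$ crepantly, keeps $-K$ nef and $\chi(\OO_X)$ unchanged (rational singularities), pulls $L$ back to a nef Cartier divisor of the same numerical dimension, and the relevant Kodaira dimensions are preserved because $\pi'_*\OO_{\tilde X}=\OO_X$ for the birational contraction $\pi'\colon\tilde X\to X$. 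Crucially, $\operatorname{Sing}(X)$ now has codimension at least $3$ by \cite[Corollary 5.18]{KM98}.

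Pick a resolution $\pi\colon Y\to X$ and write $K_Y=\pi^*K_X+E$ with $E\geq 0$ effective exceptional. By rationality of $X$, $Q(m):=\chi(Y,m\pi^*L)=\chi(X,mL)$; Grauert--Riemenschneider and $\pi_*\omega_Y=\omega_X$ give $P(m):=\chi(Y,K_Y+m\pi^*L)=\chi\bigl(X,\omega_X\otimes\OO_X(mL)\bigr)$, so $P(m)>0$ together with higher-cohomology vanishing forces $\kappa(X,K_X+mL)\geq 0$. By HRR on $Y$, and using $(\pi^*L)^4=L^4=0$,
\[
Q(m)=\chi(\OO_X)+\alpha m^3+\beta m^2+\gamma m,
\]
where
\[
\alpha=-\frac{K_X\cdot L^3}{12},\quad \beta=\frac{\pi^*L^2\cdot(K_Y^2+c_2(Y))}{24},\quad \gamma=-\frac{\pi^*L\cdot K_Y\cdot c_2(Y)}{24},
\]
and Serre duality on $Y$ gives $P(m)=Q(-m)$. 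Lemma \ref{lem:KVvanishing}, applied with $\nu(m\pi^*L)=3$, kills $H^i$ for $i\geq 2$ and $m\geq 1$ on both $m\pi^*L$ and $K_Y+m\pi^*L$.

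Since $-K_X$ is nef and $L^3$ is pseudoeffective, $\alpha\geq 0$. If $\alpha>0$ then $Q(m)\to+\infty$, and we conclude $\kappa(X,L)\geq 0$. In the remaining case $\alpha=0$,
\[
Q(m)+P(m)=2\chi(\OO_X)+2\beta m^2,
\]
so \emph{provided} $\beta\geq 0$, the specialisation $m=2$ yields $Q(2)+P(2)\geq 2\chi(\OO_X)>0$, whence at least one of $Q(2), P(2)$ is strictly positive, giving either $\kappa(X,L)\geq 0$ or the second alternative with $\ell=2$.

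The crux, and the main obstacle, is verifying $\beta\geq 0$; this is where the terminal reduction pays off. Expanding $K_Y^2=(\pi^*K_X)^2+2\pi^*K_X\cdot E+E^2$ and invoking the projection formula: $\pi^*L^2\cdot(\pi^*K_X)^2=L^2\cdot K_X^2=L^2\cdot({-}K_X)^2\geq 0$ by the Khovanskii--Teissier inequalities for the nef classes $L$ and $-K_X$, while $\pi^*L^2\cdot\pi^*K_X\cdot E=(L^2\cdot K_X)\cdot\pi_*E=0$. For the delicate exceptional term $\pi^*L^2\cdot E^2$, the key point is that $E^2$ is a $2$-cycle on $Y$ supported on $\Exc(\pi)\subseteq\pi^{-1}(\operatorname{Sing}X)$, whose image in $X$ has dimension at most $1$; hence $\pi_*(E^2)=0$ in $A_2(X)$, and the projection formula gives $\pi^*L^2\cdot E^2=L^2\cdot\pi_*(E^2)=0$. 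Finally, $\pi^*L^2\cdot c_2(Y)\geq 0$ by Theorem \ref{thm:c2psef}. Putting these together yields $\beta\geq 0$, completing the argument. The main obstacle is precisely the exceptional contribution $\pi^*L^2\cdot E^2$, which is what necessitates the preliminary reduction to the $\Q$-factorial terminal setting, where the small singular locus forces this correction term to vanish.
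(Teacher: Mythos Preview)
Your proof is correct and follows the paper's strategy: reduce to the $\Q$-factorial terminal case, compute $\chi(X,mL)$ via Hirzebruch--Riemann--Roch on a resolution, establish that the cubic and quadratic coefficients $\alpha,\beta$ are non-negative (the latter via Theorem~\ref{thm:c2psef} and the codimension-$3$ singular locus), and conclude with Kawamata--Viehweg vanishing. Your justification of $(\pi^*L)^2\cdot K_Y^2 = L^2\cdot K_X^2$ via the projection formula and $\dim\pi(\Exc\pi)\leq 1$ is equivalent to the paper's ``general ample divisors'' argument for the same identity. The one genuine difference is in the endgame: the paper splits into cases according to which of the three inequalities is strict and then on the sign of the linear coefficient $\gamma$, whereas you exploit the Serre-duality symmetry $P(m)=Q(-m)$ to write $Q(m)+P(m)=2\chi(\OO_X)+2\beta m^2>0$ directly at $m=2$, which is slightly cleaner and even pins down $\ell=2$. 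One small wording issue: the vanishing $H^i=0$ for $i\geq 2$ on $m\pi^*L$ is not a direct application of Lemma~\ref{lem:KVvanishing} on $Y$, since $m\pi^*L-K_Y$ is not nef; it should be argued on $X$ (writing $mL=K_X+(mL-K_X)$ with $(X,0)$ klt and $mL-K_X$ nef of numerical dimension $\geq 3$), which is harmless because you already have $Q(m)=\chi(X,mL)$.
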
 

\begin{proof}
By Theorem \ref{thm:dltblowup}(a) and Lemma \ref{lem:descenteff} we may assume that $X$ is $\Q$-factorial and terminal, and thus that its singular locus has dimension at most $1$ by \cite[Corollary 5.18]{KM98}. Let $\pi\colon Y \to X$ be a resolution which is an isomorphism over the smooth locus of $X$. Since $ \nu(Y, \pi^* L) = 3 $, we have $(\pi^*L)^4=0$, while by representing $K_X$ and $L$ as differences of ample divisors which are general in their $\Q$-linear systems, we also have
\begin{equation}\label{eq:dot}
	(\pi^*L)^3 \cdot K_Y = (\pi^*L)^3 \cdot \pi^*K_X \quad\text{and}\quad (\pi^*L)^2 \cdot K_Y^2 = (\pi^*L)^2 \cdot (\pi^*K_X)^2.
\end{equation}

Recall that the Todd class of $Y$ is
$$\operatorname{td}(Y)=1-\frac12K_Y+\frac{1}{12}\big(K_Y^2+c_2(Y)\big)-\frac{1}{24}K_Y\cdot c_2(Y)+\chi(Y,\OO_Y),$$
and note that $\chi(Y,\OO_Y)=\chi(X,\OO_X)\geq0$. Then the Hirze\-bruch-Riemann-Roch theorem, together with \eqref{eq:dot} and the fact that $X$ has rational singularities, gives for any integer $m$:
\begin{align}
\chi\big(X,&\OO_X(mL)\big)= \chi\big(Y,\OO_Y(m\pi^*L)\big)\label{eq:RR2}\\
=& -\frac{1}{12} m^3 (\pi^*L)^3\cdot \pi^*K_X+\frac{1}{24} m^2 (\pi^*L)^2 \cdot \big((\pi^*K_X)^2+c_2(Y)\big)\notag \\
&-\frac{1}{24} m (\pi^*L) \cdot K_Y \cdot c_2(Y) + \chi(X,\OO_X).\notag
\end{align}
We have
\begin{equation}\label{eq:1}
{-}(\pi^*L)^3\cdot \pi^*K_X\geq0\quad\text{and}\quad (\pi^*L)^2 \cdot (\pi^*K_X)^2\geq0
\end{equation}
since $L$ and ${-}K_X$ are nef, and 
\begin{equation}\label{eq:2}
(\pi^*L)^2 \cdot c_2(Y)\geq0
\end{equation}
by Theorem \ref{thm:c2psef}.

\medskip

Assume first that any of the inequalities in \eqref{eq:1} and \eqref{eq:2} is strict. Then
\begin{equation}\label{eq:0}
\chi\big(X,\OO_X(mL)\big)>0\quad \text{for }m\gg0.
\end{equation}
Since 
$$\nu(X,{-}K_X+mL)\geq\nu(X,mL)\geq3,$$
by Lemma \ref{lem:KVvanishing} we obtain
\begin{equation}\label{eq:vanish}
H^i\big(X,\OO_X(mL)\big) = H^i\big(X,\OO_X(K_X+({-}K_X+mL))\big)= 0\quad\text{ for }i\geq2.
\end{equation}
From \eqref{eq:0} and \eqref{eq:vanish} we deduce that
$$ h^0\big(X,\OO_X(mL)\big)>0\quad\text{for }m\gg0. $$ 

Therefore, we may assume that all inequalities in \eqref{eq:1} and \eqref{eq:2} are equalities. Then \eqref{eq:RR2} gives
$$ \chi\big(X,\OO_X(mL)\big)= \chi\big(Y,\OO_Y(m\pi^*L)\big)= -\frac{1}{24} m (\pi^*L) \cdot K_Y \cdot c_2(Y) + \chi(X,\OO_X). $$
If ${-}(\pi^*L) \cdot K_Y \cdot c_2(Y)\geq0$, then $\chi\big(X,\OO_X(mL)\big)\geq 1$ for any non-negative integer $ m $, and we conclude that $\kappa(X,L)\geq0$ as in the previous paragraph. 

We may thus assume that ${-}(\pi^*L) \cdot K_Y \cdot c_2(Y)<0$, so that
$$ \chi\big(Y,\OO_Y({-}m\pi^*L)\big)>0\quad \text{for }m\gg0, $$
hence by Serre duality,
\begin{equation}\label{eq:0a}
\chi\big(Y,\OO_Y(K_Y+m\pi^*L)\big)>0\quad \text{for }m\gg0.
\end{equation}
Since by Lemma \ref{lem:KVvanishing} we have
$$ H^i\big(X,\OO_Y(K_Y+m\pi^*L)\big) = 0\quad\text{ for }i\geq2, $$
from \eqref{eq:0a} we obtain
$$ h^0\big(Y,\OO_Y(K_Y + m\pi^*L)\big) >0 \quad \text{for }m\gg0. $$
Therefore, there exists an integer $\ell\geq2$ and an effective divisor $D$ such that $K_Y + \ell\pi^*L\sim D$. Pushing this relation to $X$ we obtain $K_X+\ell L\sim \pi_*D$, hence $\kappa(X,K_X+\ell L)\geq0$, as desired.
\end{proof} 

We are now ready to finish the proof of Theorem \ref{thm:mainthm_dim=4}.

\begin{proof}[Proof of Theorem \ref{thm:mainthm_dim=4}]
If $\nu(X,{-}K_X)=0$, then $K_X$ is torsion by \cite[Theorem 8.2]{Kaw85b}. Therefore, we may assume that $\nu(X,{-}K_X)>0$, so that the variety $ X $ is uniruled by Lemma \ref{lem:nefanticanimpliesuniruled}. If $\nu(X,{-}K_X)=4$, then ${-}K_X$ is big. If $\nu(X,{-}K_X)=1$, then ${-}K_X$ is num-effective by Theorem \ref{thm:mainthm_numdim=1partB}.

Now assume that $\nu(X,{-}K_X)=3$. If $X$ is rationally connected, then by Proposition \ref{pro:nu3} and by \eqref{eq:557a} we have $\kappa(X,{-}K_X)\geq0$ or there exists an integer $\ell\geq2$ such that
$$\kappa\big(X,(\ell-1)({-}K_X)\big)=\kappa\big(X,K_X+\ell({-}K_X)\big)\geq0.$$
If $X$ is not rationally connected, then we conclude by Theorem \ref{thm:nu2_pair_uniruled}.

Finally, if $\nu(X,{-}K_X)=2$ and $X$ is not rationally connected, 
then ${-}K_X$ is num-effective by the same proof as that of Theorem \ref{thm:mainthm_numdim=1partB}, where we only replace Theorem \ref{thm:mainthm_numdim=1partA} by Theorem \ref{thm:mainthm_dim=3}.
\end{proof}		
	
	\bibliographystyle{amsalpha}
	\bibliography{biblio}
	
\end{document}